\def\br#1\er{{\color{red}#1}} %
\def\bb#1\eb{{#1}} 
\def\bp#1\ep{{#1}} 
\newcommand{\R}{\mathds R}
\newcommand{\N}{\mathds N}
\newcommand{\cambios}{}
\newcommand{\som}{SOM}
\title[Stationary-Complete Spacetimes and pre-Randers metrics]{Stationary-Complete Spacetimes with non-standard splittings and pre-Randers metrics}
\author[J. Herrera]{J\'onatan Herrera}
\address{Departamento de Matemáticas, Edificio Albert Einstein\hfill\break\indent Universidad de Córdoba, Campus de Rabanales,\hfill\break\indent 14071 Córdoba, Spain}
\email{jherrera@uco.es}
\author[M. A. Javaloyes]{Miguel Angel Javaloyes}
\address{Departamento de Matem\'aticas, \hfill\break\indent
Universidad de Murcia, Campus de Espinardo,\hfill\break\indent
30100 Espinardo, Murcia, Spain}
\email{majava@um.es}
\date{}
 \thanks{The first author is partially supported by the Spanish Grant MTM2016-78807-C2-2-P (MINECO and FEDER funds). The second author was partially supported  by Spanish  MINECO/FEDER project reference
 	MTM2015-65430-P and Fundaci\'on S\'eneca
 	(Regi\'on de Murcia) project 19901/GERM/15. }
\begin{document}
\newtheorem{thm}{Theorem}[section]
\newtheorem{prop}[thm]{Proposition}
\newtheorem{lemma}[thm]{Lemma}
\newtheorem{cor}[thm]{Corollary}
\newtheorem{conv}[thm]{Convention}
\theoremstyle{definition}
\newtheorem{defi}[thm]{Definition}
\newtheorem{notation}[thm]{Notation}
\newtheorem{exe}[thm]{Example}
\newtheorem{conj}[thm]{Conjecture}
\newtheorem{prob}[thm]{Problem}
\newtheorem{rem}[thm]{Remark}
\maketitle

\begin{abstract}
Using the relativistic Fermat's principle, we establish a bridge between stationary-complete manifolds which satisfy the observer-manifold condition and pre-Randers metrics, namely, Randers metrics without any restriction on the one-form. As a consequence, we give a description of the causal ladder of such spacetimes in terms of the elements associated with the pre-Randers metric: its geodesics and the associated distance. We obtain, as applications of this interplay, the description of conformal maps of Killing submersions, and existence and multiplicity results for geodesics of pre-Randers metrics and magnetic geodesics.
\end{abstract}

\tableofcontents

\section{Introduction}
 The main goal of this paper is to establish a bridge between stationay-complete spacetimes satisfying the observer manifold condition and pre-Randers metrics, which are metrics with the same expression as Randers metrics, but without any restriction on the one-form $\omega$ (see \eqref{preRanders}).  Let us recall, on the one hand, that a spacetime is stationary-complete if it admits a complete timelike Killing vector field. If, additionally, the  spacetime satisfies the observer manifold condition, then there exists a splitting $\R\times S$ analogous to the one for standard stationary spacetimes, but with the slices not necessarily spacelike. The class of the spacetimes of this type on the product manifold $\R\times S$ is denoted by $\som(\R\times S)$ and we will refer to a spacetime of this type as an \som-spacetime 
(see Definition \ref{def:classspacetimes}). 

On the other hand, pre-Randers metrics are not necessarily positive, so they are not pseudo-Finsler metrics. In fact, its fundamental tensor is degenerate in the directions where the metric is zero. In any case, a pre-Randers metric is pre-Finsler (see Definition \ref{preFinsler}) and then we can define both, the length of a curve and an associated distance. It turns out that the associated distance is continuous (see Definition \ref{def:distance} and Proposition \ref{prop:continuidad}). The interest of pre-Randers metrics arises naturally from its applications, among which we can cite the study of magnetic geodesics (see \S \ref{magnetic} and especially Proposition \ref{magranders}). A fundamental element of pre-Randers metrics is the concept of geodesic, which cannot always be defined using the Chern connection because  the fundamental tensor can be degenerate. For this reason, we define geodesics as affinely parametrized critical points of the length functional (see Definition \ref{def:distance}). 

The key result that relates \som-spacetimes $(\R\times S,g)$ with pre-Randers metrics is Proposition \ref{prop:Fermatmetric}, which is a consequence of the relativistic Fermat's principle  \cite{Perlick90}. It says that lightlike geodesics of $(\R\times S,g)$ project into pre-geodesics of a pre-Randers metric  $F$ in $S$ given by \eqref{eq:wsfinsler}, called the {\it Fermat metric} of $(\R\times S,g)$. Moreover, one can determine if two points are chronologically related using the distance associated to the Fermat metric $d_F$ (see Proposition \ref{teo:caracrel}). Summing up, as causality is determined by the lightlike pre-geodesics and these can be controlled by the geodesics of the Fermat metric \eqref{eq:wsfinsler}, all the causal ladder of $(\R\times S,g)$ can be described in terms of the geodesics and the associated distance of the Fermat metric.  This is done in \S \ref{s:ladder}, particularly, in Propositions \ref{prop:escalera1}, \ref{prop:esc2} and \ref{prop:disting}, Corollary \ref{prop:caucon} and Theorem~\ref{teo:CSYGH}. This characterization is especially fashionable in terms of the symmetrized distance of $d_F$, denoted by $d_s$. In particular, as commented in Remark \ref{dscausality}, an \som-spacetime is {\it totally vicious} if and only if $d_s\equiv-\infty$, {\it chronological} if and only if $d_s\geq 0$ and both possibilities are the only ones for $d_s$. Moreover, it is {\it distinguishing} if and only if $d_s$ is strictly positive away from the diagonal, and {\it globally hyperbolic} if, in addition, the balls of $d_s$ are pre-compact. The causal ladder is completed by observing that if the \som-spacetime is {\it (past or future)-distinguishing}, then it is causally continuous, and it is {\it causally simple} if and only if $(S,F)$ is convex. A different approach to describe the causality of such spacetimes can be found in \cite{Harris15}. In the Appendix, we explain how both approaches are related.

We will exploit these results around four applications. The first one consists of studying the conformal maps of Riemannian Killing submersions of the form $\pi:(\R\times S,g_R)\rightarrow (S,h)$, namely, the conformal maps of the total space that preserve the Killing vector field. It turns out that there exists an \som-spacetime $(\R\times S,g_L)$ with the same conformal maps which preserve the Killing field $K=\partial_t$ as the Killing submersion. Moreover, these conformal maps can be described generalizing \cite[Theorem 4.3]{JLP15} in terms of almost isometries of the Fermat metric, which are the maps that preserve the metric up to the addition of the differential of a function (see Definition \ref{almostisometric} and \S \ref{almoconf}). The second one provides some results of geodesic connectedness  and existence and multiplicity of periodic geodesics of pre-Randers metrics. More precisely, we use the characterization of global hyperbolicity in terms of pre-Randers metrics to deduce that, in such a case, there exists a complete Randers metric with the same pre-geodesics of the given pre-Randers metric. Then we can use the well-known results of connectedness and existence of periodic geodesics for Finsler metrics. In particular, we generalize the celebrated Gromoll-Meyer Theorem and another result of multiplicity of periodic geodesics by Bangert and Hingston (see Theorem \ref{thm:mult}). Beyond compactness, we need and additional condition for pre-Randers metrics to state these results: the length of all the loops with a given basepoint $x\in S$ has to be non-negative. For the result of connectedness, we also need pre-compact symmetrized balls. The third application involves the relation between the structure of \textit{future horizons} in the \som-spacetimes and the so-called \textit{cut locus} $Cut_C$ in $(S,F)$, both associated with a set $C$. Concretely, and by recalling  results by Beem and Krolak \cite{BK98}, and Chrusciel and Galloway \cite{CFGH02} for spacetimes; we obtain a characterization for the uniqueness of C-minimizing segments (see definition in \S \ref{sec:cutlocus}) and the differentiability of the function distance to $C$  associated with $F$. As a consequence, we will be able to prove that the measure of the cut locus $Cut_C$ of $(S,F)$ is zero, where by cut locus $Cut_C$ we mean the set of points where C-minimizing segments do not minimize anymore (extending the Riemannian results in \cite{CFGH02}). The last application is a consequence of the second one. This is because magnetic geodesics with energy $c>0$ are pre-geodesics of a pre-Randers metric (see Proposition \ref{magranders}) and this allows us to obtain results of connectedness and multiplicity for every energy level $c>0$ whose associated pre-Randers metric satisfies the completeness conditions.

The paper is organized as follows. In \S \ref{preliminaries}, we give some basic results about pre-Finsler metrics, showing that its associated pre-distance (which is not necessarily non-negative) is continuous, Proposition \ref{prop:continuidad}. In \S \ref{s:stat}, we introduce the class of \som-spacetimes and its associated Fermat metric, explaining the effect of considering different splittings in \S \ref{sec:almostisometriessplit}, characterizing its causal ladder in \S \ref{s:ladder} and describing its conformal maps in \S \ref{almoconf}. In \S \ref{s:confmapssub}, we characterize the conformal maps of Killing submersions using the almost isometries of a pre-Randers metric (see Corollary \ref{confmapsKillingsub}). In \S \ref{s:presom}, we exploit the applications of the causal ladder to the existence and multiplicity results of periodic geodesics and connectedness of pre-Randers metrics, which are used in \S \ref{magnetic} to get the same type of results for magnetic geodesics. In \S \ref{sec:cutlocus}, we study some results involving the differentiability of the distance function and the measure of the cut locus. Finally, in the Appendix, we describe the relation between the description of the causal ladder given  in \cite{Harris15} by S. Harris, and the results in \S \ref{s:ladder}.

\section{Preliminaries on pre-Finsler Metrics}\label{preliminaries}

Our aim in this first section is to introduce the concept of  pre-Finsler  metrics (see \cite[\S 9.1.2]{SRK14}),  a generalization of the classical notion of Finsler metrics where we remove the property of being positive for all non-zero vector  and the requirements on the fundamental tensor. 

\begin{defi}\label{preFinsler}
Let $S$ be an $(n-1)$-dimensional manifold and $TS$ its tangent bundle.  A function $F:TS\rightarrow \R$ will be called a  {\em pre-Finsler metric}   if:
	
	\begin{itemize}
		\item[(i)] $F:TS\setminus {\bf 0}\rightarrow \R$ is smooth, where $\bf 0$ is the zero section.
		
		\item[(ii)] $F$ is positive homogeneous, that is, $F(\lambda\,v)=\lambda\,F(v)$ for all $v\in TS$ and $\lambda>0$.
		
	\end{itemize}
	 In particular, the last property implies that $F(0)=0$. The pair $(S,F)$ will be called a {\em pre-Finsler} manifold.
	
\end{defi}
%
%
	
Even if, given a point $p\in S$, $F_p$ is not a Minkowski norm in general, we can proceed in complete analogy with the standard Finsler case and re-obtain some of its classical constructions. For instance, given a piecewise smooth curve $c:[t_0,t_1]\rightarrow S$, we can define the {\em length} of $c$ as
\begin{equation}\label{length}
\ell_{F}(c):=\int_{t_0}^{t_1} F(\dot{c}(s))ds. 
\end{equation}
The main difference with the standard case is that the length of the curve could be negative, but we can go on  with the analogy in order to define some kind of distance between points associated with the pre-Finsler metric (compare with \cite{JaSa14}). In fact, 

\begin{defi}\label{def:distance}
	Let $(S,F)$ be a pre-Finsler metric.  We say that a curve $\gamma:[a,b]\rightarrow S$ is a pregeodesic of $F$ if it is a critical point of the length \eqref{length}  in the space $C(x_0,x_1)$,   where $C(x_0,x_1)$  denotes  the set of piecewise smooth curves between $x_0$ and $x_1$, being $x_0=\gamma(a)$ and $x_1=\gamma(b)$. If, in addition, $\gamma$ is  affinely parametrized with the length, we will say that it is a geodesic. Moreover,  we define the associated  pre-distance  as a map $d_{F}:S\times S\rightarrow \R\cup \{-\infty\}$ given by
	
	\[d_F(x_0,x_1)={\rm inf}_{c\in C(x_0,x_1)} {\ell}_F(c).\]
	Observe that $d_F$ is not necessarily non-negative, which leads us to call it pre-distance, but it does satisfy the triangle inequality.
	
\end{defi}
\begin{rem}
 In principle, it is not guaranteed the existence and unicity of geodesics having a fixed vector as inicial velocity, namely, the exponencial map is not necessarily well-defined. Nevertheless, this will not be a problem when the pre-Finsler metric is given by \eqref{eq:wsfinsler}.  
\end{rem}

 There are only two properties that hold for these metrics, the triangle inequality (which follows straightforwardly) and the continuity of $d_{F}$:
%
%
%

\begin{prop}\label{prop:continuidad}
	Let $d_F$ be the associated pre-distance of a pre-Finsler metric $F$ defined on a connected manifold $S$. Then
\begin{enumerate}[(i)]
\item $d_F$ satisfies the triangle inequality (with the usual convention about sums with $-\infty$), 
\item if $\gamma:[a,b]\rightarrow S$ is a piecewise smooth curve satisfying that $d_F(\gamma(a),\gamma(b))=\ell_F(\gamma)$, then it is a pregeodesic,
\item if $d_F(x,x)<0$ for some $x\in S$, then $d_F\equiv -\infty$,
\item if $d_F(x,y)=-\infty$ for some $x,y\in S$, then $d_F\equiv -\infty$,	
\item if $d_F\not=-\infty$, then the function $d_F:S\times S\rightarrow \R$ is continuous.
\end{enumerate}
\end{prop}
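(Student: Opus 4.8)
The plan is to prove the five assertions roughly in the order stated, since the later ones lean on the earlier ones, and the continuity statement (v) is where the real work lies.

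The five items are best proved in roughly the stated order, since each later one builds on the earlier ones, and the continuity claim (v) carries the bulk of the work. For (i), given three points and curves realizing lengths close to the respective infima, I would concatenate a curve from $x$ to $y$ with one from $y$ to $z$: lengths add under concatenation, and passing to the infimum yields $d_F(x,z)\le d_F(x,y)+d_F(y,z)$, with the convention $a+(-\infty)=-\infty$ absorbing the degenerate cases. For (ii), if $\gamma$ realizes the infimum then it is a global minimum of $\ell_F$ on $C(x_0,x_1)$; hence for any fixed-endpoint variation $\gamma_s$ the function $s\mapsto\ell_F(\gamma_s)$ attains a minimum at $s=0$, so its derivative vanishes there, which is exactly the statement that $\gamma$ is a critical point, i.e. a pregeodesic.

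Items (iii) and (iv) I would treat as a short chain powered by (i). If $d_F(x,x)<0$, pick a loop $c$ based at $x$ with $\ell_F(c)<0$; traversing $c$ repeatedly $n$ times produces loops of length $n\,\ell_F(c)\to-\infty$, so $d_F(x,x)=-\infty$. Since $S$ is connected, $d_F(y,x)$ and $d_F(x,z)$ are never $+\infty$, so the triangle inequality $d_F(y,z)\le d_F(y,x)+d_F(x,x)+d_F(x,z)$ forces $d_F\equiv-\infty$, proving (iii). For (iv), if $d_F(x,y)=-\infty$ then $d_F(x,x)\le d_F(x,y)+d_F(y,x)=-\infty$ (again using connectedness to keep $d_F(y,x)<+\infty$), so $d_F(x,x)<0$ and (iii) applies.

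The heart is (v). First, (iv) tells me that if $d_F\not\equiv-\infty$ then $d_F$ never attains $-\infty$, so it is genuinely $\R$-valued; moreover the contrapositive of (iii), together with the trivial bound from the constant curve, gives $d_F(x,x)=0$ for every $x$. The key analytic input is a local Lipschitz estimate. Fixing $x_0$, I would choose a coordinate ball $B$ around $x_0$ and use that, by positive homogeneity and compactness of the Euclidean unit sphere bundle over $\overline{B}$, there is a constant $M$ with $|F(q,v)|\le M\,|v|$ for all $q\in\overline{B}$ and all $v$. Estimating the $F$-length of the coordinate segment from $x$ to $x_0$ then gives $d_F(x,x_0)\le M|x-x_0|$ and, symmetrically, $d_F(x_0,x)\le M|x-x_0|$. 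Because $F$ is only positively homogeneous (so $d_F$ is asymmetric), I must keep both directions; combining the two upper bounds with $0=d_F(x_0,x_0)\le d_F(x_0,x)+d_F(x,x_0)$ yields $|d_F(x,x_0)|\le M|x-x_0|$ and likewise $|d_F(x_0,x)|\le M|x-x_0|$. Finally, applying the triangle inequality in both directions bounds $|d_F(x,y)-d_F(x_0,y_0)|$ by $\max\{d_F(x,x_0)+d_F(y_0,y),\,d_F(x_0,x)+d_F(y,y_0)\}$; since each of the four terms tends to $0$ as $(x,y)\to(x_0,y_0)$ by the estimates above, continuity follows.

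The main obstacle I anticipate is the bookkeeping forced by the asymmetry of $F$: unlike a genuine distance, one cannot pass freely between $d_F(x,x_0)$ and $d_F(x_0,x)$, so the local estimate must be run in both directions and then symmetrized through the identity $d_F(x_0,x_0)=0$. The possibility that $F$ is negative is what makes this necessary, and it also dictates that the local control be phrased as the two-sided bound $|F|\le M\,|\cdot|$ rather than a one-sided inequality.
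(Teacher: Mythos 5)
Your proposal is correct and rests on the same ingredients as the paper's proof: the iterated negative loop for (iii), the triangle inequality through a point with $d_F(x,x)=-\infty$ for (iii)--(iv), and, for (v), the local two-sided comparison $|F(v)|\le M|v|$ against a Riemannian (or coordinate) norm combined with the triangle inequality. The only difference is presentational: you package (v) as a direct local Lipschitz estimate on $d_F(\cdot,x_0)$ and $d_F(x_0,\cdot)$, while the paper argues by contradiction with sequences of near-minimizing curves concatenated with short connecting arcs --- the underlying mechanism is identical, and your direct version is, if anything, slightly cleaner.
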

\begin{proof}
 Parts $(i)$ and $(ii)$ follow straightforwardly. 
 For $(iii)$, first of all, observe that $d_{F}(x,x)\in\{-\infty,0\}$.  In fact, it is clear from the ``constant'' curve $x$ that $d_{F}(x,x)\leq 0$. Moreover, if $d_{F}(x,x)<0$, then there exists a closed curve $\gamma$ with  basepoint  $x$ and such that $\ell_{F}(\gamma)<0$. In particular, the concatenation of the curve $\gamma$ with itself gives us another closed curve with the same base point and length $2\,\ell_{F}(\gamma)$. By iteration, we deduce that we can construct a curve with length as negative as desired, so $d_{F}(x,x)=-\infty$.  Finally, observe that if $d_{F}(x,x)=-\infty$ for some $x\in S$, then $d_{F}(y,y)=-\infty$ for all $y\in S$.   In fact, as $d_{F}(x,y)$ and $d_{F}(y,x)$ are bounded from above, the triangle inequality ensures that 
  \[
d_{F}(y,y)\leq d_{F}(y,x) + d_{F}(x,x) + d_{F}(x,y)
    \]   and necessarily $d_F(y,y)=-\infty$. For any two arbitrary points $x,y\in S$, we have 
     \[
d_{F}(x,y)\leq d_{F}(x,x) + d_{F}(x,y)
    \]  
    and as $d_F(x,x)=-\infty$, it follows that $d_F(x,y)=-\infty$.
      For  part $(iv)$, use again the triangle inequality $d_F(x,x)\leq d_F(x,x)+d_F(x,y)$ to prove that $d_F(x,x)=-\infty$ and then apply part $(iii)$ to deduce that $d_F\equiv -\infty$.
Let us show $(v)$. 
	 If $d_{F}$ were not continuous, then we could find convergent sequences $x_n\rightarrow x$ and $y_n\rightarrow y$ such that there exists the limit $\lim_{n\rightarrow +\infty} d_{F}(x_n,y_n)\in \R\cup \{-\infty,+\infty\}$ and  $\lim_{n\rightarrow +\infty} d_{F}(x_n,y_n)\not=d_{F}(x,y)$. Observe that we can consider neighborhoods $U_x$ and $U_y$ in the manifold $S$ of $x$ and $y$, respectively, such that they admit a Riemannian metric $g$. Moreover,  there exists a constant $c$ such that $-c|v|\leq F(v) \leq c|v|$ for every $v\in T_qS$ with $q\in U_x\cup U_y$, where $|\cdot|$ is the norm of the Riemannian metric $g$. This easily implies that there exist curves $\beta^1_n$, $\beta^2_n$ from $x_n$ to $x$ and from $x$ to $x_n$, respectively, and $\alpha^1_n$, $\alpha^2_n$ from $y_n$ to $y$ and from $y$ to $y_n$, respectively, such that $\lim_{n\rightarrow +\infty}\ell_{F}(\beta^i_n)=\lim_{n\rightarrow +\infty}\ell_{F}(\alpha^i_n)=0$ for $i=1,2$. Let $\gamma_n:[0,1]\rightarrow S$ be a sequence of curves with $\gamma_n(0)=x_n$ and $\gamma_n(1)=y_n$ and $\ell_{F}(\gamma_n)<d_{F}(x_n,y_n)+1/n$. Then the concatenation $\tilde{\gamma}_n=\alpha^1_n\star\gamma_n\star \beta^2_n$ is a curve from $x$ to $y$ such that $\lim_{n\rightarrow +\infty}\ell_{F}(\tilde{\gamma}_n)=\lim_{n\rightarrow +\infty}d_{F}(x_n,y_n)$. This proves that 
	\begin{equation}\label{firstineq}
	\lim_{n\rightarrow +\infty}d_{F}(x_n,y_n)\geq d_{F}(x,y).
	\end{equation}
	Now let $\rho_n:[0,1]\rightarrow S$ be a sequence of curves from $x$ to $y$ such that $\lim_{n\rightarrow +\infty}\ell_{F}(\rho_n)=d_{F}(x,y)$. The concatenation $\tilde{\rho}_n=\alpha^2_n\star\rho_n\star \beta^1_n$ gives a sequence of curves from $x_n$ to $y_n$ which satisfies that  $\lim_{n\rightarrow +\infty}\ell_{F}(\tilde{\rho}_n)=\lim_{n\rightarrow +\infty}d_{F}(x,y)$. This concludes that $\lim_{n\rightarrow +\infty}d_{F}(x_n,y_n)\leq d_{F}(x,y)$, which together with \eqref{firstineq}, gives a contradiction and it concludes the continuity of $d_{F}$.  
\end{proof}

As in (regular) Finsler theory, the map $d_{F}$ is not necessarily symmetric so it  makes   sense to define the associated {\em symmetrized} pre-distance defined as
\begin{equation}\label{symm}
d_s(x,y)=\frac{1}{2}(d_{F}(x,y)+d_{F}(y,x)).
\end{equation}
From construction, $d_s$ is symmetric, but it is not necessarily positive. 

  Finally we will also extend the concept of {\em almost isometry} (see \cite{JLP15}) for pre-Finsler metrics. 
  \begin{defi}\label{almostisometric}
  We will say that two metrics $F$ and $F'$ are almost isometric if there exists a function $f:S\rightarrow \R$ such that $F=F'+df$. 
  \end{defi}
  In terms of their associated pre-distances, if $F$ and $F'$ are almost isometric, it is not difficult to check that 
\begin{equation}\label{almostDF}
d_{F}(x_0,x_1)=d_{F'}(x_0,x_1)+f(x_1)-f(x_0),
\end{equation} 
for every $x_0,x_1\in S$ (observe that $\ell_{F-df}(c)=\ell_F(c)+f(x_0)-f(x_1)$ for every $c\in C(x_0,x_1)$). 

\section{Stationary-complete Manifolds admitting global splittings and its causality}\label{s:stat}

In this section we will show how the causality of stationary-complete spacetimes admitting a global splitting is characterized in terms of a pre-Finsler metric.

Let us consider $(M,g)$ a stationary-complete spacetime, that is, a Lorentz manifold admitting a globally defined complete timelike Killing vector field $K$  (with the time-orientation provided by the Killing vector field).  We will say that $(M,g)$ admits a  (non-canonical)  global splitting if $M=\R\times S$ for some $(n-1)$-dimensional manifold $S$ and

\begin{equation}\label{def:metrica}
g=-\beta  dt\otimes dt+dt\otimes\omega + \omega\otimes dt + g_{0},
\end{equation}
where $\beta:S\rightarrow (0,+\infty)$ is a positive function, $\omega$ is a one-form of $S$ and $g_{0}$ is a $(0,2)$-symmetric tensor defined over $S$. As we can see, the vector field $K$ is identified with $\partial_t$ and its flow intersects exactly once the hypersurface $S$. 

 Observe that if a stationary-complete spacetime is chronological, then it admits a global splitting (see \cite{Harris92}). 
 \begin{prop}
 A $(0,2)$-tensor in a manifold $M=\R\times S$ as in \eqref{def:metrica} is a Lorentz metric if and only if
\begin{equation}\label{eq:condLor} 
g_{0}(v,v)+\frac{1}{\beta}\omega(v)^2>0
\end{equation} 
for every $v\in TS\setminus \bf 0$, namely,
  $g_0+\frac{1}{\beta}\omega\otimes\omega$ is a  metric on $S$.
 \end{prop}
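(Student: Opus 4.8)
The plan is to reduce the statement to a pointwise question about the signature of a symmetric bilinear form and to settle it by completing the square, so that Sylvester's law of inertia applies directly. Recall that a symmetric $(0,2)$-tensor is a Lorentz metric precisely when, at each point $p\in M$, it restricts to a non-degenerate symmetric bilinear form of signature $(-,+,\dots,+)$ on $T_pM$, and that this signature is a congruence invariant (it does not depend on the chosen basis). Since $S$ has dimension $n-1$, the space $T_pM$ has dimension $n$ and splits as $\R\,\partial_t\oplus T_pS$.

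First I would record the elementary identities coming from \eqref{def:metrica}: for $v,w\in T_pS$ one has $g(\partial_t,\partial_t)=-\beta$, $g(\partial_t,v)=\omega(v)$ and $g(v,w)=g_0(v,w)$, since $\omega$ and $g_0$ act only on $TS$ and $dt$ annihilates $TS$. Writing an arbitrary vector as $X=a\,\partial_t+v$ with $a\in\R$ and $v\in T_pS$, these identities give
\[
g(X,X)=-\beta a^2+2a\,\omega(v)+g_0(v,v).
\]
Because $\beta>0$, I can complete the square in $a$ to obtain
\[
g(X,X)=-\beta\Big(a-\tfrac{\omega(v)}{\beta}\Big)^2+\Big(g_0(v,v)+\tfrac{1}{\beta}\omega(v)^2\Big),
\]
which already exhibits a negative square plus the quadratic form $h:=g_0+\frac{1}{\beta}\omega\otimes\omega$ on $T_pS$.

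Next I would turn this into an explicit congruence. Fixing a basis $\{e_i\}$ of $T_pS$ and setting $\tilde e_i:=e_i+\frac{\omega(e_i)}{\beta}\,\partial_t$ while keeping $\partial_t$, a direct computation with the identities above shows $g(\partial_t,\partial_t)=-\beta$, $g(\partial_t,\tilde e_i)=0$ and $g(\tilde e_i,\tilde e_j)=g_0(e_i,e_j)+\frac{1}{\beta}\omega(e_i)\omega(e_j)=h(e_i,e_j)$. Thus, in the basis $\{\partial_t,\tilde e_1,\dots,\tilde e_{n-1}\}$, the matrix of $g$ is block-diagonal, equal to $-\beta$ in the first slot and to the matrix of $h$ on $T_pS$. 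The transition matrix is unitriangular (its determinant is $1$), hence invertible, so this is a genuine change of basis and the signature of $g$ agrees with that of $\mathrm{diag}(-\beta,h)$.

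Finally I would read off the conclusion. Since $\beta>0$, the first block always contributes exactly one negative direction, so by Sylvester's law of inertia $g$ has Lorentzian signature $(-,+,\dots,+)$ if and only if the block $h$ is positive definite, i.e.\ $g_0(v,v)+\frac{1}{\beta}\omega(v)^2>0$ for every $v\in T_pS\setminus\mathbf 0$; non-degeneracy is then automatic, since $\beta>0$ and $h$ is positive definite. As $p$ is arbitrary, this is exactly \eqref{eq:condLor}, equivalently the statement that $h=g_0+\frac{1}{\beta}\omega\otimes\omega$ is a (Riemannian) metric on $S$. I expect the only delicate point to be the bookkeeping: verifying that the proposed basis change really block-diagonalizes $g$ and fixing the convention that ``Lorentz metric'' means one negative eigenvalue, after which Sylvester's law does all the work.
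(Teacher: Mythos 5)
Your proof is correct and takes essentially the same route as the paper: your vectors $\tilde e_i=e_i+\frac{\omega(e_i)}{\beta}\partial_t$ are exactly a basis of the $g$-orthogonal complement of $\partial_t$ used there, on which $g$ restricts to $g_0+\frac{1}{\beta}\omega\otimes\omega$. The only cosmetic difference is that you phrase the conclusion via an explicit congruence and Sylvester's law, whereas the paper directly invokes the fact that $g$ is Lorentzian iff the orthogonal complement of the timelike vector $\partial_t$ is positive definite.
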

 \begin{proof}
First observe that $g(\partial_t,\partial_t)=-\beta<0$, and then $g$ is Lorentz if and only if  its $g$-orthogonal  subspace is  $g$-positive definite. Let us take $p=(t,x)\in M$ and consider $T_p M\equiv\R\times T_x S$. Vectors on such a tangent space  are naturally identified with pairs $(\tau,v)\in\R\times T_x S$, having in particular that $\partial_t\equiv (1,0)$. It follows then that $(\tau,v)\in \{\partial_t\}^\perp$ if, and only if, $\tau=(1/\beta) \omega(v)$. Moreover, as
 $g\left((\frac{1}{\beta}\omega(v),v),(\frac{1}{\beta}\omega(v),v)\right) =g_{0}(v,v)+\frac{1}{\beta}\omega(v)^2,$
for every $v\in TS$, the equivalence follows.
\end{proof}
\smallskip

 If, in addition to the previous condition, $g_0$ is Riemannian, we obtain the well-known  class of {\em standard} stationary spacetimes. 
The above class can be characterized as those stationary-complete spacetimes which satisfy the observer manifold condition. Recall that a stationary complete spacetime $(M,g)$  satisfies the observer manifold condition with respect to the Killing field $K$ if for each point $x \in M$ there is a neighborhood $U_x$ of $x$ such that for each point $y\in M$ there is a neighborhood $W_{x,y}$ of $y$ such that for $|t|$ big enough, $\varphi^K_t(U_x) \cap W_{x,y} = \emptyset$. Herem $\varphi^K_t$ denotes the flow of the Killing field $K$. In particular, this condition implies that the Killing orbits are all lines and not circles. As it was observed after \cite[Definition 1.1]{Harris15}, recalling some observations by Palais \cite{Palais61}, the observer manifold condition implies that the space of stationary observers (Killing orbits) is a Hausdorff manifold. Then the metric of the spacetime can be expressed as in \eqref{def:metrica} with the data satisfying \eqref{eq:condLor}. Conversely, it is straightforward to check that all the spacetimes of this form admit a complete timelike Killing vector field and satisfy the observer manifold condition.
 \begin{defi}
 	\label{def:classspacetimes}
 Stationary-complete spacetimes satisfying the observer manifold condition, or equivalently, the spacetimes of the form \eqref{def:metrica} and satisfying \eqref{eq:condLor} will be called  {\som}-spacetimes for short. Moreover, the space of such  spacetimes defined on $\R\times S$ and having as  distinguished timelike Killing vector field $\partial_t$ ($t$ the first coordinate of $\R\times S$) will be denoted as ${\rm \som}(\R\times S)$.
 \end{defi}
Observe that if a stationary-complete spacetime does not satisfy the observer manifold condition, then it is not chronological \cite{Harris92}. As happens with standard stationary spacetimes (see \cite{CJS11}), the chronological relation can be characterized in terms of some ``metric'' structure on $S$. However, given the generality of these models, we will not obtain a Finsler metric as in the standard case, but a pre-Finsler one. Let us see how Fermat principle establishes a bridge between causal and metric concepts.
\begin{prop}\label{prop:Fermatmetric}
Let $(M,g)$ be an \som-spacetime and $\gamma:[a,b]\rightarrow M$, given by $\gamma(s)=(t(s),c(s))$ for all $s\in [a,b]$, a null curve. Then $\gamma$ is a lightlike future-directed geodesic if and only if $c$ is a pregeodesic of the pre-Finsler metric $F$ given by
\begin{equation}\label{eq:wsfinsler} F(v)=\frac{1}{\beta}\omega(v)+\sqrt{\frac{1}{\beta^2}\omega^2(v)+ \frac{1}{\beta}g_0(v,v)}
\end{equation} 
for $v\in TS$, parametrized with $h(\dot c,\dot c)=\frac{1}{\beta^2}\omega^2(\dot c)+ \frac{1}{\beta}g_0(\dot c,\dot c)$ constant. Moreover, 
for any future-directed lightlike curve $\tilde \gamma(s)=(\tau(s),x(s))$,
\begin{equation}\label{Flength}
\tau(s)=\int_a^s F(\dot x(\mu))d\mu.
\end{equation}
\end{prop}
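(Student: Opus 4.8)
The plan is to deduce the statement from the relativistic Fermat's principle \cite{Perlick90}, after first recording the pointwise relation between the time component and $F$. First I would insert $\dot\gamma=(\dot t,\dot c)$ into the lightlike condition $g(\dot\gamma,\dot\gamma)=0$; by \eqref{def:metrica} this is the quadratic $-\beta\dot t^2+2\dot t\,\omega(\dot c)+g_0(\dot c,\dot c)=0$ in $\dot t$, with roots $\dot t=\frac{1}{\beta}\omega(\dot c)\pm\sqrt{\frac{1}{\beta^2}\omega^2(\dot c)+\frac{1}{\beta}g_0(\dot c,\dot c)}$. Since $\partial_t$ is the future-directed Killing field and $g(\partial_t,(\tau,v))=-\beta\tau+\omega(v)$, a null vector $(\tau,\dot c)$ is future-directed exactly when $\tau>\frac{1}{\beta}\omega(\dot c)$, which selects the $+$ root; hence $\dot t=F(\dot c)$ along every future-directed lightlike curve. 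Integrating from $a$ to $s$ gives at once the ``moreover'' identity \eqref{Flength}. As a byproduct, fixing $t(a)=t_0$ the prescription $\dot t=F(\dot c)$ is a bijection between piecewise smooth spatial curves from $x_0$ and future-directed lightlike curves from $p=(t_0,x_0)$, under which the elapsed coordinate time equals $\ell_F(c)$.

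Next I would invoke Fermat's principle in the stationary setting. Fixing the source event $p=(t_0,x_0)$ and the Killing orbit $\ell_{x_1}=\{(t,x_1):t\in\R\}$ through a target $x_1$, consider the arrival-time functional $\mathcal T(\gamma)=t(b)-t_0$ on the future-directed lightlike curves from $p$ to $\ell_{x_1}$. By the previous paragraph $\mathcal T(\gamma)=\ell_F(c)$, and the lift above identifies this family of curves with $C(x_0,x_1)$ (the endpoint condition $\gamma(b)\in\ell_{x_1}$ corresponds to $c(b)=x_1$ with free arrival time). Perlick's theorem \cite{Perlick90} asserts that a future-directed lightlike curve is, up to reparametrization, a lightlike geodesic of $(M,g)$ if and only if it is a critical point of $\mathcal T$. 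Transported through the bijection, this says precisely that $c$ is a critical point of $\ell_F$ in $C(x_0,x_1)$, i.e. a pregeodesic of $F$ in the sense of Definition \ref{def:distance}, and conversely every pregeodesic of $F$ lifts to a lightlike geodesic. This settles the equivalence at the level of unparametrized curves.

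It then remains to match the parametrization, which I expect to be the delicate point. Affinity of a lightlike geodesic is detected by the Killing constant of motion $g(\dot\gamma,K)$ with $K=\partial_t$. Writing $g$ in completed-square form $g=-\beta\,\theta\otimes\theta+g_R$, with $\theta=dt-\frac{1}{\beta}\omega$ and $g_R=g_0+\frac{1}{\beta}\omega\otimes\omega$, one computes $\theta(\dot\gamma)=\sqrt{h(\dot c,\dot c)}$ on future-directed null vectors and therefore $g(\dot\gamma,K)=-\beta\,\theta(\dot\gamma)=-\beta\sqrt{h(\dot c,\dot c)}$. Passing to the conformal gauge $\frac{1}{\beta}g=-\theta\otimes\theta+h$ adapted to the optical structure (for which null pregeodesics are unchanged), the same constant of motion becomes $-\sqrt{h(\dot c,\dot c)}$, so that being an affinely parametrized null geodesic in this gauge is equivalent to $h(\dot c,\dot c)$ being constant. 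This is the parametrization asserted in the statement.

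The two obstacles I anticipate are the following. First, the rigorous use of Fermat's principle requires checking that the variational classes correspond exactly: admissible variations of $c$ in $C(x_0,x_1)$ must be in one-to-one correspondence, through $\dot t=F(\dot c)$, with variations of $\gamma$ among future-directed lightlike curves ending on $\ell_{x_1}$, and the first variations of $\mathcal T$ and $\ell_F$ must agree. Second, and more subtly, because null geodesics are only conformally invariant as point sets, the bookkeeping of the conformal normalization must be carried out carefully so that ``affinely parametrized null geodesic'' matches ``$h(\dot c,\dot c)$ constant''; this is where I would be most cautious. A self-contained alternative, bypassing Fermat's principle, is to write the geodesic equations of $g$ in the splitting, read off the Killing constant from the $t$-equation, and reduce the $S$-components to the Euler--Lagrange equation of $\ell_F$; this makes the parametrization transparent at the price of a longer computation.
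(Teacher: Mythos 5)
Your proposal follows the same route as the paper: the published proof likewise derives $\dot t=F(\dot c)$ from the null condition together with future-directedness (which immediately gives \eqref{Flength}) and then concludes by invoking Fermat's principle, citing \cite[Theorem 4.1]{CJM11} for the equivalence between future-directed lightlike geodesics and pregeodesics of $F$ with the stated normalization. The only place you go beyond that citation is the paragraph on the parametrization, and your caution there is justified: since the Killing constant of $g$ along a future-directed null curve is $g(\dot\gamma,\partial_t)=-\beta\sqrt{h(\dot c,\dot c)}$, the condition ``$h(\dot c,\dot c)$ constant'' is the affine normalization for the conformal representative $g/\beta$ (which has the same Fermat metric), and for non-constant $\beta$ it differs from the affine normalization of $g$ itself by exactly the conformal reparametrization you flag --- a discrepancy that is immaterial for everything the proposition is used for in the paper, namely pregeodesics and the chronological relation.
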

\begin{proof}
 First observe that  $\tilde\gamma$ is null if, and only if,
\[
0= g(\dot{\tilde\gamma}(s),\dot{\tilde\gamma}(s))=-\beta \dot{\tau}(s)^2+2\omega(\dot{x}(s))\dot{\tau}(s) + g_0(\dot{x}(s),\dot{x}(s)).
\]
Hence, if in addition, $\tilde \gamma$ is future-directed, it follows that
$\dot{t}(s)=F(\dot{x}(s))$ and then \eqref{Flength}. 
Using the Fermat Principle as for example in \cite[Theorem 4.1]{CJM11}, we conclude.
\end{proof}

Last proposition shows in particular that it is possible to define an exponential map for the pre-Finsler metric given in \eqref{eq:wsfinsler}. Indeed, in every direction $v\in TS\setminus 0$, one can consider the lightlike geodesic with initial velocity $(1/\beta \omega(v),v)$, which projects to a pre-geodesic of $F$. That is, the exponential of $F$ is roughly speaking the projection of the exponential of $g$ restricted to the lightlike cone. Observe however that defined in this way, this exponential map does not have any information about the affine parametrization of geodesics.  We will call the metric given in \eqref{eq:wsfinsler} the Fermat metric associated with $(M,g)$. Let us introduce the class of {\em pre-Randers metrics} in $S$ as those metrics $F:TS\rightarrow \R$ given by 
\begin{equation}\label{preRanders}
F(v)=\sqrt{h(v,v)}+\omega(v),
\end{equation}
for every $v\in TS$, where  $h$ and $\omega$ are, respectively, a Riemannian metric and a one-form in $S$. Observe that the only difference with the classical Randers metrics is that  no condition is required on  the one-form $\omega$ and, as a consequence, $F$ can take  negative values for some vectors. If we denote by 
 ${\rm pRand}(S)$ the subset of pre-Randers metrics on $S$, 
  then we have a map
\begin{equation*}
\varphi:{\rm \som }(\R\times S)\rightarrow {\rm pRand}(S),
\end{equation*}
where $\varphi(g)=F$ is the Fermat metric associated with $g$. This map is not injective (conformal metrics have the same image), but it is surjective, as given a pre-Randers metric as in \eqref{preRanders}, we can define an \som-spacetime, with $g_0=h-\omega\oplus\omega$ and $\beta=1$ (which clearly satisfies \eqref{eq:condLor}). As we will show soon, the stationary-complete spacetimes will be a great tool to study pre-Randers metrics, as well as pre-Randers metrics allow one to study the causality of the spacetime. 

 We will say that two
events $p$ and $q$ in a spacetime are
chronologically related, denoted $p\ll q$ (resp. strictly
causally related $p< q$)  if there exists a future-directed
timelike (resp.
 causal) curve $\gamma$ from $p$ to $q$; $p$ is causally related to $q$ if either $p<q$ or $p=q$, denoted $p\leq q$.
Then the {\it chronological future}  (resp. {\it causal future}) of
$p\in M$ is defined as $I^+(p)=\{q\in M : p\ll q\}$ (resp.
$J^+(p)=\{q\in M : p\leq q\}$).  We define the chronological and causal ``past"  analogously, denoting them
$I^-(p), J^-(p)$, respectively.  We will need to introduce the balls associated with a pre-Finsler metric $F$. The forward (resp. backward) ball associated with $F$ of center $x_0\in S$ and radius $r\in \R$ is defined as $B^+_F(x_0,r)=\{x\in S:d_F(x_0,x)<r\}$ (resp. $B^-_F(x_0,r)=\{x\in S:d_F(x,x_0)<r\}$, where $d_F$ is the pre-distance associated with $F$ (see Definition \ref{def:distance}).  The chronological relation is then characterized by:

\begin{prop}\label{teo:caracrel}
	Let $(M,g)$ be a spacetime as in \eqref{def:metrica} satisfying \eqref{eq:condLor}. Then, 
	\begin{enumerate}[(i)]
	\item  a vector $(\tau,v)\in TM\equiv \R\times TS$ is lightlike and future-directed if and only $\tau=F(v)$, 
	\item  a vector $(\tau,v)\in TM\equiv \R\times TS$ is timelike and future-directed if and only $\tau>F(v)$, 
	\item if $d_{F}$ is the pre-distance associated with $F$, then
	\begin{enumerate}[(a)]
		\item $(t_0,x_0)\ll (t_1,x_1)\iff d_{F}(x_0,x_1)<t_1-t_0$,
 \item $(t_0,x_0)\leq  (t_1,x_1)	 \Longrightarrow d_{F}(x_0,x_1)\leq t_1-t_0$,
\item  $I^+(t_0,x_0)=\bigcup_{s\in\R}\{t_0+s\}\times B_F^+(x_0,s)$,
	  \item   $I^-(t_0,x_0)=\bigcup_{s\in\R}\{t_0-s\}\times B_F^-(x_0,s)$. 
	    \end{enumerate} 
 	\end{enumerate}
\end{prop}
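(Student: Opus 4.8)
The plan is to reduce the global chronological statements to the pointwise causal-character computations (i) and (ii), which I would establish first by a direct quadratic analysis. Writing a tangent vector as $(\tau,v)\in\R\times T_xS$, the metric \eqref{def:metrica} gives
\begin{equation*}
g\big((\tau,v),(\tau,v)\big)=-\beta\tau^2+2\tau\,\omega(v)+g_0(v,v),
\end{equation*}
a quadratic in $\tau$ whose two roots are exactly
\begin{equation*}
\tau_\pm=\frac1\beta\omega(v)\pm\sqrt{\frac1{\beta^2}\omega(v)^2+\frac1\beta g_0(v,v)},
\end{equation*}
real and distinct for $v\neq0$ precisely by the Lorentzian condition \eqref{eq:condLor}, with $\tau_+=F(v)$. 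Hence $(\tau,v)$ is lightlike iff $\tau\in\{\tau_-,\tau_+\}$ and timelike iff $\tau<\tau_-$ or $\tau>\tau_+$. To single out the future cone I would compute $g((\tau,v),\partial_t)=-\beta\tau+\omega(v)$ and use that $\partial_t$ is the future-directed Killing field, so a causal vector is future-directed iff this quantity is negative; evaluating at the roots gives $g((\tau_\pm,v),\partial_t)=\mp\beta\sqrt{\frac1{\beta^2}\omega(v)^2+\frac1\beta g_0(v,v)}$, which selects $\tau=\tau_+=F(v)$ for lightlike vectors, proving (i), and $\tau>\tau_+=F(v)$ for timelike ones, proving (ii).

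For (iii), the forward implication of (a) and all of (b) follow by integrating the pointwise inequalities. If $\gamma(s)=(t(s),x(s))$, $s\in[a,b]$, is future-directed and timelike (resp. causal) with endpoints $(t_0,x_0)$ and $(t_1,x_1)$, then (ii) (resp. (i) and (ii)) gives $\dot t(s)>F(\dot x(s))$ (resp. $\ge$) at every $s$, so
\begin{equation*}
t_1-t_0=\int_a^b\dot t(s)\,ds>\ell_F(x)\ge d_F(x_0,x_1)
\end{equation*}
(resp. with $\ge$); the case $(t_0,x_0)=(t_1,x_1)$ in (b) is covered by $d_F(x_0,x_0)\le0$, coming from the constant curve. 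For the converse of (a), suppose $d_F(x_0,x_1)<t_1-t_0$ and choose $x:[a,b]\to S$ from $x_0$ to $x_1$ with $\ell_F(x)<t_1-t_0$; setting $\varepsilon=t_1-t_0-\ell_F(x)>0$, I would lift it by
\begin{equation*}
t(s)=t_0+\int_a^sF(\dot x(\mu))\,d\mu+\frac{\varepsilon}{b-a}(s-a),
\end{equation*}
so that $t(a)=t_0$, $t(b)=t_1$ and $\dot t(s)=F(\dot x(s))+\frac{\varepsilon}{b-a}>F(\dot x(s))$; by (ii) the lift $(t(s),x(s))$ is future-directed timelike, whence $(t_0,x_0)\ll(t_1,x_1)$.

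The statements (c) and (d) are then immediate rewritings of (a): by (a), $(t_1,x_1)\in I^+(t_0,x_0)$ iff $d_F(x_0,x_1)<t_1-t_0$, i.e. iff $x_1\in B_F^+(x_0,s)$ with $s=t_1-t_0$, which is (c); symmetrically $(t_1,x_1)\in I^-(t_0,x_0)$ iff $d_F(x_1,x_0)<t_0-t_1$, i.e. $x_1\in B_F^-(x_0,s)$ with $s=t_0-t_1$, giving (d).

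The main obstacle I anticipate is not conceptual but lies in the converse of (a): one must check that the lift is genuinely future-directed timelike along the entire piecewise-smooth curve, in particular at the corners of $x$ and at interior points where $\dot x(s)=0$ — there $F(\dot x(s))=0$ and the timelike character of the purely temporal vector $(\dot t(s),0)$ must be read off from $\dot t(s)=\varepsilon/(b-a)>0$ — and one must invoke the standard fact that a piecewise-smooth future-directed timelike curve witnesses the chronological relation $\ll$. Once (i) and (ii) reduce everything to the scalar comparison $\dot t\lessgtr F(\dot x)$, the remaining steps are routine.
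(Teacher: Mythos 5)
Your argument is correct and follows essentially the same route as the paper, which declares (i)--(ii) straightforward and refers part (iii) to \cite[Proposition 4.2]{CJS11}: the quadratic root analysis identifying $\tau_\pm$ with $g((\tau,v),\partial_t)=-\beta\tau+\omega(v)$ selecting the future cone, followed by the projection/lift argument for $\ll$, is precisely the intended proof. Your explicit attention to corners and to points where $\dot x=0$ in the lifted curve is a welcome detail that the reference handles implicitly.
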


\begin{proof}
	Part $(i)$ and $(ii)$ are straightforward and the proof of part $(iii)$  follows the same ideas as the one in \cite[Proposition 4.2]{CJS11}. 
	%
	%
	%
\end{proof}

Observe that  previous characterization is essentially the same  as the one obtained  in the standard case   (see \cite[Proposition 4.2]{CJS11}).  The main difference is that $d_{F}$ can take negative values, and so, it could happen that $(t_0,x_0)\ll (t_1,x_1)$ even if $t_1<t_0$.

That difference has implications even at the topological level. In fact, in the standard case, the balls of the distance $d_{F}$ determine the topology on $S$. However, in the non-standard case, that is no longer true. 

\subsection{On different splittings of \som-spacetimes}

\label{sec:almostisometriessplit}
Let us  investigate the relation between the pre-Finsler metrics corresponding to different splittings of the same \som-spacetime.   To do so,  consider $(M,g)$ a spacetime as in \eqref{def:metrica} satisfying \eqref{eq:condLor} and $S'\subset M$, a hypersurface  that  intersects exactly once every integral curve of the timelike Killing field (namely, the vertical lines). From now on, a hypersurface $S'$ with this property will be called a slice of $\R\times S$. Then, we can consider the smooth function $f:S\rightarrow \R$ characterized by the fact that $(f(x),x)\in S'$, so let us denote $S'=S_f$.  Conversely, every smooth function $f:S\rightarrow\R$ determines a slice $S_f=\{(f(x),x)\in\R\times S: x\in S\}$. For each one of these slices, one can obtain a different splitting $\R\times S_f$ of $M$ using the flow of the Killing vector field. If we define the map $\varphi: \R\times S\rightarrow \R\times S$, $\varphi(t,x)=(t+f(x),x)$, the metric $\varphi^*g$ is isometric to $g$, as $\varphi:(\R\times S,\varphi^*(g))\rightarrow (\R\times S,g)$ is an isometry, and it is also an \som-spacetime. 

\begin{prop}\label{diffsplittings}
Let $(\R\times S,g)$ be an \som-spacetime. Given an arbitrary function $f:S\rightarrow \R$,  the pullback metric $g^f=\varphi^*g$ is expressed as 
\[
g^f=-\beta dt\otimes dt+\omega^f\otimes ds+ds\otimes\omega^f + g^f_0 
\]	
where $\omega^f$=$\omega-\beta df$ and $g^f_{0}=g_{0}+df\otimes \omega+\omega\otimes df -\beta df\otimes df$. Moreover, $(\R\times S,\varphi^*g)$ is also an \som-spacetime and its  Fermat metric is given by $F^f=F-df$, where $F$ is the Fermat metric of $(\R\times S,g)$.
\end{prop}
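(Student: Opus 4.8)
The plan is to compute the pullback metric directly and then identify the resulting Fermat metric using the formula \eqref{eq:wsfinsler}. First I would set $\varphi(t,x)=(t+f(x),x)$ and note that its differential sends $\partial_t\mapsto\partial_t$ and, writing $s$ for the new time coordinate, $dt\mapsto ds+df$ under the pullback (since $t=s+f(x)$). Substituting $dt=ds+df$ into \eqref{def:metrica} and expanding gives
\[
g^f=-\beta(ds+df)\otimes(ds+df)+(ds+df)\otimes\omega+\omega\otimes(ds+df)+g_0.
\]
Collecting the pure $ds\otimes ds$ term yields the coefficient $-\beta$, unchanged; collecting the cross terms $ds\otimes(\cdot)$ and $(\cdot)\otimes ds$ produces $\omega-\beta\,df$, which is precisely $\omega^f$; and the remaining terms involving only $df$ and $\omega$ assemble into $g_0^f=g_0+df\otimes\omega+\omega\otimes df-\beta\,df\otimes df$, matching the stated expressions. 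This is a routine but careful bookkeeping of a symmetric tensor expansion.

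Next I would verify that $(\R\times S,g^f)$ is again an \som-spacetime. Since $\varphi$ is a diffeomorphism that commutes with the flow of $\partial_t$ (translations in the first factor), it is an isometry onto $(\R\times S,g)$, so $g^f$ is a Lorentz metric admitting the same complete timelike Killing field $\partial_t$ and satisfying the observer manifold condition; by Definition \ref{def:classspacetimes} it lies in ${\rm \som}(\R\times S)$. Alternatively, one can check \eqref{eq:condLor} directly for the new data, computing $g_0^f+\tfrac1\beta\omega^f\otimes\omega^f$ and observing that the $df$-contributions cancel, leaving exactly $g_0+\tfrac1\beta\omega\otimes\omega$, which is already a metric on $S$. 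Either route confirms $g^f\in{\rm \som}(\R\times S)$.

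Finally I would compute the Fermat metric $F^f$ of $g^f$ from \eqref{eq:wsfinsler} using the new data $(\beta,\omega^f,g_0^f)$. The cleanest step is to show that the quantity under the square root is invariant: the expression $\tfrac1{\beta^2}(\omega^f)^2(v)+\tfrac1\beta g_0^f(v,v)$ equals $\tfrac1{\beta^2}\omega^2(v)+\tfrac1\beta g_0(v,v)$, because this is exactly $\tfrac1\beta\bigl(g_0^f+\tfrac1\beta\omega^f\otimes\omega^f\bigr)(v,v)$ and we just argued that the bracket is splitting-independent. Hence only the linear term changes: the first summand in \eqref{eq:wsfinsler} becomes $\tfrac1\beta\omega^f(v)=\tfrac1\beta\omega(v)-df(v)$, so that $F^f(v)=F(v)-df(v)$, i.e.\ $F^f=F-df$, establishing that the two Fermat metrics are almost isometric in the sense of Definition \ref{almostisometric}.

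I expect the only real obstacle to be the tensor-algebra bookkeeping in the first step, namely keeping the non-symmetric mixed terms $ds\otimes df$, $df\otimes ds$, $ds\otimes\omega$, $\omega\otimes ds$ properly paired so that the symmetric combinations collapse correctly into $\omega^f\otimes ds+ds\otimes\omega^f$ and into $g_0^f$; once the pullback is expanded correctly, the identification of $F^f=F-df$ follows immediately from the invariance of the radicand, which is the conceptually clean part of the argument.
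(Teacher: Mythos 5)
Your computation is correct and is exactly the direct pullback argument the paper has in mind (the paper simply defers to the analogous computation in \cite[Prop.~5.9]{CJS11} rather than writing it out). In particular, your observation that $g_0^f+\tfrac1\beta\,\omega^f\otimes\omega^f=g_0+\tfrac1\beta\,\omega\otimes\omega$, which gives both the Lorentzian condition \eqref{eq:condLor} for $g^f$ and the invariance of the radicand in \eqref{eq:wsfinsler}, is the right way to see $F^f=F-df$ with no extra work.
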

\begin{proof}
	It follows the same lines as the proof of \cite[Prop. 5.9]{CJS11}.
\end{proof}

Recall if the difference of two Finsler metrics equals the differential of a function, then we say that they are almost isometric (see Definition \ref{almostisometric}). 
 \begin{cor}\label{cordiffsplittings}
	Two different pre-Randers metrics are associated with different splittings of the same \som-spacetime with the same Killing field $\partial_t$ if and only if they are almost isometric.
	\end{cor}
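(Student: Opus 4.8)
The plan is to read the corollary as a direct translation of Proposition \ref{diffsplittings} into the language of Definition \ref{almostisometric}, the key observation being that the family of Fermat metrics produced by all the splittings of a fixed \som-spacetime (with Killing field $\partial_t$) is precisely one almost-isometry class. Throughout I would fix a reference splitting $\R\times S$ of the spacetime $(M,g)$, with Fermat metric $F$, and recall from the discussion preceding Proposition \ref{diffsplittings} that any other splitting having the same Killing field $\partial_t$ is obtained, via the flow of $\partial_t$, from a slice $S_f$ determined by a smooth function $f:S\rightarrow\R$, with associated map $\varphi(t,x)=(t+f(x),x)$ satisfying $\varphi_*\partial_t=\partial_t$.

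For the forward implication, suppose two pre-Randers metrics $F_1\neq F_2$ arise from two splittings of the same \som-spacetime. Taking the first of these as the reference splitting, the second is the splitting with slice $S_f$ determined by some $f:S\rightarrow\R$, and Proposition \ref{diffsplittings} gives $F_2=F_1-df$, i.e.\ $F_1=F_2+df$. By Definition \ref{almostisometric} the two metrics are almost isometric. One should note that $F_1\neq F_2$ forces $df\neq 0$, so $f$ is non-constant and the two slices $\{0\}\times S$ and $S_f$ are genuinely distinct, in accordance with the hypothesis of different splittings.

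For the converse, suppose $F_1$ and $F_2$ are almost isometric, say $F_1=F_2+df$ with $f:S\rightarrow\R$. Since the map $\varphi:\som(\R\times S)\rightarrow{\rm pRand}(S)$ sending a spacetime to its Fermat metric is surjective (as observed after \eqref{preRanders}), there is an \som-spacetime whose Fermat metric in the reference splitting is $F_2$. Applying Proposition \ref{diffsplittings} to the function $-f$ produces a new splitting of this same spacetime, with the same Killing field $\partial_t$, whose Fermat metric is $F_2-d(-f)=F_2+df=F_1$. Thus $F_1$ and $F_2$ come from two splittings of a single \som-spacetime, and, exactly as above, $F_1\neq F_2$ guarantees $df\neq 0$, so the two splittings are distinct.

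I do not expect a serious obstacle here, since the substantive content is already packaged in Proposition \ref{diffsplittings}; the only points requiring care are bookkeeping the sign in the relation $F^f=F-df$ and invoking the surjectivity of $\varphi$ in the backward direction to realize the arbitrary pre-Randers metric $F_2$ as the Fermat metric of an actual \som-spacetime.
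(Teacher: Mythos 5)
Your proposal is correct and follows essentially the same route as the paper, which presents this corollary as an immediate consequence of Proposition \ref{diffsplittings}: the forward direction is the identity $F^f=F-df$, and the converse uses the surjectivity of the Fermat-metric assignment together with Proposition \ref{diffsplittings} applied to $-f$. The sign bookkeeping and the remark that $F_1\neq F_2$ forces $df\neq 0$ are both handled correctly.
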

\smallskip 
 The last corollary implies that almost isometric pre-Randers metrics must have many properties in common as they share the same Lorentzian manifold. 
As a final remark on this section, let us just recall that all the splittings over the same stationary-complete spacetime also share the same symmetrized pre-distance.

\begin{lemma}\label{lem:ind}
	 The symmetrized pre-distance $d_s$ given in \eqref{symm}  associated with the Fermat metric $F$ of a Lorentzian splitting $(\R\times S,g)$ as in \eqref{def:metrica}  is invariant by a change of splitting of $(\R\times S,g)$ as in Proposition \ref{diffsplittings}. 
\end{lemma}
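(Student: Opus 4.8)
The plan is to reduce the statement to the behaviour of the pre-distance under almost isometries, which was already recorded in \eqref{almostDF}. By Proposition \ref{diffsplittings}, any change of splitting of $(\R\times S,g)$ is realized by some smooth $f:S\to\R$ and changes the Fermat metric from $F$ to $F^f=F-df$; thus $F$ and $F^f$ are almost isometric in the sense of Definition \ref{almostisometric}. The entire content of the lemma is then that the additive correction term produced by $f$ is antisymmetric in its two arguments, hence invisible to the symmetrization \eqref{symm}.

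First I would compute the effect of the change of splitting on the pre-distance. Since $df$ integrates to a boundary term, for every curve $c\in C(x_0,x_1)$ one has $\ell_{F^f}(c)=\ell_{F-df}(c)=\ell_F(c)+f(x_0)-f(x_1)$, exactly as noted right after \eqref{almostDF}. Taking the infimum over $c\in C(x_0,x_1)$ and using that $f(x_0)-f(x_1)$ depends only on the endpoints yields
\[
d_{F^f}(x_0,x_1)=d_F(x_0,x_1)+f(x_0)-f(x_1),
\]
with the usual convention about sums with $-\infty$, so that if $d_F\equiv-\infty$ then also $d_{F^f}\equiv-\infty$.

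Then I would simply symmetrize. Writing $d_s^f$ for the symmetrized pre-distance of $F^f$, the displayed identity gives
\[
d_{F^f}(x_0,x_1)+d_{F^f}(x_1,x_0)=d_F(x_0,x_1)+d_F(x_1,x_0)+\big(f(x_0)-f(x_1)\big)+\big(f(x_1)-f(x_0)\big),
\]
and the last two parentheses cancel. Dividing by two gives $d_s^f(x_0,x_1)=d_s(x_0,x_1)$ for all $x_0,x_1\in S$, which is precisely the asserted invariance, since by Proposition \ref{diffsplittings} every change of splitting of $(\R\times S,g)$ arises from such an $f$. I do not expect a genuine obstacle here: the only point requiring care is the bookkeeping with the value $-\infty$, which is handled by observing that the correction $f(x_0)-f(x_1)$ is a finite number and therefore neither creates nor destroys the value $-\infty$. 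Hence all splittings in the same class share the same $d_s$, whether it is finite or identically $-\infty$.
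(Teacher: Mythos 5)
Your argument is correct and is essentially the paper's own proof: both reduce the change of splitting to the almost-isometry $F^f=F-df$ via Proposition \ref{diffsplittings}, apply the pre-distance identity \eqref{almostDF}, and observe that the endpoint correction $f(x_1)-f(x_0)$ cancels under the symmetrization \eqref{symm}. Your extra remark on the $-\infty$ bookkeeping is a harmless (and sensible) addition.
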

\begin{proof}
	 Let us consider $(\R\times S,g)$ and $(\R\times S,g^f)$ two given splittings of the stationary-complete spacetime $(M,g)$ with associated pre-Finsler metrics $F$ and $F^f$. From Corollary \ref{cordiffsplittings}, both pre-Finsler metrics are almost isometric, and so, there exists a function $f:S\rightarrow \R$ ensuring that $F=F^f+df$. 
	 From \eqref{almostDF}, it follows that:
	\[d_{F}(x_0,x_1)= d_{F^f}(x_0,x_1)+f(x_1)-f(x_0),\]
	hence the symmetrization given by \eqref{symm} of $d_{F}$ coincides with the symmetrization of $d_{F^f}$, as desired.
	%
\end{proof}
%


\subsection{The causal ladder}\label{s:ladder}

The aim in this subsection is to characterize the causal global structure of a spacetime $(M,g)$ as in \eqref{def:metrica} (satisfying \eqref{eq:condLor}) in terms of the associated $d_{F}$, by using essentially the characterization given in Prop. \ref{teo:caracrel}. Let us start with the first two steps of the causal ladder

\begin{prop}\label{prop:escalera1}
	Let $(M,g)$ be a spacetime as in \eqref{def:metrica}. Then $(M,g)$ is:
	\begin{itemize}
		\item[(C1)]   totally vicious if and only if there exists a closed curve $\gamma$ such that $\ell_F(\gamma)<0$ if and only if $\exists  \,x\in S$ such that $d_F(x,x)<0$ (and then $d_F\equiv -\infty$).                   
		\item[(C2)] chronological if and only if $d_{F}(x,x)=0$ for  some (and then, all) $x\in S$ if and only if $d_s(x,y)\geq 0$ for every $x,y\in S$. 
	\end{itemize}
\end{prop}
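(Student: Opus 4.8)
The plan is to reduce both (C1) and (C2) entirely to the chronological characterization of Proposition~\ref{teo:caracrel}(iii)(a), namely $(t_0,x_0)\ll(t_1,x_1)\iff d_F(x_0,x_1)<t_1-t_0$, together with the dichotomy $d_F(x,x)\in\{-\infty,0\}$ and the propagation statements of Proposition~\ref{prop:continuidad}. The only genuinely geometric input is the translation between closed timelike curves and the values of $d_F$ on the diagonal; everything else is bookkeeping with the triangle inequality and the $\{-\infty,0\}$ alternative.

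For (C1), I would first observe that total viciousness — every event chronologically precedes every event, i.e.\ $p\ll q$ for all $p,q$ — forces in particular $(0,x)\ll(0,x)$, which by \ref{teo:caracrel}(iii)(a) gives $d_F(x,x)<0$. Conversely, if $d_F(x,x)<0$ for a single $x$, then Proposition~\ref{prop:continuidad}(iii) yields $d_F\equiv-\infty$, and then for arbitrary $p=(t_0,x_0)$, $q=(t_1,x_1)$ one has $d_F(x_0,x_1)=-\infty<t_1-t_0$, so $p\ll q$: the spacetime is totally vicious (the reverse direction thus produces the strongest form of viciousness, so the argument is insensitive to the precise definition adopted). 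The equivalence with the existence of a closed curve of negative $F$-length is then immediate from $d_F$ being an infimum of lengths over loops: a loop $\gamma$ based at $x$ with $\ell_F(\gamma)<0$ gives $d_F(x,x)\le\ell_F(\gamma)<0$, and conversely $d_F(x,x)<0$ produces, by definition of the infimum, a loop of negative length. The parenthetical ``(and then $d_F\equiv-\infty$)'' is exactly \ref{prop:continuidad}(iii).

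For (C2), the ``some (and then all)'' clause is a direct consequence of the alternative $d_F(x,x)\in\{-\infty,0\}$ together with \ref{prop:continuidad}(iii): if $d_F(x,x)=0$ for one $x$ then $d_F\not\equiv-\infty$, which rules out $d_F(y,y)=-\infty$ for any $y$. Chronology means no event satisfies $p\ll p$, i.e.\ $d_F(x,x)\geq 0$, i.e.\ $d_F(x,x)=0$, for all $x$; this is the equivalence of the first two conditions via \ref{teo:caracrel}(iii)(a). For the third, I would use $d_s(x,x)=d_F(x,x)$ to get $d_s(x,y)\ge0\Rightarrow d_F(x,x)=0$, and conversely, assuming $d_F(x,x)=0$ for all $x$ (so that $d_F$ is real-valued by \ref{prop:continuidad}(iv)), the triangle inequality $0=d_F(x,x)\le d_F(x,y)+d_F(y,x)=2\,d_s(x,y)$ gives $d_s\ge0$.

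The main obstacle — such as it is — is purely notational: I must consistently exploit the $\{-\infty,0\}$ dichotomy so that the pointwise conditions ``$d_F(x,x)<0$'' and ``$d_F(x,x)=0$'' propagate to all of $S$, and I must keep the $-\infty$ convention under control in the symmetrization step, which is why I first record that $d_F(x,x)=0$ everywhere forces $d_F$ to be finite before invoking the triangle inequality for $d_s$. There is no analytic difficulty; the content is entirely carried by Propositions~\ref{teo:caracrel} and \ref{prop:continuidad}.
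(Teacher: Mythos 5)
Your proposal is correct and takes essentially the same route as the paper's own (very terse) proof: both reduce everything to the equivalence $(t,x)\ll(t,x)\iff d_F(x,x)<0$ from Proposition~\ref{teo:caracrel}, the dichotomy $d_F(x,x)\in\{-\infty,0\}$ via Proposition~\ref{prop:continuidad}(iii), and the triangle inequality $d_F(x,x)\le d_F(x,y)+d_F(y,x)=2d_s(x,y)$. You merely spell out the bookkeeping (including the finiteness check before symmetrizing) that the paper leaves implicit.
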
 
\begin{proof}

  From Prop. \ref{teo:caracrel}, $(t,x)\ll (t,x)$ if and only if $d_{F}(x,x)<0$. Then, $p\not\ll p$ for some   $p=(t,x)\in M$ if, and only if,  $d_{F}(x,x)=0$  and both, (C1) and (C2) follow  using part $(iii)$ of Proposition \ref{prop:continuidad} and the triangle inequality $d_F(x,x)\leq d_F(x,y)+d_F(y,x)=2 d_s(x,y)$.
\end{proof}


\begin{prop}\label{prop:esc2}
	$(M,g)$ is causal if and only if  $d_{F}(x,x)=0$ for all $x\in S$ and  there is no non-trivial closed curve $c$  with $\ell_F(c)=0$ in $S$.
\end{prop}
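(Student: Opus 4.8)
The plan is to characterize causality by negating the condition for the existence of closed causal (but possibly non-timelike) loops, in analogy with how Proposition \ref{prop:escalera1} handled the chronological case. Recall that a spacetime is \emph{causal} precisely when it is chronological (no closed timelike curves) and, in addition, admits no closed causal curves at all. So I would first dispose of the chronological part: by Proposition \ref{prop:escalera1}(C2), chronology is equivalent to $d_F(x,x)=0$ for all $x\in S$, which explains the first condition in the statement. What remains is to translate ``no closed causal curve through any point'' into the metric statement that there is no non-trivial closed curve $c$ in $S$ with $\ell_F(c)=0$.

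First I would set up the correspondence between closed causal curves in $M$ and closed curves in $S$. Given a future-directed causal curve $\tilde\gamma(s)=(\tau(s),x(s))$, part $(i)$ and $(ii)$ of Proposition \ref{teo:caracrel} say that being future-directed causal means $\dot\tau(s)\geq F(\dot x(s))$, with equality exactly when the tangent vector is lightlike. A causal curve is closed when its endpoints coincide, which forces both $x(\cdot)$ to be a closed loop in $S$ and $\tau(a)=\tau(b)$. Integrating the causal inequality $\dot\tau\geq F(\dot x)$ over $[a,b]$ gives $0=\tau(b)-\tau(a)\geq \int_a^b F(\dot x(s))\,ds=\ell_F(x)$. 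Thus a closed causal curve projects to a closed loop $c$ in $S$ with $\ell_F(c)\leq 0$. Conversely, from a closed loop $c$ with $\ell_F(c)\leq 0$ one builds a closed causal curve in $M$ by choosing $\tau$ with $\tau(a)=\tau(b)$ and $\dot\tau\geq F(\dot c)$ (possible precisely because the total increment demanded, $\ell_F(c)$, is non-positive, leaving room to return to the starting $t$-value), and if one also wants it non-trivially causal one arranges it to be genuinely future-directed somewhere.

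The crux is then a sharp dichotomy once chronology is assumed. Under the hypothesis $d_F(x,x)=0$ for all $x$, every closed loop $c$ automatically satisfies $\ell_F(c)\geq d_F(c(a),c(a))=0$. Combining this with the previous paragraph, a closed causal curve projects to a loop with $0\leq\ell_F(c)\leq 0$, hence $\ell_F(c)=0$; and conversely a non-trivial closed loop with $\ell_F(c)=0$ yields a closed causal curve (take $\tau\equiv t_0$ constant, so that $\dot\tau=0=F(\dot c)$ forces each tangent vector to be lightlike, producing a closed \emph{lightlike} curve through $(t_0,c(a))$, which violates causality). The equivalence follows: assuming chronology, $(M,g)$ fails to be causal exactly when there exists a non-trivial closed causal curve, exactly when there exists a non-trivial closed loop $c$ in $S$ with $\ell_F(c)=0$.

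The main obstacle I anticipate is the careful bookkeeping in the converse direction, namely verifying that a closed loop with $\ell_F(c)=0$ genuinely produces a non-trivial closed causal curve rather than a degenerate constant one, and handling the ``non-trivial'' qualifier so as to exclude the trivial constant loops (for which $\ell_F\equiv 0$ vacuously). One must ensure $c$ is a non-constant loop, so that the lightlike lift $(t_0,c(s))$ is a genuine closed causal curve, and also confirm that reparametrization issues (the loop being only piecewise smooth, or $F$ possibly taking the value $0$ on some tangent directions making the fundamental tensor degenerate) do not obstruct building the lift. Since the characterization in Proposition \ref{teo:caracrel} is pointwise in the tangent vector and does not require regularity of the fundamental tensor, this degeneracy is harmless here, but it is the point that deserves the most attention in the write-up.
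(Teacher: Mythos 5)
Your overall strategy --- dispose of chronology via Proposition \ref{prop:escalera1}(C2), then match non-trivial closed causal curves in $M$ with non-trivial zero-$F$-length loops in $S$ --- is exactly the paper's, and your forward direction (integrating $\dot\tau\geq F(\dot x)$ around a closed causal curve to get $\ell_F(x)\leq 0$, then $\ell_F(x)\geq d_F(x(a),x(a))=0$ from chronology) is correct. The converse, however, contains a concrete error: given a non-trivial loop $c$ with $\ell_F(c)=0$ you lift it to the constant-time curve $s\mapsto (t_0,c(s))$ and assert ``$\dot\tau=0=F(\dot c)$''. The hypothesis $\ell_F(c)=\int F(\dot c(s))\,ds=0$ does not give $F(\dot c(s))=0$ pointwise; for a pre-Randers metric the integrand can perfectly well be positive on part of the loop and negative on the rest. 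Wherever $F(\dot c(s))>0$, the vector $(0,\dot c(s))$ fails the causality criterion $\tau\geq F(v)$ of Proposition \ref{teo:caracrel}, so your constant lift need not be a causal curve at all.

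The fix is already latent in your own earlier paragraph: the lift must be taken lightlike, $t(s)=t_0+\int_0^s F(\dot c(\mu))\,d\mu$ as in \eqref{Flength}, whose tangent $(F(\dot c),\dot c)$ is future-directed lightlike by Proposition \ref{teo:caracrel}(i) wherever $\dot c\neq 0$, and which closes up precisely because $t(1)-t(0)=\ell_F(c)=0$. Indeed, your general causal lift with $\dot\tau\geq F(\dot c)$ and $\tau(a)=\tau(b)$ forces $\dot\tau=F(\dot c)$ almost everywhere once $\ell_F(c)=0$, so the lightlike lift is the only possibility; this is exactly the correspondence the paper's (very terse) proof invokes via Proposition \ref{prop:Fermatmetric}. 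With that one correction, and the brief observation that a non-trivial closed causal curve cannot project to a constant loop (else $\dot\tau\geq F(0)=0$ and $\tau(a)=\tau(b)$ would force the whole curve to be constant), your argument is complete and coincides with the paper's.
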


\begin{proof}
The equivalence follows from part (C2) in Proposition \ref{prop:escalera1} and the fact that any  closed  curve $c$  defines a null closed curve $\gamma(s)=(t(s),c(s)):[0,1]\rightarrow \R\times S$ with $\dot t= F(\dot c)$ if and only if with $\ell_{F}(c)=0$ (and so $0=t(1)-t(0)=\int_0^1F(\dot{c})ds$, recall Proposition \ref{prop:Fermatmetric}). 
%
%
%
\end{proof}

%
%
%
%
%

\smallskip 

For the following step on the ladder (that is, past and future distinguishing), we will need first to characterize the future and the past of points in $(M,g)$. Let $p_0=(t_0,x_0)\in M$ be an arbitrary point and observe that
%

\begin{equation}\label{eq:characpasado} 
\begin{array}{rl}
I^-(p_0) & = \{(t,x)\in M: (t,x)\ll (t_0,x_0)\}\\
& = \{(t,x)\in M: d_{F}(x,x_0)<t_0-t\}\\
& = \{(t,x)\in M: t<t_0-d_{F}(x,x_0)\}.
\end{array}
\end{equation}

In conclusion, and in a complete analogy with the standard case, the past of a point is determined by the function $d_{p_0}^+(\cdot):=t_0-d_{F}(\cdot,x_0)$. 

\begin{lemma}\label{lem:chroncontained}
	 $I^+(p_0)\subseteq I^+(p_1)$  (resp. $I^-(p_0)\subseteq I^-(p_1)$) if and only if $d_{F}(x_1,x_0)\leq t_0-t_1$ (resp. $d_{F}(x_1,x_0)\leq  t_1-t_0 $).
\end{lemma}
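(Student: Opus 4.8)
The plan is to reduce the set inclusion of chronological futures (pasts) to a pointwise inequality on $S$, using the explicit description of $I^{\pm}$ furnished by Proposition \ref{teo:caracrel}, and then to evaluate that inequality by a single application of the triangle inequality for $d_F$. Concretely, part $(iii)(a)$ of Proposition \ref{teo:caracrel} gives
\[
(t,x)\in I^+(p_0)\iff d_F(x_0,x)<t-t_0\iff t>t_0+d_F(x_0,x),
\]
and likewise for $I^+(p_1)$. Hence $I^+(p_0)\subseteq I^+(p_1)$ means exactly that, for every $x\in S$, the half-line $\{t:t>t_0+d_F(x_0,x)\}$ is contained in $\{t:t>t_1+d_F(x_1,x)\}$. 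Since $\{t:t>a\}\subseteq\{t:t>b\}$ holds iff $b\leq a$, this is equivalent to
\[
d_F(x_1,x)-d_F(x_0,x)\leq t_0-t_1\qquad\text{for all }x\in S.
\]

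Before exploiting this, I would dispose of the totally vicious case, which is also what forces the difference above to be meaningful. By parts $(iii)$ and $(iv)$ of Proposition \ref{prop:continuidad}, either $d_F\equiv-\infty$, or $d_F$ is real-valued everywhere and $d_F(x,x)=0$ for all $x$. If $d_F\equiv-\infty$, then $I^+(p_0)=I^+(p_1)=M$ and simultaneously $d_F(x_1,x_0)=-\infty\leq t_0-t_1$, so both sides of the asserted equivalence hold trivially. I therefore assume from now on that $d_F$ is finite, so that in particular $d_F(x_0,x_0)=0$ and the displayed reduction is legitimate.

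The heart of the argument is to recognize the displayed condition as a statement that a supremum is bounded. By the triangle inequality $d_F(x_1,x)\leq d_F(x_1,x_0)+d_F(x_0,x)$ one has $d_F(x_1,x)-d_F(x_0,x)\leq d_F(x_1,x_0)$ for every $x$, with equality at $x=x_0$ (here is where $d_F(x_0,x_0)=0$ enters), whence
\[
\sup_{x\in S}\bigl(d_F(x_1,x)-d_F(x_0,x)\bigr)=d_F(x_1,x_0).
\]
Consequently the ``for all $x$'' inequality holds if and only if $d_F(x_1,x_0)\leq t_0-t_1$, which is precisely the claim for the future. The two implications of the lemma are thus packaged into the two facts just used: the triangle inequality yields the implication ``$\Leftarrow$'', while evaluation at the basepoint $x=x_0$ yields ``$\Rightarrow$''.

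The past case is handled by the same method, starting from the description of $I^-(p_0)$ recorded in \eqref{eq:characpasado}: the inclusion $I^-(p_0)\subseteq I^-(p_1)$ translates into $d_F(x,x_1)-d_F(x,x_0)\leq t_1-t_0$ for all $x$, and the analogous supremum computation (attained at $x=x_0$, using $d_F(x,x_1)\leq d_F(x,x_0)+d_F(x_0,x_1)$) reduces it to the corresponding bound on $d_F$. I do not anticipate a genuine obstacle here; the only point needing care is the bookkeeping with the value $-\infty$ together with the identity $d_F(x_0,x_0)=0$, since it is exactly this that makes the evaluation at the basepoint pin down the distance rather than merely bound it.
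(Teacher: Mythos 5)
Your argument is correct, and it takes a genuinely different (though closely parallel) route from the paper's. The paper's proof stays at the causal level: since $\partial_t$ is timelike and the relation $\ll$ is open and transitive, $I^+(p_0)\subseteq I^+(p_1)$ holds iff the single test point $(t_0+\epsilon,x_0)$ lies in $I^+(p_1)$ for every $\epsilon>0$, and Proposition \ref{teo:caracrel} is then applied once to that point. You instead translate both futures fiberwise into half-lines over $S$ and identify the inclusion with the bound $\sup_{x\in S}\bigl(d_F(x_1,x)-d_F(x_0,x)\bigr)\leq t_0-t_1$, evaluating the supremum via the triangle inequality together with $d_F(x_0,x_0)=0$. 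Your evaluation at $x=x_0$ plays exactly the role of the paper's test point, and the triangle inequality replaces transitivity of $\ll$; what your version buys is a purely metric proof that makes explicit both the degenerate case $d_F\equiv-\infty$ and the use of Proposition \ref{prop:continuidad} to get $d_F(x_0,x_0)=0$, points the paper leaves implicit. One thing you should state rather than leave as ``the corresponding bound'': your computation for the past case yields $d_F(x_0,x_1)\leq t_1-t_0$, not the literal $d_F(x_1,x_0)\leq t_1-t_0$ printed in the statement. Your version is the correct one --- it is what the test-point argument also gives, and it is the form actually invoked in the proof of Proposition \ref{prop:disting} --- so the indices in the ``resp.''\ clause of the statement are a typo; your write-up should record the conclusion explicitly and note the discrepancy instead of eliding it.
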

\begin{proof}
	 Observe that as $\partial_t$ is timelike and the chronological condition is open, $I^+(p_0)\subseteq I^+(p_1)$ if and only if $(t_0 + \epsilon,x_0)\in I^+(p_1)$ for every $\epsilon>0$, but the last condition is equivalent to $d_{F}(x_1,x_0)\leq t_0-t_1$ (the other equivalence is proved analogously). 
\end{proof}

This information is enough to effectively characterize when $(M,g)$ is past-distinguishing (being future-distinguishing analogous):

\begin{prop}\label{prop:disting}
 	$(M,g)$ is past-distinguishing (resp. future-distinguishing) if and only if $d_s$ is a distance, namely, for all $x_0,x_1\in S$ with $x_0\neq x_1$, $d_s(x_0,x_1)>0$. 
\end{prop}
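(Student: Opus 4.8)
The plan is to reduce the whole statement to Lemma~\ref{lem:chroncontained} followed by a one-line algebraic manipulation, treating the past case explicitly (the future case being entirely symmetric). Writing $p_0=(t_0,x_0)$ and $p_1=(t_1,x_1)$, the starting point is that $I^-(p_0)=I^-(p_1)$ is the conjunction of the two inclusions $I^-(p_0)\subseteq I^-(p_1)$ and $I^-(p_1)\subseteq I^-(p_0)$. Applying the past version of Lemma~\ref{lem:chroncontained} to each of them, I would obtain that $I^-(p_0)=I^-(p_1)$ holds if and only if
\[
d_F(x_1,x_0)\le t_1-t_0 \qquad\text{and}\qquad d_F(x_0,x_1)\le t_0-t_1 .
\]
Adding the two inequalities (with the usual $-\infty$ conventions) immediately gives $2\,d_s(x_0,x_1)=d_F(x_0,x_1)+d_F(x_1,x_0)\le 0$, i.e. $d_s(x_0,x_1)\le 0$; this is the bridge between the causal and the metric sides that drives both implications.

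For the implication that $d_s$ being a distance forces past-distinguishability, I would argue by contradiction. If $(M,g)$ were not past-distinguishing, there would exist $p_0\ne p_1$ with $I^-(p_0)=I^-(p_1)$, and the display above would yield $d_s(x_0,x_1)\le 0$. If $x_0\ne x_1$ this contradicts the hypothesis $d_s>0$ off the diagonal. If instead $x_0=x_1$, then necessarily $t_0\ne t_1$, and the two inequalities read $d_F(x_0,x_0)\le t_1-t_0$ and $d_F(x_0,x_0)\le t_0-t_1$. Here I would invoke Proposition~\ref{prop:continuidad}(iii): since the hypothesis $d_s>0$ at some pair forces $d_s\not\equiv-\infty$ and hence $d_F\not\equiv-\infty$, the value $d_F(x_0,x_0)=-\infty$ is excluded, so $d_F(x_0,x_0)=0$; the two inequalities then give $0\le t_1-t_0$ and $0\le t_0-t_1$, whence $t_0=t_1$, a contradiction.

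For the converse I would again pass to the contrapositive. Assume $d_s$ is not a distance, so there are $x_0\ne x_1$ with $d_s(x_0,x_1)\le 0$. If $d_F\equiv-\infty$, then $(M,g)$ is totally vicious by Proposition~\ref{prop:escalera1} and $I^-(p)=M$ for every $p$, so it is visibly not distinguishing. Otherwise $d_F$ is real-valued by Proposition~\ref{prop:continuidad}(v); setting $a=d_F(x_1,x_0)$ and choosing $t_0=0$, $t_1=a$, the assumption $a+d_F(x_0,x_1)\le 0$ makes both inequalities of the display valid, so that $p_0=(0,x_0)$ and $p_1=(a,x_1)$ are distinct points (as $x_0\ne x_1$) with $I^-(p_0)=I^-(p_1)$; hence $(M,g)$ fails to be past-distinguishing. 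The future-distinguishing statement follows by the symmetric argument, using the future inclusions in Lemma~\ref{lem:chroncontained}; summing the resulting inequalities again reduces to $d_s(x_0,x_1)\le 0$, so the same dichotomy applies. I do not anticipate a genuine obstacle: the only delicate points are the bookkeeping on the diagonal $x_0=x_1$ and the separate treatment of the case $d_F\equiv-\infty$, both of which are dispatched cleanly by Proposition~\ref{prop:continuidad}.
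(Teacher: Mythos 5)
Your argument is correct and follows essentially the same route as the paper's: both directions reduce, via Lemma~\ref{lem:chroncontained}, to the characterization of $I^-(p_0)=I^-(p_1)$ by two inequalities whose sum is $2d_s(x_0,x_1)\le 0$, with the diagonal case $x_0=x_1$ dispatched through Proposition~\ref{prop:continuidad}(iii). Your explicit treatment of the case $d_F\equiv-\infty$ in the converse (where the choice $t_1=-d_F(x_1,x_0)$ made in the paper would be undefined) is a small but welcome extra precaution rather than a deviation in method.
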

\begin{proof}
  From Lemma \ref{lem:chroncontained}, we deduce that  $I^-(p_0)=I^-(p_1)$ if and only if $d_{F}(x_0,x_1)\leq t_1-t_0$ and $d_{F}(x_1,x_0)\leq t_0-t_1$. Then if $(M,g)$ is not past-distinguishing, there exist $p_0=(t_0,x_0)$  and $p_1=(t_1,x_1)$, $p_0\not=p_1$, such that $d_{F}(x_0,x_1)\leq t_1-t_0$ and $d_{F}(x_1,x_0)\leq t_0-t_1$, which implies that $d_s(x_0,x_1)\leq 0$.  If $x_0\neq x_1$, we deduce that $d_s$ is not a distance, so assume that  $x_0=x_1$.  As $p_0\neq p_1$, $t_0\not=t_1$ and this implies that $d_s(x_0,x_0)=d_{F}(x_0,x_0)\leq \min \{t_0-t_1,t_1-t_0\}<0$. By part $(iii)$ of Proposition \ref{prop:continuidad}, $d_F=d_s=-\infty$ and therefore $d_s$ is not a distance.

  Assume now that $d_s$ is not a distance. Then there exist $x_0,x_1\in S$, $x_0\not=x_1$, such that $d_s(x_0,x_1)\leq 0$. This implies that  $\min\{d_{F}(x_0,x_1),d_{F}(x_0,x_1)\}\leq 0$. Assume without loss of generality that  $d_{F}(x_1,x_0)\leq 0$ and let $t_1=-d_{F}(x_1,x_0)$. Then if $p_0=(0,x_0)$ and $p_1=(t_1,x_1)$, we  have  that $d_{F}(x_1,x_0)=-t_1$ and as $d_s(x_0,x_1)\leq 0$, $d_{F}(x_0,x_1)\leq -d_{F}(x_1,x_0)=t_1$. By Lemma \ref{lem:chroncontained}, $I^-(p_0)=I^-(p_1)$, namely, $(M,g)$ is not
	past-distinguishing.

	%
	%
	%
	%
\end{proof}

\medskip
%
The following three steps on the causal ladder, that is, strongly causal, stably causal and causally continuous are consequences of the past-distinguishing condition. In fact:

\begin{cor}\label{prop:caucon}
	$(M,g)$ is causally continuous if and only if it is   (future or past)  distinguishing.
\end{cor}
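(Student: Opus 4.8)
The plan is to route both implications through the \emph{reflecting} property, which turns out to hold for \emph{every} \som-spacetime. Recall the Hawking--Sachs characterization of causal continuity: a spacetime is causally continuous precisely when it is distinguishing (both past and future) and reflecting. Thus the true content of the corollary is that, for \som-spacetimes, reflecting is automatic, so that causal continuity collapses onto the distinguishing level. Since by Proposition \ref{prop:disting} the past-distinguishing, future-distinguishing and full distinguishing conditions are all equivalent to $d_s$ being a distance, it suffices to prove: (a) causally continuous $\Rightarrow$ (future or past) distinguishing, and (b) (future or past) distinguishing $\Rightarrow$ reflecting. Then (a), (b) and Hawking--Sachs close the equivalence.

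Implication (a) is immediate from the causal ladder: causal continuity sits above the distinguishing step, so a causally continuous spacetime is in particular past and future distinguishing, hence future or past distinguishing.

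For (b), I would in fact show that reflecting holds for an arbitrary \som-spacetime, with no hypothesis at all. Fix $p=(t_p,x_p)$ and $q=(t_q,x_q)$ in $M=\R\times S$. The key observation is that the two set-inclusions entering the reflecting condition translate, via Lemma \ref{lem:chroncontained} (in the form used in the proof of Proposition \ref{prop:disting}), into the \emph{same} inequality for $d_F$. Indeed, the future-cone equivalence applied to the pair $(q,p)$ gives
\[
I^+(q)\subseteq I^+(p)\iff d_F(x_p,x_q)\le t_q-t_p,
\]
while the past-cone equivalence applied to the pair $(p,q)$ gives
\[
I^-(p)\subseteq I^-(q)\iff d_F(x_p,x_q)\le t_q-t_p .
\]
Hence $I^+(q)\subseteq I^+(p)\iff I^-(p)\subseteq I^-(q)$, which is exactly future reflecting; reading the inclusions in the opposite direction (equivalently, interchanging $p$ and $q$) yields past reflecting in the same way. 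Therefore $(M,g)$ is reflecting. Combined with the distinguishing hypothesis and Hawking--Sachs, this gives causal continuity.

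The only delicate point is bookkeeping: one must feed Lemma \ref{lem:chroncontained} the arguments and signs consistently so that, despite the asymmetry of $d_F$, both inclusions genuinely reduce to the single condition $d_F(x_p,x_q)\le t_q-t_p$. Once this matching is verified, reflecting is seen to be a structural feature of the splitting \eqref{def:metrica}, depending only on the cone description in Proposition \ref{teo:caracrel} and not on any separation axiom. As a bonus, because causal continuity lies above strong and stable causality, the same argument shows that strongly causal, stably causal and causally continuous all coincide with the distinguishing condition for \som-spacetimes, in agreement with the remark preceding the statement.
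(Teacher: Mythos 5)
Your proof is correct, but it takes a genuinely different route from the paper. The paper's own argument is a two-line reduction: it invokes Proposition \ref{prop:disting} to pass from (future or past) distinguishing to distinguishing, and then cites \cite[Theorem 1.2]{JS08} for the equivalence with causal continuity, so the real content is outsourced to that external reference. You instead make the mechanism explicit: via the Hawking--Sachs characterization (causally continuous $=$ distinguishing $+$ reflecting), the corollary reduces to the observation that \emph{every} \som-spacetime is reflecting, and this you extract directly from the cone description, since both $I^+(q)\subseteq I^+(p)$ and $I^-(p)\subseteq I^-(q)$ collapse to the single inequality $d_F(x_p,x_q)\le t_q-t_p$. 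I checked the index bookkeeping you flag as delicate: with Proposition \ref{teo:caracrel} one gets $I^+(q)\subseteq I^+(p)\iff (t_q+\epsilon,x_q)\in I^+(p)\ \forall\epsilon>0\iff d_F(x_p,x_q)\le t_q-t_p$, and $I^-(p)\subseteq I^-(q)\iff (t_p-\epsilon,x_p)\in I^-(q)\ \forall\epsilon>0\iff d_F(x_p,x_q)\le t_q-t_p$, so the matching is genuine. (Note that you are right to use the form of Lemma \ref{lem:chroncontained} appearing in the proof of Proposition \ref{prop:disting} rather than its printed statement, which has the arguments of $d_F$ transposed in the past-cone clause.) What each approach buys: the paper's is shorter and leans on an established theorem about conformally stationary spacetimes; yours is self-contained modulo the classical Hawking--Sachs theorem, isolates reflectivity as an unconditional structural feature of the splitting \eqref{def:metrica}, and as a byproduct recovers the collapse of the strongly causal, stably causal and causally continuous rungs onto the distinguishing one, which the paper only asserts in the surrounding prose.
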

\begin{proof}
	 Recall that, by Proposition \ref{prop:disting}, $(M,g)$ is distinguishing if and only if it is future or past distinguishing. Then the equivalence follows from  \cite[Theorem 1.2]{JS08} . 
\end{proof}

The rest of the steps of the causal ladder follow essentially in the same way as in the standard case, with a small variation in the characterization of global hyperbolicity. In fact,

\begin{thm}\label{teo:CSYGH}
	Let $(M,g)$ be a spacetime as in \eqref{def:metrica} satisfying \eqref{eq:condLor}. Then:
	
	\begin{itemize}
		\item[(CS)]  the following are equivalent:
		\begin{enumerate}[(i)]
			\item $(M,g)$ is causally simple,
			\item $J^+(p)$ is closed for all $p\in M$,
			\item $J^-(p)$ is closed for all $p\in M$,
			\item  $(S,F)$ is convex\footnote{That is, for any pair of points $x_1,x_2\in S$ there exists a curve $c:[0,1]\rightarrow S$ with $c(0)=x_1, c(1)=x_2$ and $\ell_{F}(c)=d_{F}(x_1,x_2)$.}.
		\end{enumerate}
		
		\item[(GH)] the following are equivalent:
		
		\begin{enumerate}[(i)]
			\item $(M,g)$ is globally hyperbolic 
			\item $d_s$ is a distance and the balls $B_s(x,r)$ are precompact for all $x\in S$ and $r>0$,
			\item {\cambios$d_{s}$ is a distance and it is satisfied:} (a) for any pair of points $x_0,x_1\in S$ and values $r,s \in\R$, the intersection $\overline{B}^-(x_0,r)\cap \overline{B}^+(x_1,s)$ is compact and (b) if for some $x_0 \in S$  there exists a sequence $\{x_n\}_{n\in\N}\subset S$ with $\lim_{n} d_{F}(x_0,x_n)=-\infty$ (resp. $\lim_{n} d_{F}(x_n,x_0)=-\infty$) then $\lim_{n}d_{s}(x_0,x_n)=\infty$.
                          
		\end{enumerate}
	\end{itemize}
\end{thm}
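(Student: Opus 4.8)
The plan is to translate each causal notion into the pre-Finsler data via Proposition~\ref{teo:caracrel} and read the items off. I would first discard the degenerate case: if $d_F\equiv-\infty$ the spacetime is totally vicious, hence neither causally simple nor globally hyperbolic, while simultaneously $(S,F)$ cannot be convex (no curve realizes an infinite infimum), $d_s\equiv-\infty$ is not a distance, and no symmetrized ball is compact; thus all four items fail together. So I may assume $d_F$ finite, hence continuous by Proposition~\ref{prop:continuidad}.

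For (CS) the key is an explicit description of causal futures. By parts (i)--(ii) of Proposition~\ref{teo:caracrel} a curve is future-directed causal exactly when $\dot t\ge F(\dot x)$, so
\[
(t_1,x_1)\in J^+(t_0,x_0)\iff \exists\,c\in C(x_0,x_1)\ \text{with}\ \ell_F(c)\le t_1-t_0.
\]
Since $I^+(t_0,x_0)=\{d_F(x_0,x_1)<t_1-t_0\}$ and $d_F$ is continuous, one gets $\overline{I^+(t_0,x_0)}=\{d_F(x_0,x_1)\le t_1-t_0\}$, and $J^+$ lies between $I^+$ and its closure, differing only on the boundary $\{d_F(x_0,x_1)=t_1-t_0\}$; such a boundary point belongs to $J^+$ precisely when the infimum defining $d_F(x_0,x_1)$ is attained. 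Hence $J^+(p)$ is closed for all $p$ iff every pair is joined by a minimizer, i.e. iff $(S,F)$ is convex, proving (ii)$\iff$(iv); the time-dual gives (iii)$\iff$(iv). Finally (i)$\Rightarrow$(ii) is immediate, while for (iv)$\Rightarrow$(i) convexity precludes loops of negative $F$-length (these would force $d_F(x,x)=-\infty$ and hence no minimizers), so $d_F(x,x)=0$, the spacetime is chronological, and together with the closedness of $J^\pm$ this yields causal simplicity exactly as in the standard stationary case.

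For (GH) I would run the cycle (i)$\Rightarrow$(ii)$\Rightarrow$(iv)$\Rightarrow$(iii)$\Rightarrow$(ii)$\Rightarrow$(i), using that a causal spacetime is globally hyperbolic iff all causal diamonds $J^+(p)\cap J^-(q)$ are compact. Projecting the diamond $J^+(a,x_1)\cap J^-(b,x_0)$ to $S$ by the same length computation, it fibers with compact $t$-intervals over sets governed by the boxes $\overline{B}^-(x_0,r)\cap\overline{B}^+(x_1,s)$ of (iii)(a). Then: (i)$\Rightarrow$(ii) because global hyperbolicity forces distinguishing, so $d_s$ is a distance by Proposition~\ref{prop:disting}, and compact diamonds project to precompact symmetrized balls; (ii)$\Rightarrow$(iv) is routine, since $d_s$ a distance gives $d_F(x,x)=0$ and, by continuity of $d_s$, each $\overline{B}_s(x_0,r)$ is a closed subset of a precompact ball, hence compact; and (iv)$\Rightarrow$(iii) is direct, a box being closed and (by the triangle inequality) contained in a symmetrized ball, hence compact, which is (iii)(a), while a sequence with $d_F(x_0,x_n)\to-\infty$ and $d_s(x_0,x_n)$ bounded would lie in a compact symmetrized ball and accumulate, contradicting continuity of $d_F$, which is (iii)(b).

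The crux, and the step I expect to be hardest, is (iii)$\Rightarrow$(ii), where the pre-Randers generality bites. Because $d_F$ need not be bounded below, a symmetrized ball $\overline{B}_s(x_0,r)=\{x:d_F(x_0,x)+d_F(x,x_0)\le 2r\}$ is not a priori contained in any box. Condition (iii)(b) is exactly what prevents this: if $d_F(x_0,\cdot)$ were unbounded above on the ball, choosing $x_n$ with $d_F(x_0,x_n)\to+\infty$ forces $d_F(x_n,x_0)\le 2r-d_F(x_0,x_n)\to-\infty$, whence $d_s(x_0,x_n)\to\infty$ by (iii)(b), contradicting $d_s(x_0,x_n)\le r$; symmetrically $d_F(\cdot,x_0)$ is bounded above, so the whole ball sits inside one box and is compact by (iii)(a), giving (ii). Lastly (ii)$\Rightarrow$(i): $d_s$ a distance makes the spacetime distinguishing, hence strongly causal by Corollary~\ref{prop:caucon}, and precompact symmetrized balls confine every causal diamond to a compact box with compact fibers, so diamonds are compact and the spacetime is globally hyperbolic. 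The only point requiring the same care as classically is upgrading the chronology these conditions furnish to the strict positivity of $d_s$ off the diagonal; apart from (iii)(b), which is vacuous in the standard case where $d_F\ge0$, everything parallels the usual Randers treatment.
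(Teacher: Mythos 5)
There are two genuine gaps, both caused by the same pre-Randers pathology: a nontrivial closed curve can have zero (or negative) $F$-length, which is impossible in the positive Randers setting of \cite{CJS11}. The first is in (CS)(iv)$\Rightarrow$(i), where you assert that chronology together with closedness of $J^{\pm}$ yields causal simplicity ``exactly as in the standard stationary case''. Causal simplicity requires causality, and causality does not follow from convexity: in the non-causal cylinder $(C,g_1)$ of Remark \ref{dscausality} one orientation of the circles $t=\mathrm{const}$ is a closed lightlike curve, so $F$ vanishes on that direction, $d_F\equiv 0$ is attained, $(S^1,F)$ is convex and $J^{+}(t_0,x_0)=\{t\geq t_0\}$ is closed, yet the spacetime is not causal. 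In \cite{CJS11} the spacetime is automatically causally continuous, which is exactly the ingredient that does not transfer; what is needed here is the absence of nontrivial closed curves of zero $F$-length (Proposition \ref{prop:esc2}), and that is not implied by (iv). Your opening dispatch of the degenerate case has the same blind spot: when $d_F\equiv-\infty$ one has $J^{\pm}(p)=M$, so (CS)(ii) and (iii) \emph{hold} while (i) and (iv) fail, contrary to your claim that all four items fail together.

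The second gap is in (GH)(iii)$\Rightarrow$(ii): you prove precompactness of the symmetrized balls (and your use of (iii)(b) to trap a ball inside a single box is exactly the right way to exploit that hypothesis), but you never show that $d_s$ is a distance, i.e.\ strictly positive off the diagonal. This is the other half of (ii), and it is precisely what feeds strong causality into your final step (ii)$\Rightarrow$(i). It cannot be extracted from (iii): the same cylinder satisfies (iii)(a) (every box is a closed subset of the compact $S^1$), satisfies (iii)(b) vacuously since $d_F\equiv 0$, and satisfies (iv), while $d_s\equiv 0$ and the spacetime is not even causal. So the positivity of $d_s$ must enter as an additional input. I note that the paper's own proof shares both lacunae — part (CS) is handled by a bare citation of \cite[Proposition 4.3(a)]{CJS11}, and its argument for (iii)$\Rightarrow$(i) only verifies compactness of the causal diamonds, not causality — so the remedy is to strengthen the hypotheses rather than to sharpen the argument. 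Apart from this, your treatment is sound and in places more explicit than the paper's: the boundary analysis giving (CS)(ii)$\Leftrightarrow$(iv) is the correct content behind the citation, and your direct proof of (GH)(i)$\Rightarrow$(ii) by projecting causal diamonds (in the spirit of Lemma \ref{lem:Harris1} of the Appendix) is a legitimate alternative to the paper's route through the standard-splitting theorem of \cite{JS08}, the splitting-invariance of $d_s$ (Lemma \ref{lem:ind}) and \cite[Theorem 4.3(b)]{CJS11}.
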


\begin{proof}
		As the distance $d_F$ is continuous (see Proposition \ref{prop:continuidad}), the equivalences in part $(CS)$ can be proved   in an analogous way  to \cite[part (a) of Proposition 4.3]{CJS11}. 
		
	
		 Consider then the equivalences in $(GH)$ and  let us prove first the equivalence between $\it (i)$ and $\it (ii)$. If $(M,g)$ is globally hyperbolic, then it is distinguishing and, by Proposition \ref{prop:disting}, $d_s$ is a distance. Moreover, by \cite[Theorem 1.2]{JS08}, $(M,g)$ admits a standard splitting and as $d_s$ does not depend on the splitting by Lemma \ref{lem:ind},   \cite[Theorem 4.3 (b)]{CJS11} concludes that the balls of the symmetrized distance $d_s$ are precompact. Conversely, if we assume that $d_s$ is a distance, by Proposition \ref{prop:disting}, we deduce that it is distinguishing and by \cite[Theorem 1.2]{JS08}, it admits a standard splitting. The invariance of $d_s$ by a change of the splitting and part $(b)$ of Theorem 4.3 in \cite{CJS11} conclude that $(M,g)$ is globally hyperbolic.
		
                 For the equivalence between $\it (i)$ and $\it (iii)$, assume first that $\it (i)$ holds. {\cambios As $(M,g)$ is globally hyperbolic, it is in particular past-distinguishing. Then, from Prop. \ref{prop:disting}, it follows that $d_{s}$ is a distance. The proof of $\it (a)$ follows} analogously to the proof of part $\it (b)$ in \cite[Theorem 4.3]{CJS11}. 
		%
		For $\it (b)$, let us assume by contradiction that there exists $x_0$ and a sequence $\{x_n\}\subset S$ such that $\lim_n d_{F}(x_0,x_n)=-\infty$ but $d_{s}(x_0,x_n)$  is bounded for all $n\in\N$ (the case with $\lim_n d_{F}(x_n,x_0)=-\infty$ is completely analogous). Observe that, by Proposition \ref{teo:caracrel},  $(0,x_0)\leq (d_{F}(x_0,x_n),x_n)$ for all $n\in\N$, i.e., the non-convergent sequence $\sigma:=\{(d_{F}(x_0,x_n),x_n)\}_{n\in\N}\subset J^+((0,x_0))$. If  $2d_{s}(x_0,x_n)=d_{F}(x_0,x_n)+d_{F}(x_n,x_0)<t_1$  for some constant $t_1$ and all $n\in\N$, it follows, by using again Proposition \ref{teo:caracrel},  that $(d_{F}(x_0,x_n),x_n)\ll (t_1,x_0)$, i.e., $\sigma\subset J^-((t_1,x_0))$. In conclusion, the sequence $\sigma$ is contained in $J^+((0,x_0))\cap J^-((t_1,x_1))$ but it has no convergent subsequence, a contradiction with the compactness of the latter. Therefore, $\it (b)$ follows.   
		
		\smallskip 
		
		{\cambios Now assume that $d_{s}$ is a distance and} that $\it (a)$ and $\it (b)$ are satisfied, and let us show that $(M,g)$ is globally hyperbolic. Consider $(t_0,x_0),(t_1,x_1)\in M$ two points and consider $J^+((t_0,x_0))\cap J^-((t_1,x_1))$. {\cambios As $d_{s}$ is a distance, then Prop. \ref{prop:disting} plus Cor. \ref{prop:caucon} ensures that $(M,g)$ is causally continuous. Hence, we only need to show that the  previous causal diamond is pre-compact in order to prove that $(M,g)$ is globally hyperbolic.}
                For this, let us consider a sequence $\{(t_n,x_n)\}_{n\in\N}\subset J^+((t_0,x_0))\cap J^-((t_1,x_1))$ and let us show that the sequence admits a convergent subsequence. The first step is to prove that $\{t_n\}_{n\in\N}$ admits a convergent subsequence. Otherwise, we can assume that $\{t_n\}\rightarrow -\infty$ (the case $+\infty$ is completely analogous). As $(t_n,x_n)\in J^+((t_0,x_0))\cap J^-((t_1,x_1))$ for all $n\in\N$, and, according to Proposition \ref{teo:caracrel}, we obtain the following inequalities:
		\[
		\begin{array}{l}\label{eq:eq2}
		d_{F}(x_0,x_n)\leq t_n-t_0\\d_{F}(x_n,x_1)\leq t_1-t_n,
		\end{array}
		\]
		for all $n\in\N$.
		As $\lim_{n\rightarrow +\infty }t_n=-\infty$ , and considering the first inequality, we can observe that $\lim_{n\rightarrow+\infty} d_{F}(x_0,x_n)=-\infty$. Now, from  both inequalities,  we have that for all $n$:
		\[
		d_{F}(x_n,x_1)\leq t_1-t_n\leq t_1-d_{F}(x_0,x_n)-t_0, 
		\]
		and so,
		\[
		d_{F}(x_n,x_1)+ d_{F}(x_0,x_n)\leq t_1-t_0,
		\]
		which contradicts $\it (b)$ as $\lim_{n\rightarrow +\infty} d_{s}(x_n,x_0)=+\infty$ and
		\[\begin{array}{rl}
		2d_s(x_n,x_0)=& d_{F}(x_n,x_0)+ d_{F}(x_0,x_n)\\ \leq & d_{F}(x_n,x_1)+d_{F}(x_1,x_0)+d_{F}(x_0,x_n)\\ \leq & t_1-t_0+d_{F}(x_1,x_0).
		\end{array}\]
		Therefore, we deduce that there exists a convergent subsequence of $\{t_n\}_{n\in\N}$ and we can prove that $(M,g)$ is globally hyperbolic analogously to the proof of \cite[Theorem 4.3 (b)]{CJS11}. 

\end{proof}

\begin{rem}\label{dscausality} Several observations are in order:

 (1) the symmetrized distance $d_s$ allows us to characterize the following causal conditions:
\begin{enumerate}[(i)]
\item $(M,g)$ is totally vicious if and only if $d_s\equiv-\infty$  (part $(C1)$ of Proposition \ref{prop:escalera1}), 
\item $(M,g)$ is chronological if and only if $d_s\geq 0$ (part $(C2)$ of Proposition \ref{prop:escalera1}), ,
\item $(M,g)$ is distinguishing (and then causally continuous) if and only if $d_s(x,y)\geq 0$ for every $x,y\in S$ with strict inequality when $x\not=y$ (Proposition \ref{prop:disting} and Corollary \ref{prop:caucon}), 
\item $(M,g)$ is globally hyperbolic if and only if  $d_s$ is a distance and the balls of $d_s$ are precompact (equivalences $\it (i)$ and $\it (ii)$  of Theorem \ref{teo:CSYGH} $(GH)$. 
\end{enumerate}
Nevertheless, causality and causal simplicity cannot be characterized in terms of $d_s$. Let us recover the examples given in \cite{JS08}. The first example consists in a chronological but not causal spacetime. Consider in $\R^2$ the Lorentzian metric $g_1$  for which  $X=\partial_t-\partial_x$ and
$Y=\partial_x$ are lightlike vectors that satisfy $g_1(X,Y)=-1$. The 
cylinder $C=\R\times S^1$ obtained by identifying $(t,x)\sim
(t,x+1)$ is non-causal, but chronological, and it admits the projection $K$ of $\partial_t$ as a stationary
vector field. Moreover, if we consider the product $C\times \R$, $g_2=g_1+dy^2$, and choose any
irrational number $a$  identifying $(t,x,y)\sim
(t,x,y+1)$ and  $(t,x,y)\sim
(t,x+1,y+a)$, the quotient spacetime $(\tilde M, \tilde g_2)$ is causal. In both cases, it is not difficult to see that $d_s\equiv 0$, in fact, $d_F\equiv 0$.   As causal simplicity is equivalent to the convexity of $F$, it does not admit a characterization in terms of $d_s$. 

(2) 	In spite of what happens  in  the standard case, the characterization of the global hyperbolicity  given in terms of the intersections of the balls  requires an additional condition $\it (b)$. That condition is automatically satisfied whenever $d_{F}(x,\cdot)$ or $d_{F}(\cdot,x)$ is bounded from below, as happens in the  classical  Finsler case where the distance is always positive.
\end{rem}

\subsection{Almost isometries and conformal maps}\label{almoconf} As it was commented in \S \ref{preliminaries}, an almost isometry $\varphi:(M,F)\rightarrow (M,F)$ of a pre-Finsler metric $F$ on $M$ (see Definition \ref{almostisometric}) is also an almost isometry for its associated pre-distance $d_F$ (see \eqref{almostDF}). Unfortunately, the converse is not true. A counterexample is provided by the Fermat metric associated with the non-distinguishing \som-spacetime $(\tilde M, \tilde g_2)$ given in Remark \ref{dscausality}. Observe that $\tilde M=\R\times S$  with $S= T^2$  and $\tilde g_2$ is the projection in the quotient spacetime of
\[g_2=-2dt\otimes dt-dx\otimes dt-dt\otimes dx+dy\otimes dy.\]
It follows that $F=\frac{1}{2}dx+\sqrt{\frac 14 dx^2+\frac 12 dy^2}$. The pre-distance associated with $F$ is $d_F=0$, since the $F$-length of the integral curve of $-\partial_x$ is zero  because  $F(-\partial_x)=0$, but this integral curve is dense on $S$ (recall that the pre-distance is continuous, Prop. \ref{prop:continuidad}).  Therefore, every bijection $\varphi:(S,d_F)\rightarrow (S,d_F)$ is an almost isometry for $d_F$, but not necessarily an almost isometry for $F$. As a consequence, $d_F$ is not a length space,  and then  the proof of  \cite[Lemma 3.1]{JLP15} does not work for pre-Finsler metrics in order to show that an almost isometry of $F$ is an isometry for the symmetrized pre-Finsler metric $\hat F$. There are also problems in  \cite[Proposition 3.3]{JLP15}, where it is used that the triangular function is continuous as  the triangular function cannot be used to characterize almost isometries  of pre-Finsler metrics.  However, \cite[Theorem 4.3]{JLP15} does generalize to pre-Finsler metrics:
\begin{thm}\label{confalmost}
Let $(\R\times S,g)$ be an \som-spacetime.
A map  $\psi: (\R\times S,g)\rightarrow (\R\times S,g)$ is conformal if and only if $\psi(t,x)=(t+f(x),\varphi(x))$ for all $(t,x)\in\R\times S$, with $f:S\rightarrow \R$ an arbitrary smooth function and $\varphi:(S,F)\rightarrow (S,F)$, an almost isometry of $(S,F)$, being $F$ the pre-Finsler metric associated with $(\R\times S,g)$.
\end{thm}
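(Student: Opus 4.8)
The plan is to reduce the statement to a pointwise condition on the lightlike cones, which by Proposition \ref{teo:caracrel}$(i)$ are encoded by the Fermat metric $F$. First I would record the structural consequence of preserving the distinguished Killing field: for the forward implication, since we are dealing with conformal maps of the \som-spacetime (those preserving $K=\partial_t$, as emphasized in the introduction), we have $\psi_*\partial_t=\partial_t$, so $\psi$ commutes with the flow $(t,x)\mapsto(t+s,x)$. Writing $\psi(t,x)=(T(t,x),X(t,x))$, this commutation forces $X$ to be independent of $t$, say $X(t,x)=\varphi(x)$, and $T(t,x)=t+f(x)$ with $f(x):=T(0,x)$; since $\psi$ is a diffeomorphism, $\varphi$ is a diffeomorphism of $S$. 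Thus $\psi(t,x)=(t+f(x),\varphi(x))$, and it only remains to determine, among maps of this form, exactly which are conformal (this simultaneously handles the converse implication, where the form is assumed from the outset).

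Next I would invoke the standard fact that a diffeomorphism between Lorentzian manifolds is conformal if and only if its differential carries null cones to null cones at every point (two Lorentzian scalar products with the same null cone are proportional, and the factor $\lambda=\psi^*g(\partial_t,\partial_t)/g(\partial_t,\partial_t)$ is then automatically smooth and positive since $g(\partial_t,\partial_t)=-\beta\neq0$). Because $\psi_*\partial_t=\partial_t$ preserves the time-orientation, this is equivalent to requiring that $d\psi$ send future-directed null vectors to future-directed null vectors.

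The heart of the argument is then a one-line computation. For $\psi(t,x)=(t+f(x),\varphi(x))$ one has $d\psi_{(t,x)}(\tau,v)=(\tau+df_x(v),\,d\varphi_x(v))$ under the identification $T_{(t,x)}M\equiv\R\times T_xS$. By Proposition \ref{teo:caracrel}$(i)$, $(\tau,v)$ is future-directed lightlike iff $\tau=F(v)$, and its image is future-directed lightlike iff $\tau+df_x(v)=F(d\varphi_x(v))$. Hence $d\psi$ preserves future null cones at every point if and only if $F(v)+df_x(v)=F(d\varphi_x(v))$ for all $x\in S$ and $v\in T_xS$, that is, $\varphi^*F=F+df$. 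By Definition \ref{almostisometric} this says precisely that $\varphi$ is an almost isometry of $(S,F)$, which establishes the equivalence in both directions. Equivalently, one may write $\psi=\tilde\varphi\circ\Phi_f$ with $\tilde\varphi(t,x)=(t,\varphi(x))$ and $\Phi_f(t,x)=(t+f(x),x)$; then $\psi^*g$ is again an \som-metric whose Fermat metric equals $\varphi^*F-df$, by naturality of the Fermat construction under spatial diffeomorphisms together with Proposition \ref{diffsplittings}, and since conformal \som-metrics share the same Fermat metric (the conformal factor cancels in \eqref{eq:wsfinsler}), $\psi$ is conformal iff $\varphi^*F-df=F$.

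The step I expect to be the main obstacle is the reduction in the first paragraph, namely that the relevant conformal maps preserve the distinguished Killing field $\partial_t$: without this the product form fails (Lorentz boosts of Minkowski space are conformal but mix $t$ with the spatial variables), so this is exactly where the distinguished role of $K$ in an \som-spacetime must be used. It is also the point where the generalization of \cite[Theorem 4.3]{JLP15} requires care, since—as noted before the statement—for a genuinely pre-Finsler $F$ the pre-distance $d_F$ need not define a length space; consequently the almost-isometry property of $\varphi$ cannot be recovered from $d_F$ alone, and must instead be read off directly from the null-cone computation above.
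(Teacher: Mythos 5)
Your proof is correct and is essentially the argument the paper delegates to \cite[Theorem 4.3]{JLP15}: use $\psi_*\partial_t=\partial_t$ to force the product form $\psi(t,x)=(t+f(x),\varphi(x))$, then translate preservation of the future null cones, via Proposition \ref{teo:caracrel}$(i)$, into the identity $\varphi^*F=F+df$. You are also right to flag the two points where the statement is imprecise and your reading is the intended one: ``conformal'' must mean ``conformal and preserving $K=\partial_t$'' (otherwise Lorentz boosts of Minkowski space are counterexamples to the forward implication), and $f$ is not arbitrary but is exactly the function realizing the almost isometry, $\varphi^*F-F=df$.
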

\begin{proof}
Analogous to \cite[Theorem 4.3]{JLP15}.
\end{proof}
 
\subsection{Causal Boundary of stationary spacetimes and pre-Randers metrics}  In \cite{FHSBuseman}, the c-completion of standard stationary
spacetimes was obtained in terms of the Busemann completion on the
associated Randers manifold. As the stationary spacetime must be strongly causal in order to have a consistent definition of causal boundary, it does not seem very interesting 
to consider pre-Randers metrics to study the c-completion, since in such a case, there is always a proper Randers metric available. In any case, and as happens in the standard case, the causal boundary suggests  several  constructions for pre-Randers metrics which are, in itself, of interest.  More precisely,  it can be possible to define the Cauchy completion and the Gromov compactification for pre-Randers metrics. Moreover, in analogy with the standard case, it seems feasible to define an analog for the Busemann completion endowed with a topology inherited from the causal boundary. 





\section{Conformal maps of Killing submersions} \label{s:confmapssub}
 Recall that in Riemannian geometry a Killing submersion is a Riemannian submersion $\pi:(E,g_R)\rightarrow (S,h)$ such that the fibers are one-dimensional and coincide with the orbits of a Killing vector field (see \cite{Man14}). Let us consider the case in that $E=\R\times S$, and the metric $g_R$ in $(x,t)\in \R\times S$ is given  by
\begin{equation}\label{e1Rie} g_R((\tau,v),(\tau,v))=\bar{g}_0(v,v)+2 \bar\omega(v) \tau+\bar\beta \tau^2,
\end{equation}
for $(\tau,v)\in \R\times T_xS$, where $\bar\beta$ is a positive smooth real function
 on $\R\times S$ and $\bar{g}_0$ and $\bar\omega$ are respectively  a Riemannian metric and a one-form on $S$. Moreover, \eqref{e1Rie} is a Riemannian metric if and only if 
\begin{equation}\label{conditionRie}
\bar{g}_0(v,v)-\frac{\bar\omega(v)^2}{\bar\beta}>0
\end{equation} 
for every $v\in TS\setminus 0$.  Observe that we can define an associated \som-spacetime $(\R\times S,g_L)$ with 
\begin{equation}\label{eq:5}
g_L((\tau,v),(\tau,v))=g_R((\tau,v),(\tau,v))-2 \frac{g_R((v,\tau),K)^2}{g_R(K,K)},
\end{equation}
where $K=(1,0)$ is the Killing field. Indeed,  it is straightforward to check that
\[g_L((\tau,v),(\tau,v))=\bar g_0(v,v)-\frac{2}{\bar\beta} \bar{\omega}(v)^2-2\tau \bar{\omega}(v)-\bar\beta \tau^2,\]
and therefore, this is a metric of the type \eqref{def:metrica} with $\beta=\bar\beta$, $\omega=-\bar{\omega}$ and $g_0=\bar g_0-\frac{2}{\beta}\bar\omega\otimes\bar\omega$ and \eqref{eq:condLor} is satisfied thanks to \eqref{conditionRie}. Roughly speaking we obtain a metric with the same values in the orthogonal subspace to the Killing field $K$ and such that the Killing field is still orthogonal to this subspace but with $g_L(K,K)=-g_R(K,K)$.

 Conversely, if we have a \som-metric $g_{L}$ as in \eqref{def:metrica} satisfying \eqref{eq:condLor}, we can define a Killing submersion $(S\times\R,g_R)$ with
\begin{equation}\label{eq:6}
g_R((\tau,v),(\tau,v))=g_L((\tau,v),(\tau,v))-2 \frac{g_L((\tau,v),K)^2}{g_L(K,K)},
\end{equation}
where $K=(1,0)$. Again
\[g_R((\tau,v),(\tau,v))=g_0(v,v)+\frac{2}{\beta}\omega(v)^2-2\tau\omega(v)+\beta\tau^2,\]
therefore $\bar\beta=\beta$, $\bar\omega=-\omega$ and $\bar{g}_0=g_0+\frac{2}{\beta}\omega\otimes\omega$. Moreover, \eqref{conditionRie} is satisfied because of \eqref{eq:condLor}.

Let us denote by ${\rm Stat}_R(\R\times S)$ the space of Riemannian metrics on $\R\times S$ having $K=\partial_t$ as a Killing field, or equivalently, which can be written as \eqref{e1Rie}. The above discussion can be summarized as follows.  

\begin{prop}\label{conformalmaps}
  Let us define  the  map
  \begin{equation}
    \label{eq:4}
    \begin{array}{cccc}
      \Psi: &{\rm\som}(\R\times S) & \rightarrow & Stat_{R}(\R\times S)\\
       & g_L & \rightarrow & g_{R}
    \end{array}    
  \end{equation}
  given by 
  \eqref{eq:6}. Then this maps is one-to-one, with inverse given by \eqref{eq:5} and
  \begin{itemize}
  \item[(i)] the metrics $g_{L}$ and $g_{R}$  share the same Killing vector field $K=\partial_t$ but with $g_L(K,K)=-g_R(K,K)$,
  \item[(ii)] both metrics have the same values in the orthogonal subspace to $K$,
  \item[(iii)] the sets of conformal maps of $g_R=\Psi^*(g_L)$ and $g_L$  which preserve the Killing vector field $K$ coincide. 
  \end{itemize}
\end{prop}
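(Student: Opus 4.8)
The plan is to recognize the map defined by \eqref{eq:6} (equivalently \eqref{eq:5}) as the \emph{reflection of a metric across the orthogonal complement of $K$}, and to reduce everything to a couple of algebraic properties of this reflection. For a metric $g$ with $g(K,K)\neq 0$, set
\[
T(g)(u,w):=g(u,w)-2\,\frac{g(u,K)\,g(w,K)}{g(K,K)},
\]
the polarization of the quadratic form in \eqref{eq:6}, so that $g_R=T(g_L)$. First I would record the three elementary identities obtained by feeding $K$ into $T(g)$: taking $w=K$ gives $T(g)(u,K)=-g(u,K)$, taking $u=w=K$ gives $T(g)(K,K)=-g(K,K)$, and if $g(u,K)=g(w,K)=0$ then $T(g)(u,w)=g(u,w)$. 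The first identity shows that the $g$-orthogonal and the $T(g)$-orthogonal complements of $K$ coincide, the second is exactly claim $(i)$, and the third is claim $(ii)$. In particular $T(g)$ agrees with $g$ on $K^\perp$, keeps $K\perp K^\perp$, and only flips the sign of $g(K,K)$, so $T$ really is the announced reflection.

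Next I would prove that $T$ is an involution, $T\circ T=\mathrm{id}$. Using $T(g)(u,K)=-g(u,K)$ and $T(g)(K,K)=-g(K,K)$, a one-line substitution gives
\[
T(T(g))(u,w)=T(g)(u,w)-2\,\frac{(-g(u,K))(-g(w,K))}{-g(K,K)}=g(u,w).
\]
The well-definedness of the two directions --- that $T$ sends an \som-metric \eqref{def:metrica} to a Riemannian metric of type \eqref{e1Rie} satisfying \eqref{conditionRie}, and conversely --- is precisely the explicit coordinate computation already carried out in the text preceding the statement. Combined with $T^2=\mathrm{id}$, this shows that $\Psi=T|_{{\rm\som}(\R\times S)}$ is a bijection onto $Stat_R(\R\times S)$ whose inverse is $T|_{Stat_R(\R\times S)}$, i.e. the map \eqref{eq:5}.

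The substantive point is $(iii)$, and I would isolate the two properties of $T$ that make it work. The first is homogeneity of degree one in the metric: for any positive function $\lambda$,
\[
T(\lambda g)(u,w)=\lambda g(u,w)-2\,\frac{\lambda g(u,K)\,\lambda g(w,K)}{\lambda g(K,K)}=\lambda\,T(g)(u,w).
\]
The second is equivariance under diffeomorphisms preserving $K$: if $\psi$ satisfies $d\psi(K)=K$ (that is, $K$ is $\psi$-related to itself), then, since $g(d\psi\,u,K)|_{\psi(p)}=g(d\psi\,u,d\psi\,K)|_{\psi(p)}=(\psi^*g)(u,K)|_{p}$ and likewise for $g(K,K)$, one checks directly that $\psi^*\big(T(g)\big)=T(\psi^*g)$. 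Granting these, if $\psi$ is a conformal map of $g_L$ preserving $K$, say $\psi^*g_L=\lambda g_L$ with $\lambda>0$, then
\[
\psi^*g_R=\psi^*\big(T(g_L)\big)=T(\psi^*g_L)=T(\lambda g_L)=\lambda\,T(g_L)=\lambda\,g_R,
\]
so $\psi$ is conformal for $g_R$ with the same factor and still preserves $K$. Since $T$ is an involution and $g_L=T(g_R)$, the identical argument run backwards yields the reverse inclusion, and the two sets of $K$-preserving conformal maps coincide.

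The only genuine obstacle here is conceptual rather than computational: one must notice that the reflection $T$ is simultaneously homogeneous of degree one in $g$ and equivariant for maps fixing $K$, since it is these two facts together --- and not any feature of the particular splitting --- that force a conformal factor for $g_L$ to be a conformal factor for $g_R$. Everything else reduces to the short substitutions displayed above and to the coordinate identities already verified in the text.
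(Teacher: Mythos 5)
Your proof is correct and rests on the same mechanism as the paper's: $\Psi$ is the reflection of the metric across $K^\perp$, so the two metrics agree on $K^\perp$ and differ only by the sign of $g(K,K)$, which forces a $K$-preserving conformal factor for one to be a conformal factor for the other. You merely make the paper's terse argument fully explicit by packaging it as the involution $T$ together with its homogeneity $T(\lambda g)=\lambda T(g)$ and its equivariance $\psi^*T(g)=T(\psi^*g)$ under $K$-preserving maps, all of which check out.
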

\begin{proof}
  Assertions (i) and (ii) are direct consequences of how the map $\Psi$ is defined by \eqref{eq:5} and \eqref{eq:6}. For (iii), just observe that this property follows from the fact that both metrics $g_R=\Psi^*(g_L)$ and $g_L$ coincide in the orthogonal space to $K$ by part $(ii)$ and they coincide up to the sign on the subspace generated by $K$ which, by hypothesis, is preserved by the conformal map. 
\end{proof}
\begin{cor}\label{confmapsKillingsub}
Given a Riemannian manifold $(\R\times S,g_R)$ with $g_R\in {\rm Stat_R}(\R\times S)$, the conformal maps of $(\R\times S,g)$ which preserve the Killing field $K$ are given by
$\psi:\R\times S\rightarrow \R\times S$, with $\psi(t,x)=(t+f(x),\varphi(x))$, and $\varphi:S\rightarrow S$ is an almost isometry of $(S,F)$, being 
\[F(v)=-\frac{1}{\beta}\bar \omega(v)+\sqrt{-\frac{1}{\beta^2}\bar\omega(v)^2+\frac{1}{\beta}\bar g_0(v,v)}.\]
\end{cor}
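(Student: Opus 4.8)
The plan is to reduce the statement to Theorem \ref{confalmost} by transporting $g_R$ to its associated \som-metric through the bijection $\Psi$ of Proposition \ref{conformalmaps}. First I would set $g_L=\Psi^{-1}(g_R)$, the \som-metric produced by formula \eqref{eq:5}. Proposition \ref{conformalmaps}$(iii)$ then asserts that the conformal maps of $g_R$ preserving $K=\partial_t$ coincide exactly with the conformal maps of $g_L$ preserving $K$, which moves the entire problem onto the Lorentzian side, where the machinery of \S \ref{s:stat} is available.

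Next I would apply Theorem \ref{confalmost} to the \som-spacetime $(\R\times S,g_L)$. That theorem says that a map is conformal for $g_L$ if and only if it has the form $\psi(t,x)=(t+f(x),\varphi(x))$ with $f:S\to\R$ smooth and $\varphi:(S,F)\to(S,F)$ an almost isometry of the Fermat metric $F$ of $g_L$. Since any such $\psi$ satisfies $\psi_*\partial_t=\partial_t$, every conformal map of $g_L$ already preserves $K$; hence the $K$-preserving conformal maps of $g_L$ are simply all of its conformal maps, and they are described precisely as in the statement, once $F$ is identified explicitly.

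The remaining step is the computation of $F$, which is a direct substitution. Reading off \eqref{eq:5} (as recorded in the discussion preceding the statement, where \eqref{eq:condLor} was also checked to follow from \eqref{conditionRie}), the metric $g_L$ is of the form \eqref{def:metrica} with $\beta=\bar\beta$, $\omega=-\bar\omega$ and $g_0=\bar g_0-\frac{2}{\beta}\bar\omega\otimes\bar\omega$. Plugging this data into the Fermat-metric formula \eqref{eq:wsfinsler} gives
\[
F(v)=-\frac{1}{\beta}\bar\omega(v)+\sqrt{\frac{1}{\beta^2}\bar\omega(v)^2+\frac{1}{\beta}\Big(\bar g_0(v,v)-\frac{2}{\beta}\bar\omega(v)^2\Big)}
=-\frac{1}{\beta}\bar\omega(v)+\sqrt{-\frac{1}{\beta^2}\bar\omega(v)^2+\frac{1}{\beta}\bar g_0(v,v)},
\]
which is exactly the pre-Randers metric appearing in the statement. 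Assembling the three observations proves the corollary.

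I do not expect a genuine obstacle, since the content is the assembly of Proposition \ref{conformalmaps}$(iii)$, Theorem \ref{confalmost} and one computation. The only point requiring mild care is the sign bookkeeping in passing from $g_R$ to $g_L$: the construction \eqref{eq:5} reverses the sign of the metric along $K$, so one must track that $\omega=-\bar\omega$ and that the cross term enters $g_0$ with the correct coefficient, ensuring both that the data feeds correctly into \eqref{eq:wsfinsler} and that the radicand remains nonnegative by virtue of \eqref{conditionRie}.
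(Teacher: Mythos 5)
Your proposal is correct and follows exactly the paper's route: the paper proves the corollary by citing Proposition \ref{conformalmaps}$(iii)$ and Theorem \ref{confalmost}, and your substitution of $\beta=\bar\beta$, $\omega=-\bar\omega$, $g_0=\bar g_0-\frac{2}{\beta}\bar\omega\otimes\bar\omega$ into \eqref{eq:wsfinsler} is the same (correct) computation the paper carries out in the discussion preceding Proposition \ref{conformalmaps}. Your write-up merely makes explicit the details the paper leaves implicit.
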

\begin{proof}
It follows from part $(iii)$ of Proposition \ref{conformalmaps} and Theorem \ref{confalmost}.
\end{proof}
Compared this result with \cite[\S 2.3]{Man14}, where Killing isometries are studied under a different point of view.

\section{Applications of the pre-Randers/\som-spacetimes correspondence}\label{s:presom}

As we have already mentioned on \S \ref{sec:almostisometriessplit}, the correspondence between pre-Randers metrics  and \som-spacetimes allows us to obtain interesting results. We will make use particularly of the well-known properties of \som-spacetimes in order to infer properties of the associated pre-Randers space.

For instance, by using the characterization on the existence of a standard splitting (see \cite{JS08}), it follows: 
%

\begin{prop}\label{prop5.1}
	Let $(S,F)$ be a pre-Randers metric. If for every two distinct points $x_0,x_1\in S$ holds $d_{F}(x_0,x_1) >-d_{F}(x_1,x_0)$, then $F$ is almost isometric to a (positive) Randers metric. 
\end{prop}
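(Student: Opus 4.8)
The plan is to realize $(S,F)$ as the Fermat metric of an \som-spacetime, convert the hypothesis into a causality condition, and then read off a positive Randers representative from a standard splitting. First I would use the surjectivity of $\varphi\colon{\rm\som}(\R\times S)\to{\rm pRand}(S)$ observed after Proposition \ref{prop:Fermatmetric}: writing $F(v)=\sqrt{h(v,v)}+\omega(v)$, the data $\beta=1$, the same one-form $\omega$, and $g_0=h-\omega\otimes\omega$ define an \som-spacetime $(\R\times S,g)$ whose Fermat metric \eqref{eq:wsfinsler} is exactly $F$; condition \eqref{eq:condLor} holds because $g_0+\omega\otimes\omega=h$ is Riemannian.

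Next I would translate the hypothesis. The inequality $d_F(x_0,x_1)>-d_F(x_1,x_0)$ for all $x_0\neq x_1$ says precisely that $d_s(x_0,x_1)>0$ for $x_0\neq x_1$, i.e. that $d_s$ is a distance. I would first note that this already rules out the degenerate case: if $d_F\equiv-\infty$ the inequality would read $-\infty>+\infty$, so by parts $(iii)$ and $(iv)$ of Proposition \ref{prop:continuidad} we get $d_F(x,x)=0$ for all $x$ and $d_F$ real-valued. Proposition \ref{prop:disting} then yields that $(\R\times S,g)$ is (past- and future-) distinguishing.

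Now I would invoke the existence-of-standard-splitting criterion, the same input used in Theorem \ref{teo:CSYGH}: by \cite[Theorem 1.2]{JS08} a distinguishing \som-spacetime admits a standard splitting, i.e. there is a slice $S_f$ for which the tensor $g_0^f$ of Proposition \ref{diffsplittings} is Riemannian. I would then check that the Fermat metric $F^f$ of such a standard splitting is a genuine positive Randers metric: in \eqref{eq:wsfinsler} with $g_0^f$ positive definite the radicand strictly exceeds $\tfrac{1}{(\beta^f)^2}(\omega^f)^2(v)$ for $v\neq 0$, so the square-root term dominates $-\tfrac{1}{\beta^f}\omega^f(v)$ and hence $F^f(v)>0$. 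Finally Proposition \ref{diffsplittings} identifies $F^f=F-df$, so $F=F^f+df$ and, by Definition \ref{almostisometric}, $F$ is almost isometric to the positive Randers metric $F^f$, as claimed.

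I expect the only genuine work to be the two verifications flanking the citation, namely that the stated inequality is exactly the distinguishing condition and that a standard splitting produces a positive Fermat/Randers metric, while the substantive implication, distinguishing $\Rightarrow$ standard splitting, is imported wholesale from \cite{JS08}. Thus the main obstacle is conceptual rather than technical: recognizing that the correspondence of \S \ref{sec:almostisometriessplit} lets one replace the analytic question about $d_F$ by the existence of a standard (spacelike-slice) splitting of the associated spacetime.
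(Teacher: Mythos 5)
Your proposal is correct and follows essentially the same route as the paper: translate the hypothesis into $d_s$ being a distance, apply Proposition \ref{prop:disting} to get that the associated \som-spacetime is distinguishing, import the standard splitting from \cite[Theorem 1.2]{JS08}, and conclude via Corollary \ref{cordiffsplittings} that $F$ is almost isometric to the (positive) Randers metric of that splitting. The extra verifications you flag (that the hypothesis excludes $d_F\equiv-\infty$ and that a Riemannian $g_0^f$ forces $F^f>0$) are correct and are left implicit in the paper's terser argument.
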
 

\begin{proof}
	The hypothesis ensures that the corresponding \som-spacetime is distinguishing (Prop. \ref{prop:disting}), and so, causally continuous (Prop. \ref{prop:caucon}). According to \cite[Theorem 1.2]{JS08}, $(M,g)$ admits then a splittting $(\R\times S',g)$ as a standard stationary spacetime with an associated (positive) Randers metric $F'$. Finally, as $\R\times S'$ and $\R\times S$ are splittings of the same spacetime  with the same Killing field $\partial_t$,  Corollary \ref{cordiffsplittings} implies that $F$ and $F'$ are almost isometric.  
\end{proof}

However, the applicability of such a correspondence goes further, including results of physical relevance as we will see in the forthcoming section. Before that, let us give the following general result for pre-Randers spaces which will be used later.

\begin{prop}\label{prop:convex}
	Let $(S,F)$ be a pre-Randers metric. Assume that   $d_{s}$ is a distance  and the balls $B_s(x,r)$ are precompact for all $x\in S$ and $r>0$. Then, 
\begin{enumerate}[(i)]	
\item	$(S,F)$ is convex, i.e., any two points on $S$ are connected by a geodesic of $F$.
\item  $F$ is almost isometric to a complete Randers metric. 
\end{enumerate}
\end{prop}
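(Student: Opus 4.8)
The plan is to pass to the associated \som-spacetime and read off everything from the causal ladder. By the surjectivity of the correspondence $\varphi$, choose $(M,g)=(\R\times S,g)$ with $\beta=1$ and $g_0=h-\omega\otimes\omega$, so that the Fermat metric of $(M,g)$ is exactly the given $F$.

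First I would translate the hypotheses. Since $d_F(x,y)$ is finite for some pair, part $(iv)$ of Proposition \ref{prop:continuidad} rules out $d_F\equiv-\infty$; hence by $(v)$ the map $d_F$ is finite and continuous on all of $S\times S$, and by $(iii)$ we get $d_F(x,x)=0$ for every $x\in S$. In particular $d_s\geq 0$ and $(M,g)$ is chronological (part (C2) of Proposition \ref{prop:escalera1}). Next, the precompactness of the open balls $B_s(x,r)$ upgrades to compactness of the closed balls $\overline{B}_s(x,r)$: for $r'>r$ one has $\overline{B}_s(x,r)\subset B_s(x,r')\subset\overline{B_s(x,r')}$, and $\overline{B}_s(x,r)$ is closed by continuity of $d_s$, hence a closed subset of a compact set. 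Thus the hypotheses are exactly condition $(iv)$ in part $(GH)$ of Theorem \ref{teo:CSYGH}, and I conclude that $(M,g)$ is globally hyperbolic.

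For $(i)$, global hyperbolicity sits above causal simplicity in the causal ladder, so $(M,g)$ is causally simple; by the equivalence $(i)\Leftrightarrow(iv)$ of part $(CS)$ of Theorem \ref{teo:CSYGH} this is precisely the convexity of $(S,F)$, which is assertion $(i)$. For $(ii)$, global hyperbolicity forces $(M,g)$ to be distinguishing, so by \cite[Theorem 1.2]{JS08} it admits a standard stationary splitting $(\R\times S',g)$; the Fermat metric $F'$ of that splitting is a genuine (positive) Randers metric, and since $\R\times S$ and $\R\times S'$ are two splittings of the same spacetime with Killing field $\partial_t$, Corollary \ref{cordiffsplittings} shows that $F$ and $F'$ are almost isometric. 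It then remains to arrange that $F'$ is complete: global hyperbolicity of the standard stationary spacetime is equivalent, by \cite[Theorem 4.3]{CJS11}, to the precompactness of the symmetrized balls of $F'$, and by Lemma \ref{lem:ind} these coincide with the (given, precompact) symmetrized balls of $F$; a positive Randers metric with precompact symmetrized balls is, up to a further almost isometry, geodesically complete. This yields $(ii)$.

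The delicate point is this last step. The symmetrized pre-distance is an almost-isometry invariant (Lemma \ref{lem:ind}), but forward and backward geodesic completeness are \emph{not}, so I cannot simply declare $F'$ complete from the invariant ball condition; I must instead exhibit a complete representative within the almost-isometry class. Concretely, I would use the Cauchy splitting provided by \cite{JS08} together with the precompactness of the $d_s$-balls to run a Finslerian Hopf--Rinow argument: minimizing sequences of connecting curves remain in a fixed compact set and have bounded background length, so Arzel\`a--Ascoli and lower semicontinuity of the length produce minimizing geodesics and geodesic completeness. Making this completeness rigorous, and checking that the chosen representative genuinely has the same pregeodesics as $F$, is where essentially all the work lies; the convexity claim $(i)$, by contrast, is immediate once global hyperbolicity is established.
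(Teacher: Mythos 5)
Your treatment of part (i) and your reduction of part (ii) coincide with the paper's proof: translate the hypotheses into condition (iv) of part (GH) of Theorem \ref{teo:CSYGH} (finiteness of $d_F$ at one pair forces $d_F(x,x)=0$ everywhere via Proposition \ref{prop:continuidad} and Proposition \ref{prop:escalera1}), conclude that the associated \som-spacetime is globally hyperbolic and hence causally simple, read off convexity from the equivalence with (CS)(iv), and then obtain a positive Randers metric $F'$ almost isometric to $F$ from the standard splitting of \cite{JS08} together with Corollary \ref{cordiffsplittings}. All of that is correct and is exactly what the paper does.

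The divergence, and the gap, is in the completeness of the representative $F'$. The paper disposes of this in one line by citing \cite[Theorem 5.10]{CJS11}, which states precisely that a globally hyperbolic standard stationary spacetime admits a splitting (by a Cauchy hypersurface) whose associated Fermat metric is a complete Randers metric; combined with Corollary \ref{cordiffsplittings} that is the whole of part (ii). Your proposed substitute --- Arzel\`a--Ascoli applied to minimizing sequences confined to a compact set --- produces minimizing geodesics between prescribed endpoints, i.e.\ convexity, which you already have from (i); it does not show that geodesics of $F'$ extend to all of $\R$, which is what completeness means. You are right that completeness is not an almost-isometry invariant, so one must make a specific choice of splitting and prove completeness for it; the content of the cited theorem is exactly that the Cauchy splitting works, and establishing it requires controlling the \emph{forward and backward} balls of that particular Fermat metric, not merely the symmetrized balls that Lemma \ref{lem:ind} hands you. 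As written, your proof of (ii) therefore stops short at its final step; the missing piece is a known theorem that could simply be quoted, but the Hopf--Rinow sketch you offer in its place does not prove it.
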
 
\begin{proof}
Observe that  all the hypotheses of part GH $\it (ii)$ of Thm. \ref{teo:CSYGH}  hold,  which implies using the same theorem that the associated \som-spacetime is globally hyperbolic   and so causally simple. Then, from part (CS) $\it (iv)$ of Thm. \ref{teo:CSYGH} we deduce that $(S,F)$ is convex.  For part $\it (ii)$, once we know that there exists a standard splitting of the associated \som-spacetime, apply \cite[Theorem 5.10]{CJS11}. 
\end{proof}
Under the conditions of the last proposition it is also possible to obtain multiplicity results of periodic geodesics as in \cite{BiJav11}.
\begin{thm}\label{thm:mult}
	Let $(S,F)$ be a pre-Randers metric with $S$ compact and  assume that $d_{s}$ is a distance. Then there exists a non-trivial periodic geodesic of $(S,F)$. Moreover, if $S$ satisfies one of the following conditions:
	\begin{enumerate}[(i)]
		\item $S$ satisfies the Gromoll-Meyer condition, namely, $\limsup_{k\rightarrow +\infty}b_k(\Lambda S)=+\infty$, where $\Lambda S$ is the loop-space of $S$ and $b_k(\Lambda S)$ is the $k$-th Betti number of $\Lambda S$,
		\item $\dim S\geq 2$ and the fundamental group of $S$ is infinite abelian,
	\end{enumerate}
then $(S,F)$ admits infinitely many geometrically distinct periodic geodesics.
	\end{thm}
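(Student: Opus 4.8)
The plan is to transfer the problem from pre-Randers metrics to the associated \som-spacetime and then invoke the existing multiplicity theory for periodic geodesics of Finsler metrics. The key observation is that the hypothesis ``all loops with basepoint $x$ have non-negative $F$-length'' is exactly the statement that $d_F(x,x)=0$, which by part (C1) of Proposition \ref{prop:escalera1} guarantees $d_F\not\equiv-\infty$ and hence (by continuity, Proposition \ref{prop:continuidad}) that $d_F$ is finite and $d_F(y,y)=0$ for all $y\in S$; the spacetime is therefore chronological. Since $S$ is compact, the symmetrized balls $B_s(x,r)$ are automatically precompact, so I would apply Proposition \ref{prop:convex} to conclude that $F$ is almost isometric to a complete Randers metric $F'$.

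The crucial point is that almost isometric pre-Randers metrics have the same pregeodesics (they differ by the differential of a function, which does not affect the critical points of the length functional among curves with fixed endpoints, by the identity $\ell_{F-df}(c)=\ell_F(c)+f(x_0)-f(x_1)$ noted after Definition \ref{almostisometric}). In particular $F$ and $F'$ share the same \emph{periodic} pregeodesics: for a closed curve the boundary term $f(x_1)-f(x_0)$ vanishes, so the length functionals on the free loop space $\Lambda S$ coincide up to reparametrization of the critical set. Thus the periodic geodesics of the complete Randers metric $F'$ give, after reparametrization, periodic pregeodesics of $F$, and conversely. I would make this correspondence precise, noting that affine parametrization can be recovered since the pregeodesics of $F'$ are genuine Finsler geodesics.

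Having reduced to the genuine Finsler case, the existence of one non-trivial periodic geodesic on the compact manifold $(S,F')$ follows from the classical Lyusternik--Fet theorem, and the multiplicity statements under conditions (i) and (ii) follow respectively from the Gromoll--Meyer theorem and from the Bangert--Hingston result, both of which have been established for Finsler metrics (this is precisely the content cited from \cite{BiJav11}). Completeness of $F'$, supplied by Proposition \ref{prop:convex}(ii), is what makes these topological existence theorems applicable, since it ensures the Palais--Smale condition holds for the energy functional on $\Lambda S$.

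The main obstacle I anticipate is the careful handling of the loop space and the passage between pregeodesics of $F$ and geodesics of $F'$: one must verify that the almost-isometry correspondence respects the free loop space $\Lambda S$ (not merely fixed-endpoint path spaces) and that ``geometrically distinct'' periodic geodesics correspond under the identification, so that the Betti-number growth condition on $\Lambda S$ and the counting of geometrically distinct orbits transfer without loss. Once the identification of closed pregeodesics of $F$ with closed geodesics of the complete Randers metric $F'$ is established, the result is a direct citation of the Finsler multiplicity theorems, so I expect the bulk of the work to lie in justifying this identification rather than in any new variational analysis.
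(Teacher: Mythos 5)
Your proposal is correct and follows essentially the same route as the paper: use the loop hypothesis to get $d_F(x,x)=0$, use compactness of $S$ to get precompact symmetrized balls and hence global hyperbolicity of the associated \som-spacetime, conclude that $F$ is almost isometric to a complete Randers metric $F'$ with the same (periodic) pregeodesics, and then cite Lyusternik--Fet together with the Finsler versions of Gromoll--Meyer and Bangert--Hingston from \cite{BiJav11}. Your additional remark that the boundary term $f(x_1)-f(x_0)$ vanishes on closed curves, so the length functionals agree on the free loop space, is a useful explicit justification of the step the paper states only as ``$F$ and $F'$ have the same pregeodesics.''
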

\begin{proof}
	As $S$ is compact, Theorem \ref{teo:CSYGH} implies that the associated $\som$-spacetime is globally hyperbolic and it admits a standard stationary splitting by \cite{JS08}. Using Corollary \ref{cordiffsplittings}, it follows that $F$ is almost-isometric to a Randers metric $F'$. As $F$ and $F'$ have the same pregeodesics, the existence of a periodic geodesic follows from \cite{FetLyu51} or \cite{Mer77}, and the multiplicity results, from  Theorems 2.2 and 2.4 of \cite{BiJav11}, in all the cases applied to $F'$.
	\end{proof}

 \begin{rem}
Observe that there is no Hopf-Rinow Theorem for pre-Randers metrics, and probably, Proposition \ref{prop:convex}  is  the closest result that can be obtained. In fact, the example of \S \ref{almoconf}  provides  a compact manifold in which the geodesic connectedness  fails.  In this case,  all the balls with $r>0$ coincide with the whole manifold (which is compact) and the pre-Randers metric is geodesically complete.  Moreover, for  the existence of infinitely many closed geodesics on non-compact Riemannian manifolds see the recent paper \cite{AsMa17}. 
 	\end{rem}

    \subsection{Future horizons and Cut Locus}\label{sec:cutlocus}
Our aim in this section is to present a generalization of the results included in \cite[Section 5.4]{CJS11} involving the so-called \textit{cut locus} of a Randers manifold $(S,R)$ to the context of pre-Randers metrics. In \cite{CJS11}, the authors make use of the relation of Randers metrics and standard stationary spacetimes to characterize the null hypersurface associated with the past of a closed set $\left\{ 0 \right\}\times C\subset \left\{ 0 \right\}\times S$ as the graph of a function involving the distance function associated with the Randers metric.


\medskip

Let us consider $(S,F)$ a pre-Randers manifold. In analogy with \cite[Section 5.4]{CJS11}, let $C\subset S$ be a closed subset of $S$, and define a function $ \rho_{C}:S\setminus C \rightarrow \R$ where $\rho_{C}(x)$ denotes the infimum of $F$-lengths of smooth curves from $x$ to $C$ (for convenience, here we adopt the opposite order to the one used in \cite[\S 5.4]{CJS11}  to define $\rho_C(x)$, where it is defined as infimum of the distances from $C$ to $x$). As happens for Randers metrics, such a function is Lipschitz since $\left| \rho_{C}(x)-\rho_{C}(y) \right|\leq 2 d_{s}(x,y) \leq 2 d_h(x,y)$, where $d_h$ is the distance associated with the Riemannian metric $h$ used in the definition of $F$.  Up to  the last inequalities, take into account that $d_s\geq 0$, or it is identically $-\infty$, and in such a case, $\rho_C=-\infty$, see Proposition \ref{prop:escalera1}. For the second inequality, see Lemma \ref{lem:harris2}. 

The previous function allows us to define the concept of \textit{C-minimizing} segment. However, due to the particularities of pre-Finsler metrics (for instance, a non constant curve could have zero length), we cannot consider the same definition as in \cite[Section 5.4]{CJS11}: we say that a curve $\gamma:I\subset [0,\infty)\rightarrow S$ is C-minimizing if given $a,b\in I$ with $a<b$ it follows that:

\begin{equation}
\rho_{C}(\gamma(a))=\ell_{F}(\gamma)|_{[a,b]} + \rho_{C}(\gamma(b)).\label{cut:eq3}
\end{equation}
Let us prove the following result about the existence of C-minimizing segments (compare with \cite[Proposition 5.11]{CJS11}:

\begin{prop}
	\label{thm:1}
	Every $p\in S\setminus C$ lies on at least one C-minimizing segment {(and so, a geodesic on the pre-Randers manifold)} which arrives to $C$ or it is forward inextendible.
\end{prop}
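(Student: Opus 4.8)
The plan is to transfer the statement to the associated \som-spacetime and to read it off the structure of a suitable achronal boundary. I first dispose of the degenerate case: if $d_s\equiv-\infty$ then $\rho_{C}\equiv-\infty$ and there is nothing to prove, so by part $(C2)$ of Proposition \ref{prop:escalera1} I may assume $d_s\geq 0$, in which case $\rho_{C}$ is a finite, continuous (Lipschitz) function and $\rho_{C}(x)=\inf_{c\in C}d_{F}(x,c)$. Let $(\R\times S,g)$ be the \som-spacetime whose Fermat metric is $F$ and consider the closed set $\{0\}\times C$. By the characterization of the chronological relation in Proposition \ref{teo:caracrel}, a point $(t,x)$ belongs to $I^-(\{0\}\times C)$ if and only if $d_{F}(x,c)<-t$ for some $c\in C$, i.e.\ if and only if $t<-\rho_{C}(x)$. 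Hence
\[
I^-(\{0\}\times C)=\{(t,x)\in\R\times S:\ t<-\rho_{C}(x)\},
\]
whose boundary is the graph $\mathcal{H}=\{(t,x):\ t=-\rho_{C}(x)\}$ of the Lipschitz function $-\rho_{C}$.

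Next I would invoke the theory of future horizons. Since $\mathcal{H}=\partial I^-(\{0\}\times C)$ is an achronal boundary, the Penrose--Hawking--Ellis description together with the refinements of Beem--Krolak \cite{BK98} and Chrusciel--Galloway \cite{CFGH02} yields, through any point $\hat p=(-\rho_{C}(p),p)\in\mathcal{H}\setminus(\{0\}\times C)$, a lightlike geodesic generator $\tilde\gamma$ of $g$ that is contained in $\mathcal{H}$ and that, followed towards the future, either has a future endpoint on $\{0\}\times C$ or is future-inextendible. Writing $\tilde\gamma(s)=(\tau(s),\gamma(s))$ with $\gamma(a)=p$, this $\gamma$ is the candidate $C$-minimizing segment through $p$.

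It then remains to verify that $\gamma$ has the required properties. By Proposition \ref{prop:Fermatmetric} the lightlike geodesic $\tilde\gamma$ projects to a pregeodesic $\gamma$ of $F$, hence a geodesic of the pre-Randers manifold. Since $\tilde\gamma\subset\mathcal{H}$ we have $\tau(s)=-\rho_{C}(\gamma(s))$ for every $s$, while nullity and the future condition give $\dot\tau(s)=F(\dot\gamma(s))$ by part $(i)$ of Proposition \ref{teo:caracrel} and \eqref{Flength}. Integrating for $a<b$,
\[
\rho_{C}(\gamma(a))-\rho_{C}(\gamma(b))=\tau(b)-\tau(a)=\int_a^bF(\dot\gamma)\,ds=\ell_{F}(\gamma)|_{[a,b]},
\]
which is exactly the $C$-minimizing identity \eqref{cut:eq3}. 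As the forward parametrization of $\gamma$ corresponds to the future direction of $\tilde\gamma$, the dichotomy for $\tilde\gamma$ translates into: $\gamma$ either arrives at $C$ or is forward-inextendible.

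The main obstacle is the legitimacy of the horizon machinery in this weaker causal setting. Unlike the standard stationary spacetimes of \cite[Proposition 5.11]{CJS11}, an \som-spacetime need not even be distinguishing, so one has to confirm that $\{0\}\times C$ closed already makes $\mathcal{H}$ a genuine $C^0$ achronal hypersurface to which \cite{BK98,CFGH02} apply; this is where the reduction to $d_s\geq 0$ is used. A second delicate point is that, $F$ being genuinely pre-Randers, $t$ need not be monotone along future-directed null curves (one may have $F(\dot\gamma)<0$), so one must keep careful track of the identification of ``future along $\tilde\gamma$'' with ``forward along $\gamma$''. Finally, since strong causality may fail, a future-inextendible generator could in principle be imprisoned in a compact set; one therefore has to interpret ``forward-inextendible'' at the level of the segment $\gamma$ (equivalently, of the generator $\tilde\gamma$) rather than as ``$\gamma$ leaves every compact set of $S$''.
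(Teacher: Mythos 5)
Your proposal is correct in substance and ends up with the same object as the paper --- a future-directed null geodesic contained in $\partial I^-(\{0\}\times C)$ whose projection is the desired $C$-minimizing segment --- but it reaches it by a different route. The paper does not quote any horizon theorem: it takes a minimizing sequence of curves $x_n$ from $p$ to $C$, lifts each to the null curve $(t_n,x_n)$ with $t_n(s)=\ell_F(x_n|_{[0,s]})$, extends these to be future-inextendible by concatenating with integral curves of $\partial_t$, applies the limit-curve lemma of Beem--Ehrlich--Easley, and then derives the key inequality $\rho_C(x(b))\le \rho_C(p)-t(b)$ from the openness of $\ll$ and \eqref{eq:characpasado}; the minimizing identity \eqref{cut:eq3} then follows from the triangle inequality. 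You instead identify $\partial I^-(\{0\}\times C)$ with the graph of $-\rho_C$, invoke the structure theorem for achronal boundaries to get a generator through $(-\rho_C(p),p)$, and read off \eqref{cut:eq3} from $\tau=-\rho_C\circ\gamma$ together with $\dot\tau=F(\dot\gamma)$; that verification is clean and is in fact exactly the computation the paper carries out in reverse. The two arguments are the same limit-curve mechanism in different packaging: yours is shorter modulo the quoted theorem and ties in directly with the horizon discussion that follows, while the paper's is self-contained and makes explicit why a minimizing sequence converges to a minimizer. Three caveats. First, your ``main obstacle'' is not where you locate it: the theorem that every point of $\partial I^-(A)\setminus\overline{A}$ lies on a null geodesic contained in $\partial I^-(A)$ which is either future-inextendible or has future endpoint on $\overline{A}$ holds in an arbitrary spacetime, with no causality hypothesis, so the reduction to $d_s\ge 0$ is not what legitimizes it; what you must avoid is citing the paper's own later claim that $\mathcal{H}$ is future null geodesically ruled, since in the paper that claim is a \emph{consequence} of this proposition --- cite the general achronal-boundary theorem instead, not \cite{BK98,CFGH02}, which are needed only later for Theorem \ref{thm:2}. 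Second, in the degenerate case $d_F\equiv-\infty$ one has $I^-(\{0\}\times C)=M$, the boundary is empty and your construction produces nothing; \eqref{cut:eq3} is then vacuous, but the statement still asks for a geodesic through $p$, which you should exhibit (e.g.\ as the projection of a lightlike geodesic of $g$, Proposition \ref{prop:Fermatmetric}). Third, you should truncate the generator at its first arrival on $\{0\}\times C$ so that the projection genuinely ``arrives to $C$'' or is forward-inextendible, matching the statement.
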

\begin{proof}
	Observe that it is not possible to extend the proof in \cite[Proposition 9]{CFGH02}, because the spheres of a pre-Randers metric can be empty for every $r>0$ (recall the example in \S \ref{almoconf}). We will proceed using an associated \som-spacetime $(\R\times S,g)$ with the pre-Randers metric (recall the paragraph after \eqref{preRanders}). Observe that given a curve $x:[a,b]\subset\R\rightarrow S$, we can consider a causal curve in $(\R\times S,g)$ defined as $\gamma=(t,x):I\rightarrow \R\times S$ such that $t(s)=\ell_F(x|_{[a,s]})=\int_a^s F(\dot x) d\mu$ (see part $(i)$ of Proposition \ref{teo:caracrel}). Assume that $x_n:[0,1]\rightarrow S$ is a sequence of curves from $p$ to $C$ such that $\lim_n \ell_F(x_n)=\rho_C(p)$. Consider the sequence of causal curves, $\gamma_n=(t_n,x_n):[0,1]\rightarrow \R\times S$ with $t_n(s)=\ell_F(x_n|_{[0,s]})$ and concatenate $\gamma_n$ with the timelike curve $s\ni [1,+\infty)\rightarrow (s,x_n(1))\in\R\times S$. Then choose a future-inextendible causal limit curve $\gamma=(t,x):I=\rightarrow\R\times S$ of the resulting future inextendible sequence of curves with $I$ an interval closed on the left with zero as the left endpoint (for the existence of this limit curve see \cite[Prop. 3.31]{beem1996global}). We claim that $x:I\rightarrow S$ is a C-minimizing segment departing from $p$. If $b\in I$, then there exists a subsequence of the form $\{\gamma_{n_k}(s_k)\}_k$ converging to $\gamma(b)=(t(b),x(b))$. Observe that $(t(b)-\varepsilon,x(b))$, with $\varepsilon>0$, is chronologically related to $\gamma(b)$ as there is a timelike curve, $s\ni [-\varepsilon,0]\rightarrow (s+t(b),x(b))$ joining them. As the chronological future is open, there exists $k_0\in\N$ such that $(t(b)-\varepsilon,x(b))$ is chronologically related to $\gamma_{n_k}(s_k)$ for $k\geq k_0$. Then $(t(b)-\varepsilon,x(b))$ is chronologically  related to $\gamma_{n_k}(1)$ for every $k\geq k_0$ (use \cite[Proposition 10.46]{Oneill83}). This implies, using \eqref{eq:characpasado}, that $d(x(b),x_{n_k}(1))<t_{n_k}(1)-t(b)+\varepsilon$ for $k\geq k_0$ and every $\varepsilon>0$. As, by hypothesis, $x_n(1)\in C$ and $\lim_nt_n(1)=\rho_C(p)$, it follows that 
	\begin{equation}\label{keyineq}
	\rho_C(x(b))\leq \rho_C(p)-t(b). \end{equation}
	On the other hand, as $\gamma$ is a causal curve, we have that $\ell_F(x|_{[0,b]})\leq t(b)$ (using part $(ii)$ of Proposition \ref{teo:caracrel}) and then $\rho_C(p)\leq \ell_F(x|_{[0,b]})+\rho_C(x(b))\leq   t(b)+\rho_C(x(b))$. Putting together last inequality and \eqref{keyineq}, it follows that
\[\rho_C(p)\leq \ell_F(x|_{[0,b]})+\rho_C(x(b))\leq   t(b)+\rho_C(x(b))\leq \rho_C(p).\]
Therefore, $\rho_C(p)= \ell_F(x|_{[0,b]})+\rho_C(x(b))$, and $x$ is a $C$-minimizing segment which is forward  inextendible as $\gamma$ is future inextendible. Finally, in order to show that $\gamma$ is C-minimizing, let us take $a,b\in I$ with $a<b$. Then, from the previous equality used for both, $a$ and $b$,
  \[
\rho_{C}(p)=\ell_{F}(x|_{[0,b]})+\rho_{C}(x(b))=\ell_{F}(x|_{[0,a]})+\rho_{C}(x(a)) ,
  \]
and from the second equality $\rho_{C}(x(a)) =\ell_{F}(x|_{[a,b]})+\rho_{C}(x(b))$.

	\end{proof}
Let us assume that all the C-minimizing segments are defined in their maximal domain. A C-minimizing segment has a \textit{cut point} if its interval of definition is of the form $[a,b]$ or $[a,b)$ with $a>-\infty$, being $p=\gamma(a)$ its cut point. We will define the \textit{cut locus} of $C$ on $S$, denoted by $Cut_{C}$, as the set of all cut points. For any $x\in S\setminus C$, we will also denote by $N_{C}(p)$ the number of C-minimizing segments passing through $p$. It follows from the previous proposition that $N_{C}(p)\geq 1$  and, it is not difficult to see that if $N_{C}(p)\geq 2$ then $p\in Cut_{C}$.

\medskip

The function $\rho_{C}$ and the C-minimizing segments have very natural interpretations on the associated \som-spacetime $(M,g)$, with $M=\R\times S$. For simplicity, let us make the identification $C\equiv \left\{ 0 \right\}\times C\subset \mathbb{R}\times S$ and consider $I^{-}(C)$. Given a point $x\in S\setminus C$,  $(t,x)\in I^{-}(C)$ for all $t<-\rho_{C}(x)$, and so, it follows that $(-\rho_{C}(x),x)\in \overline{I^{-}(C)}\setminus I^{-}(C)$. Moreover, if $\gamma$ is a C-minimizing segment, then $s\rightarrow (-\rho_{C}(\gamma(s)),\gamma(s))$ is a lightlike pre-geodesic on $(M,g)$.


Let us denote by $\mathcal{H}$ the graph of $-\rho_{C}$, i.e.,
\begin{equation}
\label{cut:eq1}
\mathcal{H}=\left\{ (-\rho_{C}(x),x):x\in S\setminus C \right\}.
\end{equation}
Observe that, from construction,  $\mathcal{H}$ is achronal. In fact, if $(-\rho_{C}(x),x)\ll (-\rho_{C}(y),y)$, then by Proposition \ref{teo:caracrel}:

\[
d_{F}(x,y)< -\rho_{C}(y)+\rho_{C}(x)\Rightarrow d_{F}(x,y)+d_{F}(y,C)< d_{F}(x,C),
  \] which contradicts the triangle inequality.

Moreover, defined in this way, $\mathcal{H}$ defines a \textit{future horizon} of the spacetime $(\R\times (S\setminus C),g)$ obtained as an open region of $(M,g)$, i.e., $\mathcal{H}$ is an achronal, closed future null geodesically ruled topological hypersurface (see \cite{CFGH02} and references therein). Recall that \textit{future null geodesically ruled} means that each point $p\in \mathcal{H}$ belongs to a \textit{null generator} of $\mathcal{H}$, i.e., a future inextendible lightlike geodesic. Observe that we have considered the open subset $\R\times (S\setminus C)$, removing $\R\times C$, in order to make the null generators future inextendible. It follows that the number of C-minimizing segments passing through $x\in S\setminus C$ coincides with the number of null generators of $\mathcal H$ passing through $(-\rho_C(x),x)$ and $\mathcal{H}_{\rm end}$ (the set of past endpoints of the null generators of $\mathcal{H}$) coincides with $\{(-\rho_C(x),x):x\in Cut_C\}$. Then, using results of Beem and Krolak  \cite[Theorem 3.5]{BK98} and Chrusciel and Galloway \cite[Prop. 3.4]{CG98}, we get the following.

\smallskip

\begin{thm}
	\label{thm:2} 
	Let $(S,R)$ be a pre-Randers manifold, and $C\subset S$ a closed subset. A point $p\in S\setminus C$ is a differentiable point of the distance function $\rho_{C}$ if, and only if, it is crossed by exactly one minimizing segment, i.e., $N_{C}(p)=1$.
\end{thm}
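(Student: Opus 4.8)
The plan is to convert the question about the distance function $\rho_C$ into a regularity question for the future horizon $\mathcal{H}$ defined in \eqref{cut:eq1}, and then feed the latter into the cited structural results for horizons. The starting observation is that $\mathcal{H}$ is exactly the graph of the function $-\rho_C$ over $S\setminus C$ inside the product $\R\times(S\setminus C)$. Since the Killing field $\partial_t$ is timelike while $\mathcal{H}$ is achronal, $\partial_t$ is everywhere transverse to $\mathcal{H}$; hence in the product splitting the horizon is globally a graph in the $t$-direction, and a well-defined tangent hyperplane to $\mathcal{H}$ at a point $q=(-\rho_C(p),p)$ exists if and only if the height function $-\rho_C$ is differentiable at $p$. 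In short, the differentiable points of the hypersurface $\mathcal{H}$ project precisely onto the differentiable points of $\rho_C$.

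Next I would invoke the structure of future horizons. By \cite[Theorem 3.5]{BK98} and \cite[Prop. 3.4]{CG98}, a point $q\in\mathcal{H}$ is a differentiable point of the horizon if and only if there is exactly one null generator of $\mathcal{H}$ through $q$. Combined with the previous step, this gives that $\rho_C$ is differentiable at $p$ if and only if a single null generator passes through $q=(-\rho_C(p),p)$. The final ingredient is the dictionary already established above: the number of null generators of $\mathcal{H}$ through $(-\rho_C(p),p)$ equals the number $N_C(p)$ of $C$-minimizing segments through $p$, because every $C$-minimizing segment $\gamma$ lifts to the lightlike pre-geodesic $s\mapsto(-\rho_C(\gamma(s)),\gamma(s))$ (using the characterization in Proposition \ref{teo:caracrel}), and conversely each null generator projects to a $C$-minimizing segment. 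Hence uniqueness of the generator through $q$ is equivalent to $N_C(p)=1$, which is the assertion. Throughout one assumes $\rho_C$ is finite, i.e. $d_s\ge 0$, so that the associated \som-spacetime is chronological and $\mathcal{H}$ is a genuine horizon; when $d_s\equiv-\infty$ the statement is vacuous since $\rho_C\equiv-\infty$ (recall Proposition \ref{prop:escalera1}).

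The main obstacle is the first step, namely making the equivalence between ordinary differentiability of $\rho_C$ and differentiability of $\mathcal{H}$ as a horizon fully rigorous. In the classical situation a future horizon is a Lipschitz (indeed semi-convex) graph over a \emph{spacelike} hypersurface, and the equivalence is standard; here, however, the slices $\{t\}\times S$ need not be spacelike, so one must verify that the timelike transversality of $\partial_t$ still permits reading off differentiability of the hypersurface from differentiability of the height function $-\rho_C$, and that the notion of ``differentiable point'' used in \cite{BK98,CG98} coincides with the one obtained from this graph representation. Once this identification is secured, the remainder is a direct translation through the generator/segment correspondence, and the equivalence follows immediately in both directions.
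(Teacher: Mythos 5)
Your proposal follows essentially the same route as the paper: the paper's argument is precisely to realize the graph $\mathcal{H}$ of $-\rho_C$ as a future horizon of $(\R\times(S\setminus C),g)$, identify $C$-minimizing segments with null generators of $\mathcal{H}$, and then invoke \cite[Theorem 3.5]{BK98} and \cite[Prop.~3.4]{CG98} to equate differentiability points with points crossed by a single generator. The transversality issue you flag (slices not spacelike) is handled implicitly in the paper via the same graph/Lipschitz observation you describe, so your argument matches the intended proof.
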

 Recall that $\rho_C$ is Lipschitz, satisfying in particular that $\left|\rho_C(x) -\rho_C(y)\right|\leq 2d_h(x,y)$ for all $x,y\in S\setminus C$, with $h$ the Riemannian metric in \eqref{preRanders}, and by Rademacher's Theorem is smooth almost everywhere. Moreover, 
 \[d_h(x,y)\leq d_g((-\rho_C(x),x),(-\rho_C(y),y))\leq |\rho_C(x)-\rho_C(y)|+d_h(x,y)\leq 3 d_h(x,y)\] for every $x,y\in S\setminus C$, with $d_g$ the distance associated with the Riemannian metric $g=dt^2+h$ on $\R\times S$. Taking into account these facts and \cite[Theorem 1]{CFGH02},  we obtain the following: 
\begin{cor}
	Given a closed subset $C$ in a pre-Randers manifold $(S,F)$, the subset $Cut_C$ has  $n$-dimensional Hausdorff measure equal to zero, ${\mathfrak h}^n(Cut_C)=0$.
	\end{cor}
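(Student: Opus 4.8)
The plan is to transfer the result from the associated \som-spacetime, where the geometry of future horizons is well understood, back to the pre-Randers manifold $(S,F)$. The statement to prove is that $\mathfrak{h}^n(Cut_C)=0$, and the key observation is that $Cut_C$ has already been identified, in the discussion preceding the corollary, with the projection of $\mathcal{H}_{\mathrm{end}}$, the set of past endpoints of the null generators of the future horizon $\mathcal{H}\subset\R\times(S\setminus C)$. Thus the natural approach is to apply the known horizon result \cite[Theorem 1]{CFGH02}, which bounds the Hausdorff measure of $\mathcal{H}_{\mathrm{end}}$, and then pull that bound down to $S$ through the graph map $x\mapsto(-\rho_C(x),x)$.

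First I would make precise the graph map $\Phi:S\setminus C\to\mathcal{H}$, $\Phi(x)=(-\rho_C(x),x)$, and record that by construction $\Phi$ is a bijection onto $\mathcal{H}$ carrying $Cut_C$ onto $\mathcal{H}_{\mathrm{end}}$. Next I would invoke the bi-Lipschitz estimate already displayed just before the corollary, namely
\[
d_h(x,y)\leq d_g(\Phi(x),\Phi(y))\leq 3\,d_h(x,y),
\]
valid for all $x,y\in S\setminus C$, where $d_g$ is the distance of $g=dt^2+h$ on $\R\times S$. This chain of inequalities says precisely that $\Phi$ is bi-Lipschitz between $(S\setminus C,d_h)$ and $(\mathcal{H},d_g)$. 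Since bi-Lipschitz maps distort $n$-dimensional Hausdorff measure by at most a constant factor in each direction, one has $\mathfrak{h}^n(Cut_C)=0$ if and only if $\mathfrak{h}^n(\mathcal{H}_{\mathrm{end}})=0$, the Hausdorff measures being computed with $d_h$ and $d_g$ respectively. Then it remains only to cite \cite[Theorem 1]{CFGH02}, which asserts that the set of past endpoints of the null generators of a future horizon has vanishing $n$-dimensional Hausdorff measure, to conclude $\mathfrak{h}^n(\mathcal{H}_{\mathrm{end}})=0$ and hence $\mathfrak{h}^n(Cut_C)=0$.

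The main obstacle, and the only genuinely non-routine point, is verifying that the hypotheses of \cite[Theorem 1]{CFGH02} are actually met by $\mathcal{H}$. One must confirm that $\mathcal{H}$ is a future horizon in the exact sense required there, that is, an achronal, closed, future-null-geodesically-ruled topological hypersurface; the preceding discussion already establishes achronality (via the triangle inequality for $d_F$), the ruling by null generators (via Proposition \ref{thm:1} and Proposition \ref{teo:caracrel}), and the topological-hypersurface property (since $\rho_C$ is Lipschitz, its graph is a Lipschitz, hence topological, hypersurface). A subtle caveat is that these results presuppose $d_F\not\equiv-\infty$, so that $\rho_C$ is real-valued and $\mathcal{H}$ is well-defined; if $d_F\equiv-\infty$ then $\rho_C\equiv-\infty$ and $Cut_C$ is empty, making the statement vacuously true, so I would dispatch that degenerate case at the outset. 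With the horizon hypotheses in hand, the measure-zero conclusion is immediate from the cited theorem, and the bi-Lipschitz transfer does the rest.
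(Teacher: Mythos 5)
Your proposal is correct and follows essentially the same route as the paper: the paper's argument is precisely to identify $Cut_C$ with $\mathcal{H}_{\rm end}$ via the graph of $-\rho_C$, use the displayed bi-Lipschitz estimate between $d_h$ and $d_g$ to transfer Hausdorff measure, and invoke \cite[Theorem 1]{CFGH02} for the horizon $\mathcal{H}$. Your extra remarks (verifying the horizon hypotheses and dispatching the degenerate case $d_F\equiv-\infty$) are consistent with the discussion the paper gives just before the corollary.
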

%

\subsection{Connectivity of magnetic geodesics}\label{magnetic}
Our aim in this section is to give another application of the correspondence between \som-spacetimes and pre-Randers metrics on a physically relevant situation. Concretely, we will obtain a result on connectivity by magnetic geodesics and existence of periodic  orbits. 

Let $S$ be a  manifold and $\pi:TS\rightarrow S$ the canonical projection from the tangent bundle to $S$. A pair $(\mathfrak{g},\Omega)$, where $\mathfrak{g}$ is a Riemannian metric and $\Omega=-d\omega$ is  an exact  $2$-form on $S$, will be called a {\em magnetic structure} on $S$. We will define a {\em magnetic geodesic} associated with the magnetic structure as a curve $\gamma$ satisfying	
\begin{equation}\label{ELeq}
\frac{D\dot{\gamma}}{dt}=Y_{\gamma}(\dot{\gamma}),
\end{equation}
where $D/dt$ denotes the covariant derivative associated with $\mathfrak{g}$ and $Y_{p}:T_pS\rightarrow T_pS$ is determined by
\[
\Omega_p(u,v)=\mathfrak{g}(Y_p(u),v)
\] 
for all $u,v\in T_pS$. In this case, $\mathfrak{g}(\dot{\gamma},\dot{\gamma})$ is constant and $\frac{1}{2}\mathfrak{g}(\dot{\gamma},\dot{\gamma})$ is called the energy of $\gamma$. Magnetic geodesics model the trajectories of a charged particle of unit mass under the effect of a magnetic field whose Lorentz force is given by $Y$.

Following Ma\~ne's approach \cite{Mane97}, the magnetic geodesics can be presented as critical points of the lagrangian $L$ defined over $S$ and given by
\begin{equation}\label{eq:lag} 
L(v)=\frac{1}{2}\mathfrak{g}(v,v)+\omega(v).
\end{equation}
Let us see the relation between magnetic geodesics and geodesics of pre-Randers metrics. 
\begin{prop}\label{magranders}
Let $(\mathfrak{g}, \Omega)$ be a magnetic structure in a manifold $S$ being $\Omega=-d\omega$ for some one-form $\omega$ on $S$. Then a curve $\gamma:[a,b]\rightarrow S$ is a magnetic geodesic of energy $c>0$  if and only if it is a pre-geodesic of the pre-Randers metric defined by
\begin{equation}\label{Fc}
F_c(v)=\sqrt{\mathfrak{g}(v,v)}+\frac{1}{\sqrt{2c}}\omega(v),
\end{equation}
for every $v\in T S$, parametrized with $\mathfrak{g}(\dot\gamma,\dot\gamma)=2c$.
 
\end{prop}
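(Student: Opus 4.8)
The plan is to prove the equivalence by a direct first-variation (Maupertuis-type) computation, reducing everything to the statement that, for a curve $\gamma$ parametrized with $\tilde{\mathfrak g}(\dot\gamma,\dot\gamma)=2c$, being a pregeodesic of $F_c$ (in the sense of Definition \ref{def:distance}) is the same as solving the magnetic geodesic equation \eqref{ELeq}. Two preliminary observations make this reduction legitimate. First, a magnetic geodesic of energy $c$ automatically satisfies $\tilde{\mathfrak g}(\dot\gamma,\dot\gamma)\equiv 2c$, since $\tfrac12\tilde{\mathfrak g}(\dot\gamma,\dot\gamma)$ is conserved along solutions of \eqref{ELeq} (equivalently, along critical points of $\mathcal F$ in \eqref{eq:functional}), as one checks from $\tfrac{d}{ds}\tilde{\mathfrak g}(\dot\gamma,\dot\gamma)=2\tilde{\mathfrak g}(Y_\gamma(\dot\gamma),\dot\gamma)=2\tilde\Omega(\dot\gamma,\dot\gamma)=0$. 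Second, $\ell_{F_c}$ is invariant under orientation-preserving reparametrizations, so a pregeodesic may always be re-parametrized to have constant $\tilde{\mathfrak g}$-speed $\sqrt{2c}$ without affecting its being critical. Since $\tilde{\mathfrak g}$ is Riemannian and $2c>0$, we have $\dot\gamma\neq 0$ all along such a curve, so $F_c$ is smooth along $\dot\gamma$ and the first variation is well defined despite the possible degeneracy of $F_c$ elsewhere.

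With this reduction I would compute the first variation of $\ell_{F_c}(\gamma)=\int_a^b\bigl(\sqrt{\tilde{\mathfrak g}(\dot\gamma,\dot\gamma)}+\tfrac{1}{\sqrt{2c}}\,\omega(\dot\gamma)\bigr)\,ds$ under a fixed-endpoint variation with variation field $V$. The Riemannian-length term is handled by the classical first variation of arc length: at constant speed $\tilde{\mathfrak g}(\dot\gamma,\dot\gamma)=2c$ it contributes
\[
-\frac{1}{\sqrt{2c}}\int_a^b \tilde{\mathfrak g}\!\left(\frac{D\dot\gamma}{ds},V\right)ds .
\]
For the linear term $\tfrac{1}{\sqrt{2c}}\int_\gamma\omega$, an integration by parts (Cartan's formula/Stokes) shows its first variation equals $\tfrac{1}{\sqrt{2c}}\int_a^b d\omega(V,\dot\gamma)\,ds$; using $\tilde\Omega=-d\omega$, the antisymmetry of $\tilde\Omega$, and the defining relation $\tilde\Omega(u,v)=\tilde{\mathfrak g}(Y(u),v)$, this becomes $\tfrac{1}{\sqrt{2c}}\int_a^b \tilde{\mathfrak g}(Y_\gamma(\dot\gamma),V)\,ds$. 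Adding the two contributions gives
\[
\delta\ell_{F_c}(V)=\frac{1}{\sqrt{2c}}\int_a^b \tilde{\mathfrak g}\!\left(Y_\gamma(\dot\gamma)-\frac{D\dot\gamma}{ds},\,V\right)ds ,
\]
which vanishes for every admissible $V$ if and only if $\frac{D\dot\gamma}{ds}=Y_\gamma(\dot\gamma)$, that is, exactly \eqref{ELeq}.

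This single identity yields both implications. If $\gamma$ is a magnetic geodesic of energy $c$, then by the first observation it has $\tilde{\mathfrak g}$-speed $\sqrt{2c}$, so the computation applies verbatim and $\delta\ell_{F_c}(V)=0$ for all $V$, making $\gamma$ a critical point of $\ell_{F_c}$, i.e. a pregeodesic of $F_c$. Conversely, if $\gamma$ is a pregeodesic of $F_c$, I would first reparametrize it to constant speed $\tilde{\mathfrak g}(\dot\gamma,\dot\gamma)=2c$; the identity then forces $\frac{D\dot\gamma}{ds}=Y_\gamma(\dot\gamma)$, so $\gamma$ is a magnetic geodesic, and its energy is $\tfrac12\tilde{\mathfrak g}(\dot\gamma,\dot\gamma)=c$.

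I expect the only real subtlety---the main obstacle---to lie in the bookkeeping around the parametrization: the clean expression for the first variation of $\sqrt{\tilde{\mathfrak g}(\dot\gamma,\dot\gamma)}$ in terms of $D\dot\gamma/ds$ holds only at constant $\tilde{\mathfrak g}$-speed, and one must invoke the reparametrization invariance of $\ell_{F_c}$ (together with the fact that variations tangent to $\dot\gamma$ contribute nothing, consistent with $\tilde{\mathfrak g}(Y_\gamma(\dot\gamma),\dot\gamma)=\tilde\Omega(\dot\gamma,\dot\gamma)=0$) to legitimately fix this parametrization. As an independent sanity check one may note the Maupertuis inequality $\sqrt{2c}\,\sqrt{\tilde{\mathfrak g}(\dot\gamma,\dot\gamma)}\le \tfrac12\tilde{\mathfrak g}(\dot\gamma,\dot\gamma)+c$, with equality precisely when $\tilde{\mathfrak g}(\dot\gamma,\dot\gamma)=2c$; integrating it relates $\ell_{\sqrt{2c}\,F_c}$ to the action $\mathcal F$ of \eqref{eq:functional} (recall that $\sqrt{2c}\,F_c$ has the same pregeodesics as $F_c$) and confirms that the relevant critical points occur exactly on the energy level $c$.
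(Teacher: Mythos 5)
Your proposal is correct and follows essentially the same route as the paper: both derive the Euler--Lagrange equation of the length functional $\ell_{F_c}$ (the paper states it in the reparametrization-invariant form $\frac{D}{dt}\bigl(\dot\gamma/\sqrt{\tilde{\mathfrak g}(\dot\gamma,\dot\gamma)}\bigr)=\frac{1}{\sqrt{2c}}Y_\gamma(\dot\gamma)$, you evaluate it directly at the constant-speed parametrization $\tilde{\mathfrak g}(\dot\gamma,\dot\gamma)=2c$) and identify it with the magnetic equation \eqref{ELeq}. Your explicit first-variation computation and the remarks on energy conservation and reparametrization invariance merely flesh out what the paper calls ``straightforward.''
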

\begin{proof}
Observe that the Euler-Lagrange equations of the length functional of $F_c$ are given by
$\frac{D}{dt}\left(\frac{\dot{\gamma}}{\sqrt{\mathfrak{g}(\dot{\gamma},\dot{\gamma})}}\right)=\frac{1}{\sqrt{2c}}Y_{\gamma}(\dot{\gamma})$. As the last equation is invariant under reparametrization, the result follows straightforwardly by using \eqref{ELeq}.
\end{proof}
 As  a direct consequence of Prop. \ref{prop:convex}, we obtain the following result.

\begin{cor}\label{cor:magcon}
	Let $S$ be a manifold, $(\mathfrak{g},\Omega)$ an exact magnetic structure on it with $\Omega=- d\omega$, and consider  the pre-Randers metric $F_c$ in $S$ given in \eqref{Fc}.  If there exists a closed one-form $\theta$ such that  
	the symmetrized distance $d_{s}$ of $F=F_c+\theta$  is truly a distance and its corresponding balls $B_s(x,r)$  are precompact for all $x\in S$ and $r>0$, then any two points in $S$ are connected by a magnetic geodesic with energy $c$.
\end{cor}

\begin{proof}
	According to Prop. \ref{prop:convex},  the manifold $(S,F)$ is convex,  so for any two points $p,q\in S$ there exists a  geodesic $\gamma$ of $(S,F)$ joining them. As $F$ and $F_c$ have the same pre-geodesics (recall that $\theta$ is closed) then $\gamma$ is a pre-geodesic of $F_c$, and then reparametrizing $\gamma$ with $\mathfrak{g}(\dot\gamma,\dot\gamma)=2c$, we obtain a magnetic geodesic of  $(\mathfrak{g},\Omega)$ by Proposition \ref{magranders}. 
\end{proof}
Observe that if there exists some point $x\in S$ such that all the loops in $S$ with basepoint $x$ have non-negative $F_c$-length, then the stationary spacetime $(\R\times S,g)$ associated with $F_c$  (see the paragraphs after \eqref{preRanders}  for the definition of the spacetime)  is  chronological,  by part $C2$ of Proposition \ref{prop:escalera1}. 
Let us see the relation of this condition with {\em Ma\~ne critical values}. First let us define the so called {\em strict Ma\~ne critical value}:
\[c_0(L)=\inf\{k\in\R : \sup_{\theta \in\Omega^1( S)} \inf_{(\gamma,T)}\mathbb{S}^\theta_k(\gamma,T)\geq 0 \}
=\inf_{\theta \in\Omega^1( S)}\{\inf\{k\in\R:\inf_{(\gamma,T)}\mathbb{S}^\theta_k(\gamma,T)\geq 0 \}\} ,\]
where $\Omega^1( S)$ is the space of closed one-forms on $S$, \[\mathbb{S}^\theta_k(\gamma,T)=\int_0^T(L(\dot\gamma)+\theta(\dot\gamma)+k)dt\] and $\gamma:[0,T]\rightarrow  S$ is a loop with basepoint on $x$.  Observe that as $\inf_{(\gamma,T)}\mathbb{S}^\theta_k(\gamma,T)$ is non-negative or $-\infty$, it follows that the infimum is attached in the above definition of $c_0(L)$, namely, we can replace $\inf$ by $\min$. Moreover, if we define 
\begin{equation}\label{c(L)}
c(L+\theta)=\min\{k\in\R:\inf_{(\gamma,T)}\mathbb{S}^\theta_k(\gamma,T)\geq 0 \},
\end{equation}
with $L$ as in \eqref{eq:lag}, then one obtains that $c_0=\min_{\theta \in\Omega^1( S)} c(L+\theta)$, a classical expression for the Ma\~ne's critical value \cite[Theorem 1.1]{PaPa97}.  On the other hand, the {\em lowest Ma\~ne critical value} $c_u(L)$ can be computed as $c(\tilde{L})$, using \eqref{c(L)} with $\theta=0$ and being $\tilde L$ the lift of the Lagrangian $L$ in \eqref{eq:lag} to the universal covering $\tilde S$ of $S$ \cite[pag. 484]{PaPa97}. In general, it holds that $c_u(L)\leq c_0(L)$. 
 \begin{prop}\label{prop:mane}
Given a  closed  manifold $S$ and an exact magnetic structure $(\mathfrak{g},\Omega)$, with $\Omega=-d\omega$. Then
\begin{enumerate}[(i)]
\item if $c>c_0(L)$, there exists a closed one-form $\theta$ such that  the loops in $S$ have non-negative $(F_c+\frac{1}{\sqrt{2c}}\theta)$-length,
\item if $c<c_0(L)$, the loops in $S$ are unbounded from below for the $(F_c+\frac{1}{\sqrt{2c}}\theta)$-length.
\end{enumerate}
\end{prop}
\begin{proof}
Observe that given a curve $\gamma:[0,T]\rightarrow  S$, the minimum of $\mathbb{S}^\theta_c(\gamma,T)$ between all its reparametrizations is attained when $\gamma$ is a curve with constant energy $c$, namely, 
$\frac{1}{2}\mathfrak{g}(\dot\gamma,\dot\gamma)=c$. This can be proved, for example, computing the first and second variation of $\int_0^T \frac{1}{2} \mathfrak{g}(\dot\gamma,\dot\gamma) dt$ with a variational vector field proportional to $\dot\gamma$ (the other terms in $\mathbb{S}^\theta_c(\gamma,T)$ are invariant under reparametrizations). The only critical points in this situation are curves with constant velocity and the second variation is always positive.
Assuming that $\frac{1}{2}\mathfrak{g}(\dot\gamma,\dot\gamma)=c$, it follows that   $\mathbb{S}^\theta_c(\gamma,T)=\sqrt{2c}\,\ell_{F_c+\frac{1}{\sqrt{2c}}\theta}(\gamma)$. Then $\inf_{(\gamma,T)}\mathbb{S}^\theta_{c}(\gamma,T)=-\infty$ if there is some loop with negative $F_c+\frac{1}{\sqrt{2c}}\theta$-length and $\inf_{(\gamma,T)}\mathbb{S}^\theta_{c}(\gamma,T)\geq 0$ otherwise. This easily implies $(i)$ and $(ii)$.
\end{proof}
Recall that the Hamiltonian associated with the Lagrangian $L$ in \eqref{eq:lag} is given by
\begin{equation}\label{HdeL}
H(p)=\frac{1}{2} |p-\omega|^2_{\mathfrak{g}}.
\end{equation}
Next, we recall a result from  \cite[Corollary 2]{CIPP98} (see also \cite[Theorem 4.1]{Abb13}), which establishes a relation between magnetic geodesics of a certain energy and Finsler metrics.
\begin{thm}\label{thm:conj}
Let $S$ be a closed manifold and $(\mathfrak{g},\Omega)$ a magnetic structure on it. Then the magnetic geodesics with energy $k>c_0(L)$ are conjugated to  the geodesics of a Randers metric of the form $\tilde F_k(v)=\sqrt{2k\mathfrak{g}(v,v)}+ \omega(v)-\alpha(v)$, where $\alpha$ is a closed one-form which satisfies $H(\alpha)<k$.
\end{thm}
\begin{proof}
Observe that following the proof of \cite[Theorem 4.1]{Abb13}, the Finsler metric which is conjugated to the level $k>c_0(L)$ is obtained as follows. Let us define the Hamiltonian $K(p)=H(p+\alpha)$. We have that $K^{-1}(k)$ is the boundary of a uniformly convex bounded open set which contains the zero section of $T^*M$. Then we can define a two-homogeneous function $F:T{^*}M\rightarrow [0,+\infty)$  determined by the condition $F^{-1}(1)=K^{-1}(k)$. The Finsler metric is the Legendre dual of $F$. 
 By definition, if $p\in K^{-1}(k)$ and $\pi^*:T^*M\rightarrow M$ is the natural projection, then
\begin{equation}
k=H(p+\alpha)=\frac{1}{2} |p+\alpha-\omega|^2_{\mathfrak{g}}=\frac{1}{2} \max_{v\in T_{\pi^*(p)}M}|p(v)+\alpha(v)-\omega(v)|^2/g(v,v).
\end{equation}
In particular,  $p\in K^{-1}(k)$ if and only if $p(v)+\alpha(v)-\omega(v)\leq \sqrt{2k\mathfrak{g}(v,v)}$ for all $v\in T_{\pi^*(p)}M$ and the equality is reached for some $v_0\in T_{\pi^*(p)}M$. The last condition is equivalent to
\[1=\max_{v\in T_{\pi^*(p)}M} \frac{p(v)}{\tilde F_k(v)}=\tilde H_k(p),\]
where $\tilde H_k$ is the Legendre dual of $\tilde F_k$. This concludes that $F=\tilde H_k$, and then the Finsler metric conjugated to the level $k$ is $\tilde F_k$, as required. 
\end{proof}
Observe that the pre-Randers metrics in Proposition \ref{prop:mane} and Theorem \ref{thm:conj} are related. Indeed, $\tilde F_k=\sqrt{2k}F_k$, when $\alpha=-\theta$, and both metrics have the same geodesics.

Finally, we can also obtain an application for stationary spacetimes using some well-known results for periodic magnetic geodesics. Recall that in an $\som$-spacetime $(\R\times S,g)$, we say that a curve is $t$-periodic if its $t$-component is closed.
 Let us consider a magnetic structure associated with a the stationary-complete manifold given in \eqref{def:metrica}. In this case, we will consider the Lagrangian
\begin{equation}\label{Lstat}
L(v)=\frac{1}{2\beta^2}\omega(v)^2+\frac{1}{2\beta}g_0(v,v)+\frac{1}{\beta}\omega(v).
\end{equation}
\begin{cor}
	Let $(\R\times S,g)$ be the stationary-complete spacetime defined in \eqref{def:metrica}, with $S$ compact. If the lowest critical Ma\~ne value satisfies $c_u(L)< \frac{1}{2}$, being $L$ the Lagrangian defined in \eqref{Lstat}, then there exists a $t$-periodic lightlike geodesic of $(\R\times S,g)$.
	\end{cor}
\begin{proof}
	The existence of a $t$-periodic lightlike geodesic of $(\R\times S,g)$ is equivalent to the existence of a periodic geodesic of $(S,F)$, with $F$ the pre-Finsler metric defined in \eqref{eq:wsfinsler} (see Proposition \ref{prop:Fermatmetric} and also \cite[Theorem 7]{Jav13}). Moreover, by Proposition \ref{magranders}, pre-geodesics of $F$ are reparametrizations of magnetic geodesics of the magnetic structure $(\frac{1}{2\beta^2}\omega^2+\frac{1}{2\beta}g_0,-d(\frac{1}{\beta}\omega))$ on $S$ with energy $c$ satisfying $1=\frac{1}{\sqrt{2c}}$, namely, $c=1/2$. If $c_u(L)<1/2$, then by the main theorem of \cite{Abb13}, there exists a periodic magnetic geodesic of energy $c=1/2$, as required.
	\end{proof}

\begin{cor}
Let $\som(\R\times S)$ be the space of stationary spacetimes admitting a non-standard splitting with $S$ compact. Then almost all of them admit periodic lightlike geodesics.
\end{cor}
\begin{proof}
This is a straightforward consequence of the existence of magnetic geodesics for almost all the possible values of the energy $c$  (see the main theorem of \cite[page 399]{Abb13}).
\end{proof}
\appendix
\section{The cocycle approach to the causal ladder }

Some of the steps of the causal ladder for \som-spacetimes were studied first by Harris in \cite{Harris15}. The approach is different from the one presented here, as it is presented in terms of algebraic structures. However it is possible to obtain remarkable relations between both approaches.
Along this appendix, and after giving a brief review on the approach in \cite{Harris15}, we will show how the algebraic structures of such an approach are related with pre-Finsler metrics.

\subsection{Algebraic approach for global causality in \som-spacetimes}

As we have mentioned on \S \ref{s:stat}, the space of Killing orbits is a Hausdorff manifold. Let $Q$ denote such a space and define the projection $\pi:M\rightarrow Q$ as the quotient by the $\R$-action on $M$ defined by $t\cdot x=\gamma_{x}(t)$, where $\gamma_{x}$ is the integral curve of $K$ with $\gamma_{x}(0)=x$.

Then, it is possible to define the {\em Killing time function} as any function $t:M\rightarrow \R$ satisfying $dt(K) =1$. Observe that such a function defines naturally a representation of $Q$ as a slice on $M$ (with the same role as $S$ in our approach) making the identification $Q\equiv t^{-1}(0)$. 

It is proved then that, associated with this Killing time function $t$ (or, equivalently, to the slice $z_{t}$) there exists a one form $\tilde{\omega}$ (called the {\em Killing  drift-form}), such that

\begin{equation}
  \label{eq:1}
  g=-(\Omega\circ \pi)\left(dt+\pi^*\tilde{\omega}\right)^2 + \pi^* h,
\end{equation}
where $\Omega:Q\rightarrow \R^+$ is given by the length-squared of $K$ and $h$ is a Riemannian metric defined on $Q$ (thanks to the Killing character of $K$). The relation between the coefficients of \eqref{def:metrica} and \eqref{eq:1} is given by  

%

\[
\beta=\Omega,\qquad \omega=-\Omega\tilde{\omega}, \qquad  g_{0}=-\Omega \tilde{\omega}^2 + h\qquad  \hbox{and}\qquad S=z_t,
  \]
  where we have omitted any reference to $\pi$ for simplicity.
  Moreover, and recalling the definition of $F$ (see (\ref{eq:wsfinsler})), it follows that in this case:

  \begin{equation}\label{fermatpre}
F(v)=-\tilde{\omega}(v)+\sqrt{\frac{h(v,v)}{\Omega}}.
    \end{equation}

    \medskip
    
 Now, we will introduce the basic tools needed for the characterization given in \cite{Harris15}. We will stick in the essential tools required for understanding the relation between both papers, referring Harris' paper for those readers interested on further details.

 For a given curve $c:[a,b]\rightarrow S$, we will denote by $L(c)$ the length of $c$ computed with the Riemannian metric $\tilde{h}:=h/\Omega$. We will also define the {\em efficiency} of the curve $c$ with respect to a $1$-form $\theta$, denoted by $eff_{\theta}(c)$, by

 \[
eff_{\theta}(c)=\frac{\int_{c}\theta}{L(c)}
\]
and

\[
L_{\theta}(c)=L(c)-\int_{c}\theta.\footnote{In fact, in Harris' paper, he defines $d_{F}$ as $d_{\tilde\omega}$ and recall some properties of it. For instance, our characterization of chronologically vicious is implicit in \cite[Proposition 2.9]{Harris15}}
  \]
 Observe that $L_{\tilde\omega}(c)=\ell_F(c)$, namely, the length with the Fermat metric given in \eqref{fermatpre}. Finally, define the {\em weight} of $\theta$, denoted by $wt(\theta)$, as

   \[
wt(\theta)= {\rm sup}_{loops\, c}(eff_{\theta}(c)).
     \]

     \begin{rem}
       Let us remark that here we are making a {\em huge} simplification of the approach, as different technicalities should be considered. For instance, here the weight is defined for $1$-forms, while in the original approach is done for {\em cocycles}. Moreover, the previous supreme is taken over the set of loops (closed curves) with the same basepoint, so it is necessary to recall that the weight is independent of the base point considered. Again, we refer to the original work for a detailed presentation of all these technicalities. 
     \end{rem}

     In order to present the main result in Harris' paper regarding the causal ladder, we need to introduce the following concepts:

     \begin{defi}
       Let $\pi:M\rightarrow S$ be the projection from $M$ to $S$. We will say that $M$ is {\it causally bounded} if for all $p,p'\in M$, $\pi(I^+(p)\cap I^-(p'))$ is bounded for the metric $\tilde{h}$. $M$ is {\it spatially complete} if $S$ is complete for $\tilde{h}$.
     \end{defi}

     These definitions are in the {\rm core} of the global hyperbolicity of $(M,g)$. In fact, it follows (see \cite[Proposition 6.3]{Harris15})

     \begin{prop}\label{prop:harris1}
       $M$ is globally hyperbolic if and only if $M$ is
       \begin{enumerate}
       \item future-distinguishing,
       \item causally bounded, and
       \item spatially complete.
       \end{enumerate}
     \end{prop}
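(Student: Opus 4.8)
The plan is to translate Harris' three conditions into the language of the Fermat pre-Randers metric $F$ and its symmetrized pre-distance $d_s$, and then match them against the characterization of global hyperbolicity already established in part (GH) of Theorem \ref{teo:CSYGH}. Since $(M,g)$ is globally hyperbolic if and only if $d_s$ is a distance and the balls $B_s(x,r)$ are precompact (equivalence of (i) and (ii) there), and since being future-distinguishing is, by Proposition \ref{prop:disting}, precisely the condition that $d_s$ be a distance, the whole statement reduces to proving, under the standing assumption that $d_s$ is a distance, the equivalence between ``$(M,g)$ is causally bounded and spatially complete'' and ``the balls $B_s(x,r)$ are precompact''. The totally vicious degenerate case $d_s\equiv-\infty$ is harmlessly excluded from the outset by the future-distinguishing hypothesis.

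First I would record the comparison $d_s\leq d_{\tilde h}$, where $\tilde h=h/\Omega$ is Harris' reference Riemannian metric. This follows from $\ell_F(c)=L_{\tilde\omega}(c)=L(c)-\int_c\tilde\omega$ (noted after \eqref{fermatpre}): for a curve $c$ from $x$ to $y$ and its reversal $\bar c$ one has $\ell_F(c)+\ell_F(\bar c)=2L(c)$, so $2d_s(x,y)\leq\ell_F(c)+\ell_F(\bar c)=2L(c)$ and, taking the infimum over $c$, $d_s(x,y)\leq d_{\tilde h}(x,y)$; in particular $B_{\tilde h}(x,r)\subseteq B_s(x,r)$. The key step is then to identify projected causal diamonds with symmetrized balls. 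By Proposition \ref{teo:caracrel}, a point $x$ lies in $\pi(I^+((t_0,x_0))\cap I^-((t_1,x_1)))$ exactly when $d_F(x_0,x)+d_F(x,x_1)<t_1-t_0$. The triangle inequality gives $2d_s(x_0,x)\leq d_F(x_0,x)+d_F(x,x_1)+d_F(x_1,x_0)$, so every such projection is contained in a symmetrized ball; conversely, taking $x_1=x_0$ the condition becomes $d_F(x_0,x)+d_F(x,x_0)<t_1-t_0$, whence
\[
\pi\bigl(I^+((0,x_0))\cap I^-((2r,x_0))\bigr)=B_s(x_0,r).
\]
Consequently, causal boundedness (the $\tilde h$-boundedness of all projected diamonds) is equivalent to the $\tilde h$-boundedness of all symmetrized balls $B_s(x_0,r)$.

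I would close the argument with the Hopf--Rinow theorem applied to the Riemannian metric $\tilde h$. If the balls $B_s$ are precompact, then so are the smaller balls $B_{\tilde h}\subseteq B_s$, hence $\tilde h$ is complete, i.e. $(M,g)$ is spatially complete; moreover precompact sets are $\tilde h$-bounded, so $(M,g)$ is causally bounded. Conversely, if $\tilde h$ is complete and every $B_s$ is $\tilde h$-bounded, then each $B_s$ has closed, bounded closure in a complete Riemannian manifold, hence compact, so the balls $B_s$ are precompact. Chaining the three equivalences — future-distinguishing $\iff$ $d_s$ a distance; (causally bounded and spatially complete) $\iff$ $B_s$ precompact; and global hyperbolicity $\iff$ $d_s$ a distance and $B_s$ precompact — yields the proposition.

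I expect the principal obstacle to be the two-sided matching in the key step: causal boundedness quantifies over all pairs $p,p'$, so I must verify both that every projected diamond really fits inside a single symmetrized ball (the triangle-inequality sandwich) and that the symmetrized balls are recovered as genuine projected diamonds, while keeping careful track of the distinction between $\tilde h$-boundedness, boundedness in the manifold topology, and precompactness. The inequality $d_s\leq d_{\tilde h}$ is what makes the completeness half of the argument go through, so establishing it cleanly is the other point deserving care.
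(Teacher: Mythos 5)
Your proof is correct. Note, though, that the paper does not actually prove this proposition: it is imported verbatim from Harris (\cite[Proposition 6.3]{Harris15}), whose own argument works directly with the causal and quotient structure of the spacetime. What you have written is in effect an independent derivation of Harris' criterion from the paper's Finslerian characterization of global hyperbolicity, and it coincides almost step for step with the translation results the authors prove immediately afterwards in the appendix: your identification $\pi\bigl(I^+((0,x_0))\cap I^-((2r,x_0))\bigr)=B_s(x_0,r)$ is Lemma \ref{lem:Harris1}, your comparison $d_s\leq d_{\tilde{h}}$ is Lemma \ref{lem:harris2}, and your Hopf--Rinow argument reproduces the two unnumbered propositions that follow them (causal boundedness is equivalent to $\tilde{h}$-boundedness of the symmetrized balls, and causal boundedness plus spatial completeness is equivalent to their precompactness). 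The authors use these facts only as a consistency check against Harris' statement; you have reassembled them, together with Proposition \ref{prop:disting} and the equivalence of $(i)$ and $(ii)$ in part (GH) of Theorem \ref{teo:CSYGH}, into an actual proof. This buys self-containedness --- Harris' proposition becomes a corollary of the pre-Randers machinery rather than an external input --- at the cost of resting on Theorem \ref{teo:CSYGH}, whose proof in turn leans on the standard stationary theory of \cite{CJS11} and \cite{JS08}, whereas Harris' route is intrinsic to the spacetime. Your handling of the two delicate points (the two-sided matching of projected diamonds with symmetrized balls, and the degenerate case $d_s\equiv-\infty$) is sound.
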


     \smallskip 

     The main result in \cite{Harris15} then reads:

     \begin{thm}\label{HarrisThm}
       Let $\pi:M\rightarrow S$ be a stationary-complete spacetime satisfying the observer-manifold condition. There are only these mutually exclusive possibilities:

       \begin{enumerate}
       \item If $wt(\tilde\omega)>1$, then $M$ is chronologically vicious.

       \item If

         \begin{itemize}
         \item $wt(\tilde\omega)=1$ and
         \item there is a loop $c$ in $S$ with $L(c)=\int_c \tilde\omega$,
         \end{itemize}
         then $M$ is chronological but not causal.

       \item If

         \begin{itemize}
         \item $wt(\tilde\omega)=1$,
         \item for all loop $c$ in $S$ with $L(c)>\int_c \tilde\omega$ and
         \item there is a sequence of base-pointed loops $\{c_n\}_n$ in $S$ with
           \[
eff_{\tilde\omega}(c_n)\rightarrow 1 \quad \hbox{and} \quad L_{\tilde\omega}(c_n)\rightarrow 0;
             \]
         \end{itemize}
         then $M$ is causal but not future- or past-distinguishing (in particular, not strongly causal).

\item If
\begin{itemize}
         \item $wt(\tilde\omega)=1$,
         \item for all loop $c$ in $S$ with $L(c)>\int_c \tilde\omega$,
         \item for every sequence of base-pointed loops $\{c_n\}_n$ in $S$ with $eff_{\tilde\omega}(c_n)\rightarrow 1$, it follows that $\{L_{\tilde\omega}(c_n)\}_n$ is bounded away from $0$.
         \item there is such a sequence $\{c_n\}_n$ with $\{L_{\tilde\omega}(c_n)\}$ bounded above;    
         \end{itemize}
         then $M$ is strongly causal  (and causally continuous) but not spatially complete or not causally bounded (in particular, not globally hyperbolic).

\item If
\begin{itemize}
         \item $wt(\tilde\omega)=1$,
         \item for all loop $c$ in $S$ with $L(c)>\int_c \tilde\omega$,
         \item for every sequence of base-pointed loops $\{c_n\}_n$ in $S$ with $eff_{\tilde\omega}(c_n)\rightarrow 1$, it follows that $L_{\tilde\omega}(c_n)\rightarrow \infty$,    
         \end{itemize}
         then $M$ is strongly causal  (and causally continuous) and causally bounded (so globally hyperbolic if, and only if, it is spatially complete).  

\item If $wt(\tilde\omega)<1$, then $M$ is strongly causal and causally bounded.
         
       \end{enumerate}
     \end{thm}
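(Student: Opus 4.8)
The plan is to translate every hypothesis of the theorem into the language of the Fermat pre-Randers metric $F$ of \eqref{fermatpre} and then read off the causal character from the ladder already established in \S\ref{s:ladder}. The bridge is the identity $L_{\tilde\omega}(c)=\ell_F(c)$ recorded above, together with the rewriting of the weight
\[
wt(\tilde\omega)=\sup_{c\ \mathrm{loop}} eff_{\tilde\omega}(c)=\sup_{c\ \mathrm{loop}}\left(1-\frac{\ell_F(c)}{L(c)}\right),
\]
in which $L(c)>0$ for every non-constant loop because $\tilde h$ is Riemannian. From this I read off the trichotomy on $wt(\tilde\omega)$: the inequality $wt(\tilde\omega)>1$ is equivalent to the existence of a loop with $\ell_F(c)<0$; the equality $wt(\tilde\omega)=1$ means $\ell_F(c)\geq 0$ on every loop with the bound sharp; and $wt(\tilde\omega)<1$ is equivalent to the uniform estimate $\ell_F(c)\geq (1-wt(\tilde\omega))\,L(c)$. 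The six items are then exactly the cells of the partition obtained by refining this trichotomy according to whether the value $wt(\tilde\omega)=1$ is attained by some loop and to the asymptotics of $\ell_F(c_n)$ along efficiency-maximizing sequences, so mutual exclusivity and exhaustiveness are immediate.

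With this dictionary I would assign each case to a rung of the ladder. Case (1) asserts a loop with $\ell_F<0$, hence totally vicious (chronologically vicious in Harris' terminology) by (C1) of Proposition \ref{prop:escalera1}; cases (2)--(6) all lie in $\{\ell_F\geq 0$ on loops$\}$, hence chronological by (C2). Case (2) adds a non-trivial loop with $\ell_F=0$, which is precisely the obstruction to causality in Proposition \ref{prop:esc2}, so it is chronological but not causal, while the remaining cases have $\ell_F>0$ on every non-trivial loop and are therefore causal. To separate (3) from (4)--(5) I would prove the key equivalence: a causal \som-spacetime fails to be distinguishing if and only if there is a sequence of base-pointed loops $c_n$ with $eff_{\tilde\omega}(c_n)\to 1$ and $\ell_F(c_n)\to 0$. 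By Proposition \ref{prop:disting}, non-distinguishing in the chronological regime means $d_s(x_0,x_1)=0$ for some $x_0\neq x_1$; concatenating near-optimal curves from $x_0$ to $x_1$ and back produces such loops, and since these curves join two fixed distinct points their $\tilde h$-lengths stay bounded below, forcing the efficiencies to $1$. Thus case (3) is causal but non-distinguishing, while in (4) and (5) the spacetime is distinguishing, hence strongly causal and causally continuous by Corollary \ref{prop:caucon}.

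Inside the distinguishing regime I would discriminate global hyperbolicity through Harris' Proposition \ref{prop:harris1}, which reduces it to causal boundedness and spatial completeness, read off Theorem \ref{teo:CSYGH}(GH) via the symmetrized distance. The hypothesis of case (5), that $\ell_F(c_n)\to\infty$ along every maximizing sequence, forces $d_s(x_0,x_n)\to\infty$ whenever the $x_n$ escape every compact set, which is exactly the precompactness input of Theorem \ref{teo:CSYGH}(GH)(iv) and yields causal boundedness; in case (4) some maximizing sequence keeps $\ell_F(c_n)$ bounded, which violates this and makes global hyperbolicity fail. Finally case (6), $wt(\tilde\omega)<1$, gives $\ell_F(c)\geq(1-wt(\tilde\omega))L(c)$, whence $d_s$ dominates a positive multiple of $d_{\tilde h}$ and causal boundedness (together with strong causality) follows at once.

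The hard part will be the loop-sequence characterization of the distinguishing condition, and in particular the claim that efficiencies bounded away from $1$ translate into $d_s$ being bounded below on loops of fixed $\tilde h$-size. The delicate point is excluding \emph{shrinking} loops: a loop $c$ bounding a small disk $D$ has $\int_c\tilde\omega=\int_D d\tilde\omega=O(\mathrm{area}(D))$ while $L(c)\gtrsim\sqrt{\mathrm{area}(D)}$, so $eff_{\tilde\omega}(c)\to 0$ and such loops never witness $wt(\tilde\omega)=1$. Establishing this Stokes-type estimate, and using it to guarantee that efficiency-maximizing loops have $\tilde h$-length bounded below so that splitting them produces two genuinely distinct limit points with $d_s=0$, is where the real analysis lies; everything else is bookkeeping against the ladder of \S\ref{s:ladder}.
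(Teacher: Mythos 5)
The first thing to say is that the paper contains no proof of this statement: Theorem \ref{HarrisThm} is quoted from Harris \cite{Harris15}, and the appendix only sets up a dictionary (the identity $L_{\tilde\omega}(c)=\ell_F(c)$, the proposition that $wt(\tilde\omega)\leq 1$ iff $d_F(x,x)=0$, Proposition \ref{prop:prop1}, and Lemmas \ref{lem:Harris1} and \ref{lem:harris2}) relating Harris' hypotheses to the causal ladder of \S\ref{s:ladder}. Your plan is precisely that dictionary carried to completion, so you are reconstructing a proof the paper deliberately omits. Cases (1), (2) and (6), and the ``causal'' and ``non-distinguishing'' parts of case (3), are correct and essentially coincide with what the appendix establishes: (1) and (2) are (C1), (C2) of Proposition \ref{prop:escalera1} together with Proposition \ref{prop:esc2}; (6) follows from $\ell_F(c)\geq(1-wt(\tilde\omega))L(c)$, hence $d_s\geq(1-wt(\tilde\omega))d_{\tilde h}$; and your loop-sequence characterization of non-distinguishing is Proposition \ref{prop:prop1} combined with Proposition \ref{prop:disting}. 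The local estimate you flag as ``the real analysis'' (efficiency-maximizing loops must leave a fixed neighbourhood of the basepoint) is imported by the paper from \cite[Proposition 2.2]{Harris15}; your coning/Stokes sketch is an acceptable substitute, since inside a coordinate ball $U$ one gets $\bigl|\int_c\tilde\omega\bigr|\leq O(\mathrm{diam}(U))\,L(c)$, so the local efficiency is uniformly small.

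Two steps are genuinely incomplete. In case (4), ``some maximizing sequence keeps $\ell_F(c_n)$ bounded, which violates this'' is not an argument. What must be shown is that loops $c_n$ with $eff_{\tilde\omega}(c_n)\rightarrow 1$ and $0<\delta\leq\ell_F(c_n)\leq R$ rule out precompactness of the symmetrized balls. Note that $L(c_n)=\ell_F(c_n)/(1-eff_{\tilde\omega}(c_n))\rightarrow\infty$ while every point of $c_n$ lies in $\overline{B}_s(x_0,R/2)$; if that ball were compact you would need a uniform bound $eff_{\tilde\omega}(c)\leq 1-\epsilon$ for loops confined to a compact set, and this follows from nothing you have cited. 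It can be obtained from Proposition \ref{prop:convex}: precompactness makes $F$ almost isometric to a genuine complete Randers metric $F'=\sqrt{h'}+\omega'$ with $\|\omega'\|_{h'}<1$, one has $\ell_{F'}=\ell_F$ on loops and $\sup_K\|\omega'\|_{h'}<1$ on compacta, forcing $\ell_F(c_n)\geq\epsilon' L(c_n)\rightarrow\infty$, a contradiction; without this (or an equivalent) input case (4) is unproved. In case (5) you argue for the wrong statement: the conclusion is causal boundedness, i.e.\ that $d_s$-balls are $\tilde{h}$-bounded, which by the final proposition of the appendix is strictly weaker than precompactness of the symmetrized balls (the latter is causal boundedness \emph{plus} spatial completeness, and the theorem explicitly leaves spatial completeness undecided in case (5)). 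Your claim that the hypotheses force $d_s(x_0,x_n)\rightarrow\infty$ whenever $x_n$ escapes every compact set is stronger than what is asserted and cannot hold in general. The correct deduction is: if $d_s(x_0,x_n)\leq R$ while $d_{\tilde h}(x_0,x_n)\rightarrow\infty$, concatenating near-minimizers from $x_0$ to $x_n$ and back gives base-pointed loops with $\ell_F$ bounded and $L\rightarrow\infty$, hence $eff_{\tilde\omega}\rightarrow 1$, contradicting the hypothesis that $\ell_F(c_n)\rightarrow\infty$ along every such sequence.
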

    
\subsection{Comparing both approaches}
The natural question at this point is clear, how the previous concepts are related with the pre-Finsler metric given in \eqref{fermatpre} (and related objects, like the symmetrized distance). Of course, we can infer such relations by means of the several characterizations of the causal ladder obtained here, however a direct approach (that is, comparing directly with the pre-Finsler metric) will be more illuminating.

Let us start by the following two propositions.

     \begin{prop}
       $wt(\tilde{\omega})\leq 1$ if, and only if, $d_{F}(x,x)=0$ for some (and then, all) $x\in S$.
     \end{prop}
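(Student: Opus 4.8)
The plan is to unwind both sides into statements about the $F$-length of loops and then match them. Recall that for a loop $c$ based at $x$, the quantity $L_{\tilde\omega}(c) = L(c) - \int_c \tilde\omega$ equals $\ell_F(c)$, as observed after \eqref{fermatpre}. Thus the condition $d_F(x,x)=0$ is, by part (C1) of Proposition \ref{prop:escalera1} together with part $(iii)$ of Proposition \ref{prop:continuidad}, equivalent to saying that \emph{no loop based at $x$ has negative $F$-length}, i.e. $\ell_F(c)\geq 0$ for every loop $c$, equivalently $L(c)\geq\int_c\tilde\omega$ for every loop $c$. (Since $F$ is a genuine pre-Randers metric here, $d_F(x,x)\in\{0,-\infty\}$ and the value $0$ is equivalent to the absence of negative-length loops.)

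First I would translate the left-hand side. By the definition of weight, $wt(\tilde\omega)=\sup_{c}eff_{\tilde\omega}(c)=\sup_c \frac{\int_c\tilde\omega}{L(c)}$, the supremum taken over loops $c$ (where $L(c)>0$, i.e. non-constant loops). The key elementary observation is that for a fixed non-constant loop $c$ one has $eff_{\tilde\omega}(c)\leq 1 \iff \int_c\tilde\omega\leq L(c) \iff \ell_F(c)=L(c)-\int_c\tilde\omega\geq 0$. Taking the supremum, $wt(\tilde\omega)\leq 1$ holds if and only if $eff_{\tilde\omega}(c)\leq 1$ for \emph{every} loop $c$, which is exactly the statement that $\ell_F(c)\geq 0$ for every loop $c$.

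Putting the two translations together closes the proof: $wt(\tilde\omega)\leq 1 \iff \ell_F(c)\geq 0$ for all loops $c \iff$ there is no loop of negative $F$-length $\iff d_F(x,x)=0$ for some (and hence, by Proposition \ref{prop:continuidad}$(iii)$, all) $x\in S$. The ``some iff all'' clause is supplied directly by part $(iii)$ of Proposition \ref{prop:continuidad}, since the alternative $d_F(x,x)<0$ forces $d_F\equiv-\infty$ uniformly, and the weight is independent of basepoint by the remark preceding the theorems.

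The only genuinely delicate point, and the one I would be most careful about, is the boundary case in the supremum: $wt(\tilde\omega)\leq 1$ allows $eff_{\tilde\omega}(c)=1$ for some loop or as a limit of loops, and I must make sure this does not sneak in a negative-length loop. But $eff_{\tilde\omega}(c)\le 1$ for \emph{each} loop is precisely $\ell_F(c)\ge 0$ for each loop, and $wt(\tilde\omega)=\sup_c eff_{\tilde\omega}(c)\leq 1$ is equivalent to this pointwise bound holding for all loops (a supremum is $\le 1$ iff every term is $\le 1$); no cancellation or limiting subtlety intervenes because the inequality is required termwise. The other minor care is the convention that constant loops contribute $\ell_F=0$ and are excluded from the efficiency supremum by the requirement $L(c)>0$, so they affect neither side.
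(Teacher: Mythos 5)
Your proof is correct and follows essentially the same route as the paper, whose entire argument is the single identity $\ell_F(c)=L(c)-\int_c\tilde\omega=L(c)\bigl(1-eff_{\tilde\omega}(c)\bigr)$ for loops $c$, so that $\ell_F(c)<0$ iff $eff_{\tilde\omega}(c)>1$; your translation of $d_F(x,x)=0$ via Propositions \ref{prop:escalera1} and \ref{prop:continuidad}$(iii)$ and your handling of the supremum and of constant loops match what the paper leaves implicit.
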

     \begin{proof}
           The equivalence follows by simply observing that, for any loop $c$ with basepoint $x_0$
\[
{\ell}_{F}(c)=L(c)-\int_{c}\tilde{\omega}=L(c)\left( 1-eff_{\tilde\omega}(c) \right)
\]
Hence ${\ell}_{F}(c)<0$ if and only if $eff_{\tilde\omega}(c)> 1$. 
\end{proof}

\begin{prop}\label{prop:prop1}
  If $d_s(x_0,x_1)=0$ for some $x_0\neq x_1$, then there exists a family of cycles $\{c_n\}$ with basepoint in $x_0$, not contractible\footnote{Here we say that the sequence of curves $\left\{ c_{n} \right\}_{n}$ is not contractible if there exists a sequence $\{t_n\}_{n}$ so $c_n(t_n)\not\rightarrow x_0$.}  and  such that $L_{\tilde\omega}(c_n)\rightarrow 0$. The converse is also true if we assume additionally that $eff_{\tilde\omega}(c_n)\rightarrow 1$.
\end{prop}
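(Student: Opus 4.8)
The plan is to treat the two implications separately, in each case exploiting the identity $L_{\tilde\omega}(c)=\ell_F(c)$ recorded above, which lets me translate the efficiency/length data of Harris' framework into the Fermat length $\ell_F$.

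For the direct implication, I would first observe that $d_s(x_0,x_1)=0$ is finite, so by part $(iv)$ of Proposition \ref{prop:continuidad} the pre-distance $d_F$ is nowhere $-\infty$; in particular $a:=d_F(x_0,x_1)$ and $b:=d_F(x_1,x_0)$ are finite real numbers with $a+b=2d_s(x_0,x_1)=0$. By the very definition of $d_F$ as an infimum over $C(x_0,x_1)$ and $C(x_1,x_0)$, I would choose curves $\gamma_n$ from $x_0$ to $x_1$ and $\sigma_n$ from $x_1$ to $x_0$ with $\ell_F(\gamma_n)\to a$ and $\ell_F(\sigma_n)\to b$, and set $c_n:=\gamma_n\star\sigma_n$, a loop based at $x_0$. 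Additivity of length under concatenation gives $L_{\tilde\omega}(c_n)=\ell_F(c_n)=\ell_F(\gamma_n)+\ell_F(\sigma_n)\to a+b=0$, while each $c_n$ passes through $x_1$ at the concatenation instant $t_n$; since $x_1\neq x_0$ the constant sequence $c_n(t_n)=x_1$ does not converge to $x_0$, so $\{c_n\}$ is not contractible in the sense of the footnote.

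For the converse, given the loops $\{c_n\}$ I would split each at the instant $t_n$ furnished by non-contractibility, writing $c_n=\alpha_n\star\beta_n$ with $\alpha_n$ running from $x_0$ to $y_n:=c_n(t_n)$ and $\beta_n$ from $y_n$ back to $x_0$. Bounding the relevant infima by these pieces, $2d_s(x_0,y_n)=d_F(x_0,y_n)+d_F(y_n,x_0)\le \ell_F(\alpha_n)+\ell_F(\beta_n)=L_{\tilde\omega}(c_n)\to 0$. Working in the chronological regime, where $d_s\ge 0$ by part $(C2)$ of Proposition \ref{prop:escalera1}, this forces $d_s(x_0,y_n)\to 0$. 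I would then argue by contradiction: if $(M,g)$ were distinguishing, Proposition \ref{prop:disting} makes $d_s$ a genuine distance, and by Proposition \ref{prop5.1} together with the splitting-invariance of $d_s$ (Lemma \ref{lem:ind}) this $d_s$ agrees with the symmetrized distance of a positive Randers metric, hence induces the manifold topology. Then $d_s(x_0,y_n)\to 0$ would force $y_n\to x_0$, contradicting $y_n\not\to x_0$. Therefore $(M,g)$ is not distinguishing, i.e. $d_s$ vanishes at some pair of distinct points, which in the chronological case means $d_s$ equals $0$ there.

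The step I expect to be delicate is producing an actual point $x_1\neq x_0$ at which $d_s$ vanishes. The naive route, namely extracting a subsequence with $y_n\to x_1$ and invoking continuity of $d_s$ (part $(v)$ of Proposition \ref{prop:continuidad}), requires a compactness input to prevent the $y_n$ from escaping to infinity, which the bare hypotheses do not supply; the contradiction argument above is designed precisely to bypass this by using that a distinguishing $d_s$ metrizes $S$. The extra assumption $eff_{\tilde\omega}(c_n)\to 1$ enters to keep us inside the chronological regime underlying both the sandwich $d_s\ge 0$ and the identification with a positive Randers metric: since $eff_{\tilde\omega}(c_n)\le wt(\tilde\omega)$ it yields $wt(\tilde\omega)\ge 1$, matching the cases $wt(\tilde\omega)=1$ of Theorem \ref{HarrisThm} where this comparison is applied. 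One should finally record the totally vicious case ($d_F\equiv -\infty$, i.e. $wt(\tilde\omega)>1$) separately, where $d_s\equiv -\infty$ and the statement is read accordingly.
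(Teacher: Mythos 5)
Your forward implication coincides with the paper's: both pick near-minimizing curves from $x_0$ to $x_1$ and back, concatenate them into loops through $x_1\ne x_0$, and use $L_{\tilde\omega}(c_n)=\ell_F(c_n)\to 2d_s(x_0,x_1)=0$. Nothing to object to there.

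For the converse there is a genuine gap, concentrated in exactly the step you flag as delicate. The statement (read as the converse of the first implication) asserts that $d_s$ vanishes at a pair \emph{involving the basepoint} $x_0$ of the given loops; your argument only concludes that $(M,g)$ fails to be distinguishing, i.e.\ that $d_s$ vanishes at \emph{some} pair of distinct points, with no control on whether that pair contains $x_0$. The paper closes this gap precisely with the hypothesis $eff_{\tilde\omega}(c_n)\to 1$: by the unlabelled lemma preceding the proposition (deduced from Harris's Proposition 2.2), this hypothesis forces every $c_n$, for $n$ large, to leave a \emph{fixed} neighbourhood $U\ni x_0$; taking $y_n$ to be the first contact point of $c_n$ with the compact set $\partial U$, one extracts $y_n\to x_1\in\partial U$ (so $x_1\ne x_0$) and gets $d_s(x_0,x_1)=\lim_n d_s(x_0,y_n)\le\lim_n L_{\tilde\omega}(c_n)=0$ by continuity of $d_s$ (Proposition~\ref{prop:continuidad}). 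In other words, the efficiency hypothesis is exactly the compactness input you declared unavailable. Relatedly, your stated reason for assuming $eff_{\tilde\omega}(c_n)\to1$ is backwards: $wt(\tilde\omega)\ge 1$ does not put you in the chronological regime, since chronology corresponds to $wt(\tilde\omega)\le 1$; as a consequence the efficiency hypothesis is never genuinely used in your argument, which is a symptom that it proves a weaker statement. I do note that your auxiliary chain — distinguishing implies, via Propositions~\ref{prop:disting} and~\ref{prop5.1} and Lemma~\ref{lem:ind}, that $d_s$ is the symmetrized distance of a genuine Randers metric and hence metrizes $S$, forcing $y_n\to x_0$ — is sound, and that your explicit treatment of the totally vicious case is more careful than the paper, whose displayed conclusion is really only $d_s(x_0,x_1)\le 0$.
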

Before the proof of this proposition, we will need to recall the following result (which proof can be deduced from \cite[Proposition 2.2]{Harris15})

\begin{lemma}
 Let $\{c_n\}_n$ be a family of loops with basepoint $x_0$ and  such that $eff_{\tilde\omega}(c_n)\rightarrow 1$. Then, there exists a neighborhood $U$ (homeomorphic to the Euclidean ball) with $x_0\in U$ such that for all $n$ big enough, the curve $c_n$ leaves $U$.
\end{lemma}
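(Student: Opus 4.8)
The plan is to argue by contradiction: I will show that any loop contained in a sufficiently small ball around $x_0$ has efficiency bounded away from $1$, so a sequence with $eff_{\tilde\omega}(c_n)\to 1$ cannot remain inside such a ball for $n$ large. The decisive observation is that the efficiency of a \emph{closed} curve does not feel the value of $\tilde\omega$ at $x_0$ but only its \emph{variation}, which is small near $x_0$.

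Concretely, I would first fix coordinates $(x^1,\dots,x^{n-1})$ centered at $x_0$ on a contractible chart $V$, so that there $\tilde\omega=\sum_i \omega_i(x)\,dx^i$. Let $\tilde\omega_0:=\sum_i \omega_i(0)\,dx^i$ denote the frozen, constant-coefficient form. Since $\tilde\omega_0=d\big(\sum_i \omega_i(0)\,x^i\big)$ is exact on $V$, its integral over any loop vanishes, and hence for every loop $c$ in $V$ one has $\int_c\tilde\omega=\int_c(\tilde\omega-\tilde\omega_0)$. This is the cancellation that makes the whole argument work.

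Next I would estimate the remaining integral. By smoothness of the coefficients, $|\omega_i(x)-\omega_i(0)|\le C\,|x|$ on a compact sub-neighborhood, and on that compact set the coordinate-Euclidean length $L_{\mathrm{eucl}}$ and the $\tilde h$-length $L$ are comparable, say $L_{\mathrm{eucl}}(c)\le A\,L(c)$. Combining these yields $\big|\int_c\tilde\omega\big|\le C'\big(\sup_{p\in c}|x(p)|\big)\,L(c)$ for every loop $c$ in $V$, whence $eff_{\tilde\omega}(c)\le C''\sup_{p\in c}|x(p)|$. I then take $U=\{|x|<\varepsilon\}$, which is homeomorphic to a Euclidean ball, with $\varepsilon$ chosen so small that $C''\varepsilon\le\tfrac12$. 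If some $c_n$ were contained in $U$, it would satisfy $eff_{\tilde\omega}(c_n)\le\tfrac12$, contradicting $eff_{\tilde\omega}(c_n)\to 1$; therefore every $c_n$ with $n$ large must leave $U$.

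The main obstacle is precisely the cancellation step: a naive pointwise bound $eff_{\tilde\omega}(c)\le\sup_U\|\tilde\omega\|_{\tilde h}$ need not be strictly below $1$, so it cannot be used directly. The key point is that over a loop the constant part of $\tilde\omega$ contributes nothing, so only the first-order variation survives, and this forces the efficiency to be $O(\varepsilon)$ on the $\varepsilon$-ball. The remaining ingredients (choice of chart, the Lipschitz estimate of $\tilde\omega-\tilde\omega_0$, and the comparability of the Euclidean and $\tilde h$ lengths on a compact set) are routine.
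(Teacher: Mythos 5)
Your proof is correct. Note that the paper itself does not write out an argument for this lemma: it only remarks that the statement ``can be deduced from \cite[Proposition 2.2]{Harris15}'', i.e.\ from Harris's local bound on the efficiency of loops near a point. What you have produced is in effect a self-contained proof of precisely that local bound, and your key step is the right one: the naive estimate $eff_{\tilde\omega}(c)\leq\sup_U\|\tilde\omega\|_{\tilde h}$ is useless because it need not be below $1$, whereas subtracting the frozen form $\tilde\omega_0=\sum_i\omega_i(0)\,dx^i$ (exact, hence invisible to loops) leaves only the $O(|x|)$ variation, forcing $|eff_{\tilde\omega}(c)|=O(\varepsilon)$ for every non-constant loop contained in the coordinate $\varepsilon$-ball, uniformly in the loop. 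The contrapositive then gives the lemma. The only points worth making explicit are that $\varepsilon$ must be small enough for $\{|x|\leq\varepsilon\}$ to be compact in the chart (so that the Euclidean and $\tilde h$ lengths are uniformly comparable there and the Lipschitz bound on the coefficients holds), and that the constant loop, for which the efficiency is not defined, is excluded; both are routine. So your argument is sound and arguably more transparent than the paper's citation, since it makes visible the geometric reason the lemma holds: near any point the drift form is almost exact, so highly efficient loops cannot stay local.
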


\begin{proof}[Proof of Proposition \ref{prop:prop1}]
    Assume that $d_{s}(x_0,x_1)=0$ for some $x_0,x_1\in S$ with $x_0\neq x_1$. From the definition of $d_s$, we deduce that there exists a family of loops $\{c_n\}_n$ with basepoint $x_0$, passing through $x_1$ and with $\ell_{F}(c_n)\rightarrow 0$. Then the result follows recalling that $\ell_{F}(c_n)=L_{\tilde\omega}(c_n)$.

    For the other implication, assume that we have a family of loops $\{c_n\}_n$ with basepoint $x_0$ and satisfying both, that $L_{\tilde\omega}(c_n)\rightarrow 0$ and $eff_{\tilde\omega}(c_n)\rightarrow 1$. Let $U$ be the neighborhood given in the previous lemma, and denote by $y_n$ the first contact point of $c_n$ with $\partial U$ (which is assumed to be compact). From compactness, and up to a subsequence, we can assume that $y_n\rightarrow x_1$ for some $x_1\in \partial U$ (and so, with $x_0\neq x_1$). Now, from the continuity of $d_s$ (which is derived from the continuity of $d_{F}$, see  Prop. \ref{prop:continuidad}) we have:

    \[
d_s(x_0,x_1)={\rm lim}_{n} d_{s}(x_0,y_n)\leq {\rm lim}_n \ell_{F}(c_n)={\rm lim}_n L_{\tilde\omega}(c_n)=0
      \]
\end{proof}

Both previous propositions allow us to understand completely how  parts $\it (1)$, $\it (2)$, $\it (3)$ and even $\it (4)$  of Theorem \ref{HarrisThm}  are related with Props. \ref{prop:escalera1} and \ref{prop:disting}. It remains then to understand how the spatial completeness and the causal boundedness are related with $d_{F}$ and $d_s$. In order to achieve this, let us recall the following property:

\begin{lemma}\label{lem:Harris1}
  Let $\pi:M\rightarrow S$ be the projection to $S$. Take two points  $(\Omega_0,x_0),(\Omega_1,x_0)\in M$ on the spacetime. Then:

  \[
\pi(I^+((\Omega_0,x_0))\cap I^-((\Omega_1,x_0)))=B_s(x_0,\frac{\Omega_1-\Omega_0}{2})
    \]
\end{lemma}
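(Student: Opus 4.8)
The plan is to reduce this set-theoretic identity to a pointwise criterion by applying directly the characterization of the chronological relation in Proposition \ref{teo:caracrel}. A point $x\in S$ lies in the left-hand side precisely when there exists a time coordinate $t\in\R$ such that $(t,x)\in I^+((\Omega_0,x_0))\cap I^-((\Omega_1,x_0))$. First I would translate each of the two membership conditions into an inequality on $t$. By part $(iii)(a)$ of Proposition \ref{teo:caracrel}, the condition $(\Omega_0,x_0)\ll(t,x)$ is equivalent to $d_F(x_0,x)<t-\Omega_0$, and the condition $(t,x)\ll(\Omega_1,x_0)$ is equivalent to $d_F(x,x_0)<\Omega_1-t$. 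Hence $x$ belongs to the projected intersection if and only if there is some $t$ lying in the open interval $(\Omega_0+d_F(x_0,x),\,\Omega_1-d_F(x,x_0))$.

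The next step is purely arithmetic: such a $t$ exists if and only if that interval is non-empty, i.e. if and only if $\Omega_0+d_F(x_0,x)<\Omega_1-d_F(x,x_0)$. Rearranging gives $d_F(x_0,x)+d_F(x,x_0)<\Omega_1-\Omega_0$, and dividing by $2$ while recalling the definition \eqref{symm} of the symmetrized pre-distance yields
\[
d_s(x_0,x)<\frac{\Omega_1-\Omega_0}{2},
\]
which is exactly the defining condition for $x\in B_s\!\left(x_0,\tfrac{\Omega_1-\Omega_0}{2}\right)$ (using that $d_s$ is symmetric, so there is no forward/backward ambiguity). This proves the two inclusions simultaneously, since every step is an equivalence.

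There is no genuinely hard step here; the only point requiring care is the bookkeeping with the value $-\infty$. If the associated \som-spacetime is totally vicious, then $d_F\equiv-\infty$ by part $(iii)$ of Proposition \ref{prop:continuidad}, so the interval above is all of $\R$ and the projection is the whole of $S$; simultaneously $d_s\equiv-\infty$, so the symmetrized ball is also all of $S$, and both sides agree. In the remaining case $d_F$ is finite and continuous (Proposition \ref{prop:continuidad}), so the inequalities manipulated above are between real numbers and the equivalences hold without any convention issues. I would therefore split the argument into these two cases only at the very end, keeping the main computation uniform.
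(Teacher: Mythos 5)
Your proof is correct and follows essentially the same route as the paper: both directions reduce to the characterization of $\ll$ in Proposition \ref{teo:caracrel}, and the existence of a suitable $t$ is exactly the non-emptiness of the interval $(\Omega_0+d_F(x_0,x),\,\Omega_1-d_F(x,x_0))$, which is equivalent to $d_s(x_0,x)<\tfrac{\Omega_1-\Omega_0}{2}$. Your explicit handling of the $d_F\equiv-\infty$ case is a small refinement the paper leaves implicit, but the argument is the same.
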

\begin{proof}
  Let us start by showing that the  subset to the left is included in the one to the right. For this, take $(t,x)\in I^+((\Omega_0,x_0))\cap I^-((\Omega_1,x_0))$. From Prop. \ref{teo:caracrel}, it follows that

  \begin{equation}\label{eq2} 
    \left.\begin{array}{l}
      d_{F}(x,x_0)<\Omega_1-t\\ d_{F}(x_0,x)<t-\Omega_0
      \end{array}\right\}\Rightarrow d_{s}(x,x_0)<\frac{\Omega_1-\Omega_0}{2}.
  \end{equation}

  Therefore,  $B_s(x_0,(\Omega_1-\Omega_0)/2)\supset \pi(I^+((\Omega_0,x_0))\cap I^-((\Omega_1,x_0)))$. For the other inclusion,    take $x\in B_s(x_0,(\Omega_1-\Omega_0)/2)$ and observe that, then, there exists $t\in \R$ such that the inequalities on (\ref{eq2}) follows. Then, $(t,x)\in I^+((\Omega_0,x_0))\cap I^-((\Omega_1,x_0))$, and so, $B_s(x_0,(\Omega_1-\Omega_0)/2)\subset \pi(I^+((\Omega_0,x_0))\cap I^-((\Omega_1,x_0)))$ and the result follows.
\end{proof}

Previous characterization is key in order to relate causal boundedness and $d_s$. In fact, let us remark that causal boundedness can be defined by using only points $p,p'\in M$ with $\pi(p)=\pi(p')$. For a given $x_0\in S$ and $(\Omega',x_1)$ we can always find $\Omega_1$ such that $(\Omega',x_1)\ll (\Omega_1,x_0)$. Hence we have that

\[
I^+((\Omega_0,x_0))\cap I^-((\Omega',x_1))\subset I^+((\Omega_0,x_0))\cap I^-((\Omega_1,x_0)).
  \]

  In conclusion, it follows that:

  \begin{prop}
   Let $M$ be an \som-spacetime. Then $M$ is causally bounded if, and only if, the balls of  the symmetrized distance are bounded with the Riemannian metric $\tilde{h}$. 
 \end{prop}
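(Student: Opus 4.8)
The plan is to deduce both implications directly from Lemma~\ref{lem:Harris1}, which provides the dictionary
\[
\pi\big(I^+((\Omega_0,x_0))\cap I^-((\Omega_1,x_0))\big)=B_s\Big(x_0,\tfrac{\Omega_1-\Omega_0}{2}\Big),
\]
together with the reduction of causal boundedness to pairs of points lying over the same base point, which was already set up in the paragraph preceding the statement. In other words, the proposition is essentially the assembly of this identity with an inclusion that trades a general past cone for the past of a point over $x_0$.

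For the implication ``causally bounded $\Rightarrow$ balls bounded'', I would fix $x_0\in S$ and a radius $r>0$, and choose reals $\Omega_0<\Omega_1$ with $\tfrac{\Omega_1-\Omega_0}{2}=r$. Applying the definition of causal boundedness to the pair $p=(\Omega_0,x_0)$ and $p'=(\Omega_1,x_0)$ gives that $\pi(I^+(p)\cap I^-(p'))$ is $\tilde h$-bounded; by Lemma~\ref{lem:Harris1} this set is exactly $B_s(x_0,r)$, so every symmetrized ball is $\tilde h$-bounded.

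For the converse, I would take arbitrary points $p=(\Omega_0,x_0)$ and $p'=(\Omega',x_1)$ and reduce to the same-base-point situation. Since $d_F$ never attains $+\infty$ (it is an infimum of lengths, hence bounded above by the length of any connecting curve), Proposition~\ref{teo:caracrel} guarantees the existence of $\Omega_1\in\R$ with $(\Omega',x_1)\ll(\Omega_1,x_0)$: one may take any $\Omega_1>\Omega'+d_F(x_1,x_0)$ when $d_F(x_1,x_0)$ is finite, and any $\Omega_1$ at all when $d_F(x_1,x_0)=-\infty$. By transitivity of $\ll$ this yields $I^-(p')\subset I^-((\Omega_1,x_0))$, whence
\[
\pi\big(I^+(p)\cap I^-(p')\big)\subseteq \pi\big(I^+((\Omega_0,x_0))\cap I^-((\Omega_1,x_0))\big)=B_s\Big(x_0,\tfrac{\Omega_1-\Omega_0}{2}\Big).
\]
As the right-hand side is $\tilde h$-bounded by hypothesis and boundedness passes to subsets, $\pi(I^+(p)\cap I^-(p'))$ is $\tilde h$-bounded, which is exactly causal boundedness.

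I expect the only genuinely delicate point to be this last reduction: causal boundedness quantifies over all pairs $p,p'$, whereas Lemma~\ref{lem:Harris1} only speaks about diamonds whose two vertices project to the same point $x_0$. The trick is to enlarge $I^-(p')$ to the past of a suitable point over $x_0$, and this enlargement is always available precisely because $d_F<+\infty$. Consequently the argument is uniform across the causal ladder: even in the totally vicious case, where $d_s\equiv-\infty$ and every ball equals all of $S$, both sides of the equivalence merely assert that $S$ itself is $\tilde h$-bounded, and the same proof goes through without change.
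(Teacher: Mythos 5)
Your proposal is correct and follows essentially the same route as the paper: the paper's argument is precisely the reduction of causal boundedness to pairs of points over the same base point via the inclusion $I^-((\Omega',x_1))\subset I^-((\Omega_1,x_0))$ for a suitable $\Omega_1$, combined with the identification of the projected causal diamond with a symmetrized ball from Lemma~\ref{lem:Harris1}. You merely spell out the details the paper leaves implicit, in particular the justification that $\Omega_1$ exists because $d_F$ never takes the value $+\infty$.
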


 It is remarkable, and particularly useful {\em a posteriori}, to recall that the balls of  the Riemannian metric $\tilde{h}$ are always bounded for the symmetrized distance. In fact,

 \begin{lemma}\label{lem:harris2}
   For any $x_0\in S$ and $r>0$ it follows that:

   \[
B_{\tilde{h}}(x_0,r)\subset B_{s}(x_0,r)
     \]
 \end{lemma}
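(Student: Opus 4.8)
The plan is to combine the explicit pre-Randers expression \eqref{fermatpre} for $F$ with the elementary observation that reversing the orientation of a curve changes only the sign of the one-form contribution to its $F$-length, while leaving the Riemannian length $L$ (computed with $\tilde h$) untouched.

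First I would record the length decomposition. Since $F(v)=\sqrt{\tilde h(v,v)}-\tilde\omega(v)$ by \eqref{fermatpre}, for any piecewise smooth curve $c$ one has $\ell_F(c)=L(c)-\int_c\tilde\omega$, where $L(c)$ denotes the $\tilde h$-length. If $\bar c$ is the same curve traversed in the opposite direction, then $L(\bar c)=L(c)$ while $\int_{\bar c}\tilde\omega=-\int_c\tilde\omega$, so that $\ell_F(\bar c)=L(c)+\int_c\tilde\omega$. Adding the two identities yields the key relation $\ell_F(c)+\ell_F(\bar c)=2L(c)$, in which the one-form has dropped out entirely.

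Next, fix $x\in B_{\tilde h}(x_0,r)$, i.e. $d_{\tilde h}(x_0,x)<r$. By the definition of the Riemannian distance there is a curve $c$ from $x_0$ to $x$ with $L(c)<r$. Since $c$ joins $x_0$ to $x$ and $\bar c$ joins $x$ to $x_0$, the definition of $d_F$ as an infimum of $F$-lengths gives $d_F(x_0,x)\le\ell_F(c)$ and $d_F(x,x_0)\le\ell_F(\bar c)$. Combining these with the reversal relation we obtain $d_F(x_0,x)+d_F(x,x_0)\le\ell_F(c)+\ell_F(\bar c)=2L(c)<2r$, and dividing by $2$ gives $d_s(x_0,x)<r$. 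Hence $x\in B_{s}(x_0,r)$, which is the desired inclusion.

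There is essentially no serious obstacle here: the whole argument reduces to the reversal identity $\ell_F(c)+\ell_F(\bar c)=2L(c)$ together with the monotonicity of the infimum defining $d_F$. The only point demanding a little care is that the relevant distances are infima, so one must pass to an actual (or suitably near-minimizing) curve realizing $L(c)<r$; such a curve is available precisely because the inequality $d_{\tilde h}(x_0,x)<r$ is strict, and no completeness or convexity hypothesis on $(S,\tilde h)$ is needed.
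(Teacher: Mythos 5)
Your proof is correct and follows essentially the same route as the paper: both rest on the reversal identity $\ell_F(c)+\ell_F(\bar c)=2L(c)$ (the paper phrases it as the loop $c\star(-c)$ having $F$-length $2L(c)$) and then bound $2d_s(x_0,x)\le \ell_F(c)+\ell_F(\bar c)$ by the infimum definition of $d_F$.
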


 \begin{proof}
   Consider $x_0,x_1\in S$ two arbitrary points. Let $c:[0,1]\rightarrow S$ be any curve joining $x_0$ to $x_1$ and define $(-c):[0,1]\rightarrow S$ as $(-c)(t)=c(1-t)$. It follows directly from definition that the concatenation $\gamma$ of both curves $c$ and $(-c)$ gives a loop of base $x_0$, passing through $x_1$ and such that:

   \[
\ell_{F}(\gamma)=2 L(c)
\]
Hence $d_{s}(x_0,x_1)\leq \frac{\ell_{F}(\gamma)}{2}=L(c)$. As $c$ is an arbitrary curve from $x_0$ to $x_1$, it follows then that $d_{s}(x_0,x_1)\leq d_{\tilde{h}}(x_0,x_1)$, therefore $B_{\tilde{h}}(x_0,r)\subset B_{s}(x_0,r)$ as desired.

 \end{proof}

 In conclusion, and looking to Proposition \ref{prop:harris1}, we have that: (a) the condition of future-distinguishing parallels the condition of that $d_s$ is  a (truly) distance (recall Prop. \ref{prop:disting}) and (b) both conditions causally bounded and spatially complete imply the pre-compactness of the symmetrized balls. In fact, we can prove:

 \begin{prop}
   Let $M$ be an \som-spacetime.  $M$ is both causally bounded and spatially complete if and only if the symmetrized balls are pre-compact. 
 \end{prop}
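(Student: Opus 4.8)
The plan is to derive both implications from the Hopf–Rinow theorem applied to the honest Riemannian manifold $(S,\tilde h)$, bridged to the symmetrized balls $B_s(x_0,r)$ through Lemma~\ref{lem:harris2} and the preceding proposition, which identifies causal boundedness of $M$ with the $\tilde h$-boundedness of the symmetrized balls. Since spatial completeness means exactly that $(S,\tilde h)$ is complete, the whole argument reduces to passing back and forth between the notions ``$\tilde h$-bounded'', ``pre-compact'' and ``compact closure'' for subsets of $S$.

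For the direct implication, I would assume $M$ causally bounded and spatially complete. Causal boundedness, via the preceding proposition, gives that each $B_s(x_0,r)$ is bounded for $\tilde h$, while spatial completeness says that $(S,\tilde h)$ is complete. Since in a complete Riemannian manifold every closed and $\tilde h$-bounded subset is compact (Hopf–Rinow), the closure $\overline{B_s(x_0,r)}$ is compact, that is, $B_s(x_0,r)$ is pre-compact.

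For the converse, I would assume the symmetrized balls pre-compact. Then each $B_s(x_0,r)$ has compact, hence $\tilde h$-bounded, closure, so the symmetrized balls are $\tilde h$-bounded and $M$ is causally bounded by the preceding proposition. For spatial completeness I would invoke Lemma~\ref{lem:harris2}, which yields $B_{\tilde h}(x_0,r)\subseteq B_s(x_0,r)$; a subset of a pre-compact set is pre-compact, so every $\tilde h$-ball is pre-compact. Consequently every closed and $\tilde h$-bounded subset of $S$ is compact, and Hopf–Rinow gives that $(S,\tilde h)$ is complete, i.e. $M$ is spatially complete.

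The argument is essentially bookkeeping between these metric notions, so I do not expect a genuine obstacle; the only delicate point is that the statement concerns the balls of $d_s$, which is merely symmetric, is a priori not a true distance, and may even fail to generate the manifold topology, as the example in \S\ref{almoconf} shows, whereas Hopf–Rinow is applied to the genuine Riemannian distance $d_{\tilde h}$. The two worlds are linked precisely by the inclusion in Lemma~\ref{lem:harris2} together with the equivalence of causal boundedness and $\tilde h$-boundedness, so the care to be taken is to phrase all the compactness and boundedness statements relative to the $\tilde h$-topology throughout.
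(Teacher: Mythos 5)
Your proposal is correct and follows essentially the same route as the paper: both directions hinge on the inclusion $B_{\tilde h}(x_0,r)\subseteq B_s(x_0,r)$ of Lemma~\ref{lem:harris2}, on the identification of causal boundedness with $\tilde h$-boundedness of the symmetrized balls coming from Lemma~\ref{lem:Harris1}, and on the compactness of closed $\tilde h$-bounded sets in the complete manifold $(S,\tilde h)$. The only cosmetic difference is that the paper verifies spatial completeness directly on Cauchy sequences rather than quoting Hopf--Rinow, but the content is identical.
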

 \begin{proof}
   For the right implication, take the closure of any symmetrized ball $\overline{B}_s(x_0,r)$. From the causal boundedness  and Lemma \ref{lem:Harris1},  there exists $R>0$ such that $\overline{B}_s(x_0,r)\subset \overline{B}_{\tilde{h}}(x_0,R)$, being the latter compact from the spatial completeness. Then, as $\overline{B}_s(x_0,r)$ is closed, it is also compact.

   For the left implication, let us begin by proving that $M$ is spatially complete. For this, recall that any Cauchy sequence $\{x_n\}$ for $\tilde{h}$ will be contained in a ball $B_{\tilde{h}}(x_0,r)$ for some $x_0\in S$ and $r>0$ big enough. From Lemma~\ref{lem:harris2}, it follows that the previous ball is contained in a symmetrized ball, which is pre-compact by hypothesis. Then, the sequence $\{x_n\}$ is convergent. Finally, for the causal boundedness, recall that, by hypothesis, $\overline{B}_s(x_0,r)$ is compact for  every  $x_0$ and $r>0$, and so, bounded for any Riemannian metric considered (in particular for $\tilde{h}$). Hence, the causal boundedness follows from Lemma~\ref{lem:Harris1}. 
 \end{proof}
\section*{Acknowledgments}
 The authors warmly acknowledge Professor Marco Mazzucchelli (\'Ecole Normale Sup\'erieure de Lyon) for very helpful discussions and  comments on a preliminary version of the article. 


\begin{thebibliography}{10}
 	
 	\bibitem{Abb13}
 	{\sc A.~Abbondandolo}, {\em Lectures on the free period lagrangian action
 		functional}, J. Fixed Point Theory Appl., 13 (2013), pp.~397--430.
 	
 	\bibitem{AsMa17}
 	{\sc L.~Asselle and M.~Mazzucchelli}, {\em On the existence of infinitely many
 		closed geodesics on non-compact manifolds}, Proc. Amer. Math. Soc., 145
 	(2017), pp.~2689--2697.
 	
 	\bibitem{beem1996global}
 	{\sc J.~Beem, P.~Ehrlich, and K.~Easley}, {\em Global Lorentzian Geometry,
 		Second Edition}, Chapman \& Hall/CRC Pure and Applied Mathematics, Taylor \&
 	Francis, 1996.
 	
 	\bibitem{BK98}
 	{\sc J.~K. Beem and A.~Kr\'olak}, {\em Cauchy horizon end points and
 		differentiability}, J. Math. Phys., 39 (1998), pp.~6001--6010.
 	
 	\bibitem{BiJav11}
 	{\sc L.~Biliotti and M.~A. Javaloyes}, {\em {$t$}-periodic light rays in
 		conformally stationary spacetimes via {F}insler geometry}, Houston J. Math.,
 	37 (2011), pp.~127--146.
 	
 	\bibitem{CJM11}
 	{\sc E.~Caponio, M.~{\'A}. Javaloyes, and A.~Masiello}, {\em On the energy
 		functional on {F}insler manifolds and applications to stationary spacetimes},
 	Math. Ann., 351 (2011), pp.~365--392.
 	
 	\bibitem{CJS11}
 	{\sc E.~Caponio, M.~A. Javaloyes, and M.~S{\'a}nchez}, {\em On the interplay
 		between {L}orentzian causality and {F}insler metrics of {R}anders type}, Rev.
 	Mat. Iberoam., 27 (2011), pp.~919--952.
 	
 	\bibitem{CFGH02}
 	{\sc P.~T. Chru\'sciel, J.~H.~G. Fu, G.~J. Galloway, and R.~Howard}, {\em On
 		fine differentiability properties of horizons and applications to
 		{R}iemannian geometry}, J. Geom. Phys., 41 (2002), pp.~1--12.
 	
 	\bibitem{CG98}
 	{\sc P.~T. Chru\'sciel and G.~J. Galloway}, {\em Horizons non-differentiable on
 		a dense set}, Comm. Math. Phys., 193 (1998), pp.~449--470.
 	
 	\bibitem{CIPP98}
 	{\sc G.~Contreras, R.~Iturriaga, G.~P. Paternain, and M.~Paternain}, {\em
 		Lagrangian graphs, minimizing measures and {M}a\~ne's critical values}, GAFA,
 	8 (1998), pp.~788--809.
 	
 	\bibitem{FHSBuseman}
 	{\sc J.~L. Flores, J.~Herrera, and M.~S\'anchez}, {\em {G}romov, {C}auchy and
 		causal boundaries for {R}iemannian, {F}inslerian and {L}orentzian manifolds},
 	Mem. Amer. Math. Soc., 226 (2013).
 	
 	\bibitem{Harris92}
 	{\sc S.~Harris}, {\em Conformally stationary spacetimes}, Classical Quantum
 	Gravity, 9 (1992), pp.~1823--1827.
 	
 	\bibitem{Harris15}
 	{\sc S.~Harris}, {\em Static- and stationary-complete spacetimes: algebraic and
 		causal structures}, Classical and Quantum Gravity, 32 (2015), p.~135026.
 	
 	\bibitem{Jav13}
 	{\sc M.~A. Javaloyes}, {\em Conformally standard stationary spacetimes and
 		{F}ermat metrics}, in Recent trends in {L}orentzian geometry, vol.~26 of
 	Springer Proc. Math. Stat., Springer, New York, 2013, pp.~207--230.
 	
 	\bibitem{JLP15}
 	{\sc M.~A. Javaloyes, L.~Lichtenfelz, and P.~Piccione}, {\em Almost isometries
 		of non-reversible metrics with applications to stationary spacetimes}, J.
 	Geom. Phys., 89 (2015), pp.~38--49.
 	
 	\bibitem{JS08}
 	{\sc M.~A. Javaloyes and M.~S{\'a}nchez}, {\em A note on the existence of
 		standard splittings for conformally stationary spacetimes}, Classical Quantum
 	Gravity, 25 (2008), pp.~168001, 7.
 	
 	\bibitem{JaSa14}
 	\leavevmode\vrule height 2pt depth -1.6pt width 23pt, {\em On the definition
 		and examples of {F}insler metrics}, Ann. Sc. Norm. Super. Pisa Cl. Sci. (5),
 	13 (2014), pp.~813--858.
 	
 	\bibitem{FetLyu51}
 	{\sc L.~A. Lyusternik and A.~I. Fet}, {\em Variational problems on closed
 		manifolds}, Doklady Akad. Nauk SSSR (N.S.), 81 (1951), pp.~17--18.
 	
 	\bibitem{Mane97}
 	{\sc R.~Ma\~n\'e}, {\em Lagrangian flows: the dynamics of globally minimizing
 		orbits}, Bol. Soc. Brasil. Mat. (N.S.), 28 (1997), pp.~141--153.
 	
 	\bibitem{Man14}
 	{\sc J.~M. Manzano}, {\em On the classification of {K}illing submersions and
 		their isometries}, Pacific J. Math., 270 (2014), pp.~367--392.
 	
 	\bibitem{Mer77}
 	{\sc F.~Mercuri}, {\em The critical points theory for the closed geodesics
 		problem}, Math. Z., 156 (1977), pp.~231--245.
 	
 	\bibitem{Oneill83}
 	{\sc B.~O'Neill}, {\em Semi-{R}iemannian geometry}, vol.~103 of Pure and
 	Applied Mathematics, Academic Press, Inc. [Harcourt Brace Jovanovich,
 	Publishers], New York, 1983.
 	\newblock With applications to relativity.
 	
 	\bibitem{Palais61}
 	{\sc R.~S. Palais}, {\em On the existence of slices for actions of non-compact
 		{L}ie groups}, Ann. of Math. (2), 73 (1961), pp.~295--323.
 	
 	\bibitem{PaPa97}
 	{\sc G.~P. Paternain and M.~Paternain}, {\em Critical values of autonomous
 		{L}agrangian systems}, Comment. Math. Helv., 72 (1997), pp.~481--499.
 	
 	\bibitem{Perlick90}
 	{\sc V.~Perlick}, {\em On {F}ermat's principle in general relativity. {I}.\
 		{T}he general case}, Classical Quantum Gravity, 7 (1990), pp.~1319--1331.
 	
 	\bibitem{SRK14}
 	{\sc J.~Szilasi, R.~L. Lovas, and D.~C. Kert{\'e}sz}, {\em Connections, sprays
 		and {F}insler structures}, World Scientific Publishing Co. Pte. Ltd.,
 	Hackensack, NJ, 2014.
 	
 \end{thebibliography}
 \end{document}